\documentclass[final,onefignum,onetabnum,hidelinks]{siamart251216}

\usepackage{amsfonts}
\usepackage{amsmath}
\usepackage{amssymb}
\usepackage{graphicx}
\usepackage{epstopdf}
\usepackage{subfigure}
\usepackage{booktabs}
\usepackage{multirow}
\usepackage{tabularx}
\usepackage{makecell}
\usepackage{algorithm}
\usepackage{algpseudocode}
\usepackage{tikz}

\ifpdf
  \DeclareGraphicsExtensions{.eps,.pdf,.png,.jpg}
\else
  \DeclareGraphicsExtensions{.eps}
\fi

\renewcommand{\vec}[1]{\boldsymbol{#1}}

\newsiamremark{remark}{Remark}
\newsiamremark{example}{Example}
\newsiamremark{hypothesis}{Hypothesis}
\crefname{hypothesis}{Hypothesis}{Hypotheses}
\newsiamthm{claim}{Claim}

\newcommand{\va}{{\vec a}}
\newcommand{\rr}{{\bf r}}

\newcommand{\bX}{\bar{\mathcal{X}}}

\usepackage{amsopn}

\headers{TT-Format LRMC via Riemannian Optimization}{Z. Hu, Z. Chen, D. Sun, and L. Zhang}

\title{Low-Rank Tensor Completion using Tensor Train Decomposition via Riemannian Optimization on the Quotient Geometry \thanks{Submitted to the editors DATE.
\funding{The work of Liping Zhang is supported by the National Natural Science Foundation of China (Grant No. 12571323). Defeng Sun's research is supported by the Hong Kong RGC Senior Research Fellow Scheme [No. SRFS22235S02] and the GRF Grant 15307822.}}}

\author{Zhenlong Hu\thanks{Department of Mathematical Sciences, Tsinghua University, Beijing 100084, China.}
\and Zhongming Chen\thanks{Department of Mathematics, Hangzhou Dianzi University, Hangzhou 310018, China.}
\and Defeng Sun\thanks{Department of Applied Mathematics, The Hong Kong Polytechnic University, Hung Hom, Kowloon, Hong Kong (\email{defeng.sun@polyu.edu.hk}). }
\and Liping Zhang\thanks{Corresponding author. Department of Mathematical Sciences, Tsinghua University, Beijing 100084, China (\email{lipingzhang@tsinghua.edu.cn}).}
}

\begin{document}

\maketitle

\begin{abstract}
Owing to the effectiveness of Tensor Train (TT) decomposition in managing high-order tensors, low-rank tensor completion within the TT-format has emerged as a prominent research focus. In this paper, we leverage the left-orthogonal property of the TT-decomposition to construct a novel quotient manifold and introduce a family of admissible Riemannian metrics. Within this geometric framework, we propose a new approach to constructing retractions compatible with the quotient structure, realized via two novel retractions based on recursive polar and QR decompositions that respect the recursive orthogonalization structure of the TT format. We then derive Riemannian gradient descent and conjugate gradient methods to solve the tensor completion problem. Theoretically, our approach streamlines the horizontal projection by reducing the number of unknowns per block from a quadratic dependence on the TT-ranks to a near-half scaling, thereby enhancing computational efficiency over conventional quotient-based methods. Numerical experiments demonstrate that the proposed algorithms achieve reconstruction accuracy comparable to state-of-the-art TT-based geometric methods.
\end{abstract}

\begin{keywords}
tensor completion, tensor train decomposition, Riemannian optimization, Riemannian metric, retraction, quotient manifold
\end{keywords}

\begin{MSCcodes}
90C30, 65K05, 15A69, 65F99
\end{MSCcodes}

\section{Introduction}\label{sec:introduction}
Tensors serve as multidimensional generalizations of matrices, providing a powerful framework for modeling high-dimensional data in fields such as data mining \cite{acar2005modeling} and computer vision \cite{vasilescu2002multilinear}. A fundamental computational challenge in this context is tensor completion, which seeks to reconstruct the missing entries of a partially observed tensor. To render this ill-posed problem tractable, low-rank models based on various tensor decompositions are widely employed. Among these, the Tensor Train (TT) decomposition \cite{oseledets2011tensor} offers an exceptionally parameter-efficient representation for high-order tensors. By ensuring that storage costs scale only linearly with the tensor order, the TT-format effectively circumvents the curse of dimensionality. This paper addresses the tensor completion problem under the assumption of a low TT-rank, formulated as the following optimization problem:
\begin{equation}\label{eqttrank}
\min_{\mathrm{rank_{TT}}{\mathcal{X}} =\rr} \; F(\mathcal{X}) = \frac{1}{2} \bigl\| \mathcal{P}_\Omega\bigl(\mathcal{X}\bigr) - \mathcal{P}_\Omega(\Gamma) \bigr\|_F^2,
\end{equation}
where $\Gamma \in \mathbb{R}^{n_1 \times \cdots \times n_d}$
is the partially observed target tensor, $\Omega$ denotes the index set of observed entries, and the sampling operator $\mathcal{P}_\Omega$ is defined as:
\[
\mathcal{P}_\Omega(\mathcal{X})(i_1,\dots,i_d) =
\begin{cases}
\mathcal{X}(i_1,\dots,i_d), & \text{if } (i_1,\dots,i_d) \in \Omega, \\[2pt]
0, & \text{otherwise}.
\end{cases}
\]
For second-order tensors, the problem simplifies to the well-studied matrix completion task \cite{candes2012exact,keshavan2009matrix}. Consequently, extending completion techniques from matrices to higher-order tensors represents a natural research trajectory \cite{liu2012tensor,zhang2016exact}. Recently, manifold optimization has emerged as a robust alternative framework for constrained optimization. Compared to conventional methods, manifold-based algorithms often exhibit superior numerical properties \cite{absil2008optimization}, and their application to tensor completion has garnered significant attention.

Existing literature on manifold-based tensor completion primarily employs two distinct geometric strategies. One approach is to embed the set of fixed-rank tensors directly into a Euclidean space, an intuition applied to matrices \cite{vandereycken2013low}, Tucker tensors \cite{kressner2014low}, and TT tensors \cite{holtz2012manifolds,steinlechner2016riemannian}. To handle the gauge symmetries inherent in tensor decompositions, a second strategy models the factors as a product manifold and identifies equivalent classes via quotienting. This framework has gained traction for various formats, including matrices \cite{tong2021accelerating,wei2016guarantees}, Tucker \cite{kasai2016low}, and TT tensors \cite{cai2022tensor}. There also exist methods that operate directly on the product manifold without quotienting, employing a preconditioned metric \cite{gao2024riemannian,gao2025optimization}. 

The choice of Riemannian metric critically affects the convergence behavior of optimization algorithms \cite{luo2024geometric,mishra2016riemannian,zheng2025riemannian}. In quotient geometry, however, the requirement that the action of the quotient group be isometric imposes a strong restriction on admissible metrics: the metric on the total space must be preserved under the group action \cite{absil2008optimization,boumal2023introduction}. When this condition holds, the quotient map becomes an isometric submersion, which induces a well-defined Riemannian metric on the quotient manifold \cite{absil2014two,dong2022analysis,luo2024geometric}. In all existing quotient-based works on tensor completion, including those for matrices \cite{mishra2012riemannian,mishra2014r3mc}, Tucker tensors \cite{kasai2016low}, and TT tensors \cite{cai2022tensor}, this is achieved by adopting a single, complex Riemannian metric that couples all factor cores, which we refer to as {\bf full metric}. As a result, operations such as the Riemannian gradient or the tangent projection become expensive because they inevitably involve all cores.

Yet, the geometric structure of a quotient manifold is intrinsically linked to the choice of the equivalence group; different choices can lead to distinct invariance conditions and thus affect the flexibility in selecting Riemannian metrics. However, regardless of whether the quotient group is the general linear group $\mathrm{GL}(r_k)$ (as in \cite{cai2022tensor,mishra2012riemannian}) or the orthogonal group $\mathrm{O}(r_k)$ (as in \cite{kasai2016low,mishra2014r3mc}), all existing works have relied on the full metric. The present work is the first to systematically construct a family of admissible metrics on an orthogonal quotient manifold.

Beyond the choice of metric, retractions on a quotient manifold must also be compatible with the equivalence relation to descend to the quotient, which is a property that standard product-manifold constructions cannot always guarantee. Our approach overcomes this challenge through a key insight: if a retraction preserves the tensor formed by the direct sum of the base point and the tangent vector, the quotient compatibility condition is automatically satisfied, which converts the problem into the simpler task of constructing such a retraction. We realize this by adapting the recursive left-orthogonalization procedure of the TT format.

In this work, we consider the left-orthogonal TT format, where all but the last TT core satisfy an orthogonality condition; specifically, the left-unfolding matrices $L(\mathcal{X}_k)$ are orthogonal for $k=1,\dots,d-1$. This is a well-established representation widely used in practice \cite{cai2022provable,holtz2012manifolds,lubich2015time,steinlechner2016riemannian}. Building on this format, we construct a quotient manifold by identifying left-orthogonal TT cores under the action of the orthogonal group. In this framework, the horizontal projection requires solving a linear system derived from coupled Lyapunov equations, and we show that the resulting coefficient matrix is block-tridiagonal. This formulation offers a more efficient geometric framework, particularly for tensors with large ranks. Crucially, it allows us to construct a family of $d!$ admissible Riemannian metrics that satisfy the isometric condition without being restricted to the full metric. To our knowledge, this is the first systematic construction of such a metric family on an orthogonal quotient manifold for TT tensors. Leveraging the proposed metrics and retractions, we implement first-order Riemannian optimization algorithms, including gradient descent and conjugate gradient methods. Numerical experiments validate our approach, demonstrating performance comparable to state-of-the-art TT-based manifold optimization methods while highlighting the practical advantages of selecting different metrics from our proposed family.

The main contributions are summarized as follows:
\begin{itemize}
    \item We construct a novel quotient manifold for the left-orthogonal TT format by employing an orthogonal group action. This formulation reduces the horizontal projection to a block-tridiagonal linear system derived from coupled Lyapunov equations. By exploiting skew-symmetry, we reduce the system size from  $\sum_{k=1}^{d-1} r_k^2$ \cite{cai2022tensor} to $\sum_{k=1}^{d-1} r_k(r_k-1)/2$. This significantly lowers the computational complexity of characterizing the manifold's tangent space, particularly for high-rank tensors, while maintaining a rigorous geometric structure.

  \item We introduce the first family of $d!$ admissible Riemannian metrics for TT quotient manifolds. By combining the left-orthogonal structure with an orthogonal group action, these metrics naturally satisfy the isometry condition. This framework offers a tunable trade-off between computational cost and geometric fidelity, ranging from the inexpensive Euclidean metric to the information-rich full metric. This flexibility allows for optimizing convergence behavior relative to available computational resources, unexplored in previous tensor optimization literature.

\item We introduce a new approach to constructing retractions compatible with the quotient structure on the TT tensor manifold, realized via two novel retractions based on recursive polar and QR decompositions that respect the recursive orthogonalization structure of the TT format.
Combined with the proposed family of metrics, these tools enable the implementation of first-order Riemannian algorithms, including RGD and RCG, on the quotient manifold.
Numerical experiments demonstrate that our approach achieves reconstruction accuracy comparable to state-of-the-art embedded and quotient methods, while highlighting the practical advantages of selecting different metrics from the proposed family to balance computational efficiency and convergence speed.

\end{itemize}

The remainder of the paper is organized as follows. Section~\ref{sec:preliminaries} presents necessary preliminaries. Section~\ref{sec:quotient} details the construction of the proposed quotient manifold. Section~\ref{sec:algorithms} applies Riemannian gradient descent and conjugate gradient methods on this manifold. Section~\ref{sec:experiments} presents numerical experiments, including an analysis of algorithmic components and a comparison with existing methods. Section~\ref{sec:conclusion} concludes the paper.

\section{Notation and Preliminaries}
\label{sec:preliminaries}

\subsection{Notation}
Throughout this paper, scalars are denoted by lowercase letters (e.g., $x$), vectors by bold lowercase letters (e.g., $\mathbf{x}$), matrices by capital letters (e.g., $X$), and tensors of order three or higher by calligraphic letters (e.g., $\mathcal{X}$).
For a positive integer $n$, we write $[n] = \{1, 2, \dots, n\}$.
For a matrix $A \in \mathbb{R}^{m \times n}$, its $i$th row and $j$th column are denoted by $A(i,:)$ and $A(:,j)$, respectively.
The Kronecker product is denoted by $\otimes$, and the $n \times n$ identity matrix by $I_n$. The zero matrix of size $m \times n$ is denoted by $0_{m \times n}$; for a square matrix, we write $0_n$.
The sets of $n \times n$ symmetric and skew-symmetric matrices are denoted by $\operatorname{Sym}(n)$ and $\operatorname{Skew}(n)$, respectively. We denote by $\mathrm{GL}(r)$ the general linear group of $r \times r$ invertible matrices, and by $\mathrm{O}(r)$ the orthogonal group of $r \times r$ matrices satisfying $Q^\top Q = I_r$.

We adopt a multi-index notation: for indices $i_j \in [n_j]$ ($j \in [d]$), the integer $\overline{i_1 i_2 \cdots i_d}$ in $\prod_{k=1}^d n_k$ is defined as
\[
\overline{i_1 i_2 \cdots i_d} = i_1 + (i_2 - 1)n_1 + \cdots + (i_d - 1)n_1 \cdots n_{d-1}.
\]

For a third-order tensor $\mathcal{A} \in \mathbb{R}^{n_1 \times n_2 \times n_3}$, its $i$th slice is denoted by $\mathcal{A}(i) \in \mathbb{R}^{n_1 \times n_3}$.
The left unfolding $L(\mathcal{A}) \in \mathbb{R}^{n_1 n_2 \times n_3}$ and the right unfolding $R(\mathcal{A}) \in \mathbb{R}^{n_1 \times n_2 n_3}$ are defined by
\[
L(\mathcal{A})(\overline{i_1 i_2}, i_3) = \mathcal{A}(i_1, i_2, i_3), \quad
R(\mathcal{A})(i_1, \overline{i_2 i_3}) = \mathcal{A}(i_1, i_2, i_3).
\]

For a $d$th-order tensor $\mathcal{A} \in \mathbb{R}^{n_1 \times n_2 \times \cdots \times n_d}$, its mode-$k$ matricization $\mathcal{A}_{(k)} \in \mathbb{R}^{n_k \times \prod_{i \neq k} n_i}$ and its $k$th unfolding matrix $\mathcal{A}_{\langle k \rangle} \in \mathbb{R}^{(n_1 \cdots n_k) \times (n_{k+1} \cdots n_d)}$ are given for $k \in [d]$ by
\begin{align*}
\mathcal{A}_{(k)}\bigl(i_k, \overline{i_1 \cdots i_{k-1} i_{k+1} \cdots i_d}\bigr) &= \mathcal{A}(i_1, i_2, \ldots, i_d), \\
\mathcal{A}_{\langle k \rangle}\bigl(\overline{i_1 \cdots i_k}, \overline{i_{k+1} \cdots i_d}\bigr) &= \mathcal{A}(i_1, i_2, \ldots, i_d).
\end{align*}

The Frobenius norm of $\mathcal{A}$ is
\[
\|\mathcal{A}\|_F = \sqrt{\sum_{i_1=1}^{n_1} \sum_{i_2=1}^{n_2} \cdots \sum_{i_d=1}^{n_d} \mathcal{A}(i_1, i_2, \ldots, i_d)^2}.
\]

For a tensor $\mathcal{X} \in \mathbb{R}^{n_1 \times \cdots \times n_d}$ and a matrix $A \in \mathbb{R}^{m \times n_k}$, the $k$-mode product $\mathcal{X} \times_k A$ yields a tensor in $\mathbb{R}^{n_1 \times \cdots \times n_{k-1} \times m \times n_{k+1} \times \cdots \times n_d}$ with entries
\[
(\mathcal{X} \times_k A)(i_1, \ldots, i_{k-1}, j, i_{k+1}, \ldots, i_d) = \sum_{i_k=1}^{n_k} \mathcal{X}(i_1, \ldots, i_{k-1}, i_k, i_{k+1}, \ldots, i_d) \, A(j, i_k).
\]

From these definitions, one obtains the following relations for any $\mathcal{X} \in \mathbb{R}^{n_1 \times n_2 \times n_3}$, $P \in \mathbb{R}^{m_1 \times n_1}$, and $Q \in \mathbb{R}^{m_3 \times n_3}$:
\begin{align}
L(\mathcal{X} \times_1 P \times_3 Q) &= (I_{n_2} \otimes P) \, L(\mathcal{X}) \, Q^{\!\top}, \label{eq:Ltrans} \\
R(\mathcal{X} \times_1 P \times_3 Q) &= P \, R(\mathcal{X}) \, (Q \otimes I_{n_2})^{\!\top}. \label{eq:Rtrans}
\end{align}

\subsection{TT Decomposition}

The TT decomposition represents a higher-order tensor as a product of third-order tensors called TT cores. For any tensor $\mathcal{X} \in \mathbb{R}^{n_1 \times n_2 \times \cdots \times n_d}$, each element is expressed as
\begin{equation}\label{eq_ttd}
\mathcal{X}(i_1, i_2, \ldots, i_d) = \mathcal{X}_1(i_1) \mathcal{X}_2(i_2) \cdots \mathcal{X}_d(i_d),
\end{equation}
where each $\mathcal{X}_k \in \mathbb{R}^{r_{k-1} \times n_k \times r_k}$ for $k \in [d]$ is a TT core. When the context is clear, we also refer to them simply as cores or core tensors. To ensure the matrix product yields a scalar, we set $r_0 = r_d = 1$ \cite{oseledets2011tensor}.

The TT-rank of $\mathcal{X}$ is defined as the smallest tuple $\rr = (r_0, r_1, \ldots, r_d)$ for which a TT decomposition exists with cores of dimensions $r_{k-1} \times n_k \times r_k$, and is denoted by $\operatorname{rank}_{\mathrm{TT}}(\mathcal{X}) = \rr$.
If a tensor admits a decomposition as in \eqref{eq_ttd}, then necessarily $r_k \ge \operatorname{rank}(\mathcal{X}_{\langle k \rangle})$ for $k \in [d-1]$, where $\mathcal{X}_{\langle k \rangle}$ is the $k$th unfolding matrix. Moreover, the SVD-based TT algorithm \cite{oseledets2011tensor} constructs a decomposition with $r_k = \operatorname{rank}(\mathcal{X}_{\langle k \rangle})$ for $k \in [d-1]$. Hence,
\[
\operatorname{rank}_{\mathrm{TT}}(\mathcal{X}) = \bigl(1, \operatorname{rank}(\mathcal{X}_{\langle 1 \rangle}), \ldots, \operatorname{rank}(\mathcal{X}_{\langle d-1 \rangle}), 1\bigr).
\]

The TT-rank also admits an equivalent characterization via the ranks of the core unfoldings \cite{holtz2012manifolds}:

\begin{proposition}\label{tt-rank}
Let $\mathcal{X} \in \mathbb{R}^{n_1 \times n_2 \times \cdots \times n_d}$ have a TT decomposition \eqref{eq_ttd} with cores $\mathcal{X}_k \in \mathbb{R}^{r_{k-1} \times n_k \times r_k}$ for $k \in [d]$. Then $\operatorname{rank}_{\mathrm{TT}}(\mathcal{X}) = (r_0, r_1, \ldots, r_d)$ if and only if $\operatorname{rank}\bigl(L(\mathcal{X}_k)\bigr) = r_k$ and $\operatorname{rank}\bigl(R(\mathcal{X}_k)\bigr) = r_{k-1}$ for every $k \in [d]$.
\end{proposition}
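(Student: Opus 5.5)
We prove the proposition by expressing each unfolding $\mathcal{X}_{\langle k\rangle}$ through the interface matrices of the TT decomposition. For $k\in[d-1]$ define the left interface $X_{\le k}\in\mathbb{R}^{(n_1\cdots n_k)\times r_k}$ by $X_{\le k}(\overline{i_1\cdots i_k},:) = \mathcal{X}_1(i_1)\cdots\mathcal{X}_k(i_k)$. Define the right interface $X_{\ge k+1}\in\mathbb{R}^{r_k\times (n_{k+1}\cdots n_d)}$ by $X_{\ge k+1}(:,\overline{i_{k+1}\cdots i_d}) = \mathcal{X}_{k+1}(i_{k+1})\cdots\mathcal{X}_d(i_d)$. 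Then \eqref{eq_ttd} gives $\mathcal{X}_{\langle k\rangle} = X_{\le k}X_{\ge k+1}$. The interfaces obey the recursions
\[
X_{\le k} = (I_{n_k}\otimes X_{\le k-1})\,L(\mathcal{X}_k), \qquad X_{\ge k} = R(\mathcal{X}_k)\,(X_{\ge k+1}\otimes I_{n_k})^{\!\top},
\]
up to the ordering convention of the Kronecker factors, which follows the multi-index convention and is checked entrywise as in \eqref{eq:Ltrans}--\eqref{eq:Rtrans}. The base cases are $X_{\le 1}=L(\mathcal{X}_1)$ and $X_{\ge d}=R(\mathcal{X}_d)$.

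For the ``if'' direction, assume $\operatorname{rank} L(\mathcal{X}_k)=r_k$ and $\operatorname{rank} R(\mathcal{X}_k)=r_{k-1}$ for all $k$. We show by induction on $k$ that $X_{\le k}$ has full column rank $r_k$. If $X_{\le k-1}$ has full column rank, so does $I_{n_k}\otimes X_{\le k-1}$. A product of two full-column-rank matrices has full column rank, so $X_{\le k}=(I_{n_k}\otimes X_{\le k-1})L(\mathcal{X}_k)$ does too. Symmetrically, a backward induction shows that $X_{\ge k+1}$ has full row rank $r_k$. Then $\operatorname{rank}\mathcal{X}_{\langle k\rangle}=\operatorname{rank}(X_{\le k}X_{\ge k+1})=r_k$, since left multiplication by an injective matrix and right multiplication by a surjective one preserve rank. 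By the characterization $\operatorname{rank}_{\mathrm{TT}}(\mathcal{X})=(1,\operatorname{rank}\mathcal{X}_{\langle 1\rangle},\dots,1)$ stated above, the TT-rank equals $\rr$.

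For the ``only if'' direction, assume $\operatorname{rank}\mathcal{X}_{\langle k\rangle}=r_k$ for all $k\in[d-1]$. From $\mathcal{X}_{\langle k\rangle}=X_{\le k}X_{\ge k+1}$ and the inner dimension $r_k$, both $X_{\le k}$ and $X_{\ge k+1}$ must have rank exactly $r_k$. The recursion then gives
\[
r_k=\operatorname{rank} X_{\le k}\le \operatorname{rank} L(\mathcal{X}_k)\le r_k,
\]
so $\operatorname{rank} L(\mathcal{X}_k)=r_k$ for $k\le d-1$. For $k=d$ we have $r_d=1$, and $L(\mathcal{X}_d)$ is a nonzero vector because $\mathcal{X}\neq 0$. (If $\mathcal{X}=0$ the TT-rank is degenerate; we assume $\mathcal{X}\neq 0$, as is implicit in $r_0=r_d=1$.) In the same way, $r_{k-1}=\operatorname{rank}X_{\ge k}\le\operatorname{rank}R(\mathcal{X}_k)\le r_{k-1}$ for $k\ge 2$, and $R(\mathcal{X}_1)$ is a nonzero row vector.

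The only real obstacle is bookkeeping. The Kronecker ordering in the interface recursions must match the multi-index convention $\overline{i_1\cdots i_k}$ with $i_1$ varying fastest. The edge cases $k=1$ and $k=d$, where $r_0=r_d=1$, and the nonzero-tensor assumption must also be handled. Beyond that, the argument uses only the elementary facts that full-rank factors preserve rank and that a rank is bounded by the inner dimension.
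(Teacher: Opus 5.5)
The paper does not prove this proposition; it quotes it from \cite{holtz2012manifolds}. Your argument is correct and is essentially the standard proof from that reference. It rests on the factorization $\mathcal{X}_{\langle k\rangle}=\mathcal{X}_{\le k}\mathcal{X}_{\ge k+1}^{\top}$ and the interface recursions \eqref{eq_inte} and \eqref{eq_mk}, which the paper records right afterwards. You combine these with two elementary facts: composing full-column-rank (resp.\ full-row-rank) factors preserves full rank, and a rank is bounded by the inner dimension.

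One slip needs fixing. With your convention $X_{\ge k+1}\in\mathbb{R}^{r_k\times(n_{k+1}\cdots n_d)}$, the right recursion should read $X_{\ge k}=R(\mathcal{X}_k)\,(X_{\ge k+1}\otimes I_{n_k})$, with no transpose. What you wrote is the transpose of \eqref{eq_inte}, which is valid only for the paper's tall $\mathcal{X}_{\ge k+1}$. With your wide convention the product is not conformable. Nothing else changes, since you only use that $X_{\ge k+1}\otimes I_{n_k}$ has full row rank $n_k r_k$.

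Also, for $d\ge 2$ you need not assume $\mathcal{X}\neq 0$ separately. In the ``only if'' direction you already have $\operatorname{rank}R(\mathcal{X}_d)=r_{d-1}\ge 1$, and since $r_d=1$ the matrices $R(\mathcal{X}_d)$ and $L(\mathcal{X}_d)$ have the same entries, so $L(\mathcal{X}_d)\neq 0$. The same reasoning applied to $\mathcal{X}_1$ shows $R(\mathcal{X}_1)\neq 0$.
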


The tensor $\mathcal{X}$ defined by \eqref{eq_ttd} is equivalently written as
\begin{equation}\label{tt-map}
\mathcal{X} = \Phi(\mathcal{X}_1, \mathcal{X}_2, \dots, \mathcal{X}_d),
\end{equation}
where $\Phi$ maps the sequence of TT cores to the full tensor via the product in \eqref{eq_ttd}.

Setting $n = \max_{1\le k\le d} n_k$ and $r = \max_{0\le k\le d} r_k$, the storage cost of $\mathcal{X}$ in TT format is $\mathcal{O}(d n r^2)$, which avoids exponential scaling with the order $d$. Thus, the TT decomposition provides an efficient representation for high-order data.

We review the interface matrices \cite{steinlechner2016riemannian,holtz2012manifolds} and then introduce a generalized definition that is central to our work.
For each $k \in [d]$, the interface matrices $\mathcal{X}_{\le k} \in \mathbb{R}^{(n_1 \cdots n_k) \times r_k}$ and $\mathcal{X}_{\ge k} \in \mathbb{R}^{(n_k \cdots n_d) \times r_{k-1}}$ are defined by
\begin{align*}
\mathcal{X}_{\le k} &= \operatorname{reshape}\!\Bigl(\Phi(\mathcal{X}_1, \dots, \mathcal{X}_k),\; \prod_{i=1}^k n_i,\; r_k \Bigr), \\[2pt]
\mathcal{X}_{\ge k} &= \Bigl( \operatorname{reshape}\!\Bigl(\Phi(\mathcal{X}_k, \dots, \mathcal{X}_d),\; r_{k-1},\; \prod_{i=k}^d n_i \Bigr) \Bigr)^{\!\top},
\end{align*}
where the $\operatorname{reshape}$ operator rearranges the entries into a matrix of the prescribed shape. These matrices satisfy the recursions
\begin{equation}\label{eq_inte}
\mathcal{X}_{\le k} = (I_{n_k} \otimes \mathcal{X}_{\le k-1}) \, L(\mathcal{X}_k), \quad
\mathcal{X}_{\ge k} = (\mathcal{X}_{\ge k+1} \otimes I_{n_k}) \, R(\mathcal{X}_k)^{\!\top},
\end{equation}
with the conventions $\mathcal{X}_{\le 0} = \mathcal{X}_{\ge d+1} = 1$.

We now generalize these constructs. For $k \in [d]$ and $u_k \in [k]$, define $\mathcal{X}_{\le k, u_k} \in \mathbb{R}^{r_{k-u_k} n_{k-u_k} \cdots n_k \times r_k}$ by
\[
\mathcal{X}_{\le k, u_k} = \operatorname{reshape}\!\Bigl( \Phi(\mathcal{X}_{k-u_k+1}, \dots, \mathcal{X}_k),\; r_{k-u_k} \prod_{i=k-u_k+1}^{k} n_i,\; r_k \Bigr).
\]
Similarly, for $k \in [d]$ and $a_k \in [d-k+1]$, define $\mathcal{X}_{\ge k, a_k} \in \mathbb{R}^{n_k \cdots n_{k+a_k-1} r_{k+a_k-1} \times r_{k-1}}$ by
\[
\mathcal{X}_{\ge k, a_k} = \Bigl( \operatorname{reshape}\!\Bigl( \Phi(\mathcal{X}_k, \dots, \mathcal{X}_{k+a_k-1}),\; r_{k-1},\; r_{k+a_k-1} \prod_{i=k}^{k+a_k-1} n_i \Bigr) \Bigr)^{\!\top}.
\]
Here $u_k$ (respectively $a_k$) counts how many consecutive TT cores, starting from the $k$th core, are contracted to the left (respectively right). For the boundary cases we set
\[
\mathcal{X}_{\le k, 0} = I_{r_k}, \qquad \mathcal{X}_{\ge k, 0} = I_{r_{k-1}}.
\]

Define the matrices
\begin{equation*}
L^{k, u_k} = \mathcal{X}_{\le k, u_k}^{\!\top} \mathcal{X}_{\le k, u_k}, \quad
R^{k, a_k} = \mathcal{X}_{\ge k, a_k}^{\!\top} \mathcal{X}_{\ge k, a_k},
\end{equation*}
for $k \in [d]$, $u_k \in \{0,\dots,k\}$, and $a_k \in \{0,\dots,d-k+1\}$, with the convention $L^{0, u_0} = R^{d+1, a_{d+1}} = 1$.
When the indices $u_k$ and $a_k$ are clear from context, we write $L^{k}$ and $R^{k}$.
The extended interface matrices satisfy the recursive relations
\begin{equation}\label{eq_fur_inte}
\begin{aligned}
\mathcal{X}_{\le k} &= (I_{n_{k-u_k+1} \times \cdots \times n_k} \otimes \mathcal{X}_{\le k-u_k}) \, \mathcal{X}_{\le k, u_k}, \\
\mathcal{X}_{\ge k} &= (\mathcal{X}_{\ge k+a_k} \otimes I_{n_k \times \cdots \times n_{k+a_k-1}}) \, \mathcal{X}_{\ge k, a_k}.
\end{aligned}
\end{equation}
From the definitions, the $k$th unfolding of $\mathcal{X}$ can be written as
\begin{equation}\label{eq_mk}
\mathcal{X}_{\langle k \rangle} = \mathcal{X}_{\le k} \, \mathcal{X}_{\ge k+1}^{\!\top}.
\end{equation}

A set of TT cores $\{\mathcal{X}_k \in \mathbb{R}^{r_{k-1} \times n_k \times r_k} : k \in [d] \}$ is called $i$-orthogonal \cite{steinlechner2016riemannian} if
\begin{equation}\label{eq_lo}
\begin{aligned}
L(\mathcal{X}_k)^{\!\top} L(\mathcal{X}_k) &= I_{r_k}, & k &= 1, \dots, i-1, \\
R(\mathcal{X}_k) R(\mathcal{X}_k)^{\!\top} &= I_{r_{k-1}}, & k &= i+1, \dots, d.
\end{aligned}
\end{equation}
When $i = d$, the cores are said to be left-orthogonal. This left-orthogonal TT format is a commonly used representation in practice \cite{steinlechner2016riemannian, holtz2012manifolds, cai2022provable}.
\begin{proposition}\label{tt-uniq}
Let $\mathcal{X} \in \mathbb{R}^{n_1 \times n_2 \times \cdots \times n_d}$ satisfy $\operatorname{rank}_{\mathrm{TT}}(\mathcal{X}) = (r_0, r_1, \ldots, r_d)$.
Then there exists a TT decomposition \eqref{eq_ttd} of $\mathcal{X}$ with left-orthogonal cores, i.e., $L(\mathcal{X}_k)^{\!\top} L(\mathcal{X}_k) = I_{r_k}$ for all $k \in [d-1]$.
Furthermore, if two such left-orthogonal decompositions satisfy
\[
\Phi(\mathcal{X}_1, \dots, \mathcal{X}_d) = \Phi(\mathcal{Y}_1, \dots, \mathcal{Y}_d),
\]
then there exist orthogonal matrices $Q_k \in \mathbb{R}^{r_k \times r_k}$ ($k \in [d-1]$) such that
\[
\mathcal{Y}_k = \mathcal{X}_k \times_1 Q_{k-1} \times_3 Q_k \qquad (k \in [d]),
\]
with $Q_0 = Q_d = 1$.
\end{proposition}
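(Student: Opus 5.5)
Existence is proved by left-orthogonalizing an arbitrary rank-$\rr$ decomposition, and uniqueness by peeling off the cores from the left with an induction on $k$.

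\emph{Existence.} Since $\operatorname{rank}_{\mathrm{TT}}(\mathcal{X})=\rr$, the SVD-based TT algorithm, or any decomposition with these core sizes, gives cores $\mathcal{X}_1,\dots,\mathcal{X}_d$. Sweep left to right. At step $k\le d-1$, take a thin QR factorization $L(\mathcal{X}_k)=Q R$ with $Q\in\mathbb{R}^{r_{k-1}n_k\times r_k}$ orthonormal and $R\in\mathbb{R}^{r_k\times r_k}$. Proposition~\ref{tt-rank} gives $\operatorname{rank}L(\mathcal{X}_k)=r_k$, so $R$ is invertible. Replace $\mathcal{X}_k$ by the core whose left unfolding is $Q$, and replace $\mathcal{X}_{k+1}$ by $\mathcal{X}_{k+1}\times_1 R$. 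Because $\mathcal{X}_k(i_k)\mathcal{X}_{k+1}(i_{k+1})$ is unchanged, the tensor is unchanged. By \eqref{eq:Rtrans} the left multiplication by the invertible $R$ keeps $\operatorname{rank}R(\mathcal{X}_{k+1})=r_k$ and $\operatorname{rank}L(\mathcal{X}_{k+1})=r_{k+1}$, so the rank hypotheses persist to the next step. After $d-1$ steps the cores are left-orthogonal.

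\emph{Uniqueness.} Take two left-orthogonal decompositions with the same sizes. For each $k\in[d-1]$, \eqref{eq_inte} and induction show that $\mathcal{X}_{\le k}$ has orthonormal columns, and \eqref{eq_mk} gives $\mathcal{X}_{\langle k\rangle}=\mathcal{X}_{\le k}\mathcal{X}_{\ge k+1}^\top=\mathcal{Y}_{\le k}\mathcal{Y}_{\ge k+1}^\top$. The unfolding $\mathcal{X}_{\langle k\rangle}$ has rank $r_k$, so $\mathcal{X}_{\ge k+1}$ and $\mathcal{Y}_{\ge k+1}$ have full column rank $r_k$. Hence $\mathcal{X}_{\le k}$ and $\mathcal{Y}_{\le k}$ both span the column space of $\mathcal{X}_{\langle k\rangle}$. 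Define $Q_k=\mathcal{X}_{\le k}^\top\mathcal{Y}_{\le k}$. Since both are orthonormal bases of the same space, $Q_k$ is orthogonal and $\mathcal{Y}_{\le k}=\mathcal{X}_{\le k}Q_k$. Set $Q_0=Q_d=1$.

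The cores themselves are then recovered by induction on $k$. By \eqref{eq_inte},
\[
(I\otimes\mathcal{Y}_{\le k-1})L(\mathcal{Y}_k)=\mathcal{Y}_{\le k}=\mathcal{X}_{\le k}Q_k=(I\otimes\mathcal{X}_{\le k-1})L(\mathcal{X}_k)Q_k .
\]
Substitute $\mathcal{Y}_{\le k-1}=\mathcal{X}_{\le k-1}Q_{k-1}$ and multiply on the left by $(I\otimes\mathcal{X}_{\le k-1})^\top$, which has orthonormal rows, so the product collapses to the identity. This yields $(I\otimes Q_{k-1})L(\mathcal{Y}_k)=L(\mathcal{X}_k)Q_k$. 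Hence $L(\mathcal{Y}_k)=(I\otimes Q_{k-1}^\top)L(\mathcal{X}_k)Q_k$, which by \eqref{eq:Ltrans} says $\mathcal{Y}_k=\mathcal{X}_k\times_1 Q_{k-1}^\top\times_3 Q_k^\top$. Renaming $Q_k^\top$ as $Q_k$ puts this in the stated form.

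For $k=d$ take $Q_d=1$. Equality of the full tensors gives $\mathcal{Y}_{\le d}=\mathcal{X}_{\le d}$, and the same cancellation applies with $r_d=1$.

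The only delicate point is the rank argument showing that $\mathcal{X}_{\le k}$ and $\mathcal{Y}_{\le k}$ have the same column space. It rests on both $\mathcal{X}_{\le k}$ and $\mathcal{X}_{\ge k+1}$ having full column rank $r_k$. This follows from $\operatorname{rank}\mathcal{X}_{\langle k\rangle}=r_k$ and the factorization \eqref{eq_mk}, since a product of an $N\times r_k$ and an $r_k\times M$ matrix has rank $r_k$ only if both factors do.
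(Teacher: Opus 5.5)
Your proof is correct. The paper gives no proof of this proposition; it is quoted as a standard fact from the TT literature (cf.\ \cite{holtz2012manifolds,oseledets2011tensor}), so there is no in-paper argument to compare against. Your argument follows the standard route. Existence comes from a left-to-right QR sweep. Uniqueness comes from observing that the orthonormal interface matrices $\mathcal{X}_{\le k}$ and $\mathcal{Y}_{\le k}$ both span the column space of $\mathcal{X}_{\langle k\rangle}$. This holds because $\operatorname{rank}\mathcal{X}_{\langle k\rangle}=r_k$ forces $\mathcal{X}_{\ge k+1}$ and $\mathcal{Y}_{\ge k+1}$ to have full column rank. Hence $Q_k=\mathcal{X}_{\le k}^{\top}\mathcal{Y}_{\le k}$ is orthogonal, and the cores are peeled off via \eqref{eq_inte}. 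Your index bookkeeping is also right: you get $\mathcal{Y}_k=\mathcal{X}_k\times_1 Q_{k-1}^{\top}\times_3 Q_k^{\top}$ from \eqref{eq:Ltrans}, rename uniformly, and handle the case $k=d$.

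An alternative route, closer to the $\mathrm{GL}(r_k)$ quotient of \cite{cai2022tensor}, would first prove uniqueness up to an invertible gauge for arbitrary minimal-rank decompositions. It would then show that left-orthogonality forces the gauge into $\mathrm{O}(r_k)$. Working directly with orthonormal bases, as you do, gives orthogonality of $Q_k$ for free.

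Three small simplifications are available, none of which is needed for correctness:
\begin{enumerate}
\item The SVD-based TT algorithm already outputs left-orthogonal cores, since each left unfolding is a matrix of left singular vectors. The sweep is therefore optional.
\item The rank bookkeeping in the sweep is automatic by Proposition~\ref{tt-rank}. Every intermediate decomposition represents the same tensor of TT-rank $\rr$ with the same core sizes.
\item Since each $Q_k$ is defined directly from the interface matrices rather than recursively, the core-recovery step needs no induction. It only uses $\mathcal{Y}_{\le k-1}=\mathcal{X}_{\le k-1}Q_{k-1}$ at each $k$.
\end{enumerate}
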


\subsection{Geometric Structure of the Manifold}
Let $\mathcal{N}_{\rr}$ denote the set of tensors with fixed TT-rank $\rr = (r_0, r_1, \dots, r_d)$; that is,
\[
\mathcal{N}_{\rr} = \{\mathcal{X} \in \mathbb{R}^{n_1 \times n_2 \times \cdots \times n_d} : \operatorname{rank}_{\mathrm{TT}}(\mathcal{X}) = \rr\},
\]
where $r_0 = r_d = 1$. The necessary and sufficient conditions for $\mathcal{N}_{\rr}$ to be nonempty are \cite{uschmajew2020geometric}
\begin{equation}\label{eqn_cond}
r_{k-1} \le n_k r_k, \qquad r_k \le n_k r_{k-1}, \qquad k \in [d].
\end{equation}
Throughout this paper we assume that the TT-rank vector $\rr$ satisfies \eqref{eqn_cond}. The set $\mathcal{N}_{\rr}$ is a smooth manifold of dimension
$\sum_{k=1}^d (r_{k-1} n_k r_k) - \sum_{k=1}^{d-1} r_k^2$ \cite{holtz2012manifolds}.

Let $\mathbb{R}^{r_{k-1}\times n_k\times r_k}_*$ denote the set of third-order tensors whose two unfolding matrices are both of full rank; i.e.,
\[
\mathbb{R}^{r_{k-1}\times n_k\times r_k}_* = \bigl\{\mathcal{A}\in\mathbb{R}^{r_{k-1}\times n_k\times r_k}:
\operatorname{rank}(L(\mathcal{A})) = r_k,\ \operatorname{rank}(R(\mathcal{A})) = r_{k-1}\bigr\}.
\]
A parameterization of $\mathcal{N}_{\rr}$ is given by
\[
\begin{aligned}
\overline{\mathcal{M}}_{\rr} = \bigl\{ &(\mathcal{X}_1,\dots,\mathcal{X}_d)\in \mathbb{R}^{r_{0}\times n_1\times r_1}_* \times \cdots \times \mathbb{R}^{r_{d-1}\times n_d\times r_d}_* : \\
& L(\mathcal{X}_k)^{\!\top} L(\mathcal{X}_k) = I_{r_k},\ k \in [d-1] \bigr\}.
\end{aligned}\]

By Proposition~\ref{tt-rank}, the mapping $\Phi$ defined in \eqref{tt-map} is surjective from $\overline{\mathcal{M}}_{\rr}$ onto $\mathcal{N}_{\rr}$, but it is not injective. For any $\mathcal{X}\in\mathcal{N}_{\rr}$, Proposition~\ref{tt-uniq} guarantees the existence of $\bar{\mathcal{X}}=(\mathcal{X}_1,\dots,\mathcal{X}_d)\in\overline{\mathcal{M}}_{\rr}$ such that $\Phi(\bar{\mathcal{X}})=\mathcal{X}$. If we define transformed cores
\[
\mathcal{Y}_k = \mathcal{X}_k \times_1 Q_{k-1} \times_3 Q_k, \qquad k \in [d],
\]
where $Q_0=Q_d=1$ and $Q_1,\dots,Q_{d-1}$ are orthogonal matrices of appropriate sizes, then $\bar{\mathcal{Y}}=(\mathcal{Y}_1,\dots,\mathcal{Y}_d)$ also belongs to $\overline{\mathcal{M}}_{\rr}$ and satisfies $\Phi(\bar{\mathcal{Y}})=\mathcal{X}$. Proposition~\ref{tt-uniq} further shows that this transformation by orthogonal matrices is the only source of non-uniqueness when $\operatorname{rank}_{\mathrm{TT}}(\mathcal{X})=\rr$. This observation motivates the identification of equivalent elements in $\overline{\mathcal{M}}_{\rr}$ that represent the same tensor, leading naturally to a quotient-manifold description \cite{absil2008optimization}.

Consequently, we endow $\overline{\mathcal{M}}_{\rr}$ with the equivalence relation $\sim$ defined by $\bar{\mathcal{X}} \sim \bar{\mathcal{Y}}$ if and only if $\Phi(\bar{\mathcal{X}}) = \Phi(\bar{\mathcal{Y}})$ for $\bar{\mathcal{X}}=(\mathcal{X}_1,\dots,\mathcal{X}_d),\ \bar{\mathcal{Y}}=(\mathcal{Y}_1,\dots,\mathcal{Y}_d) \in \overline{\mathcal{M}}_{\rr}$. According to Proposition~\ref{tt-uniq}, $\Phi(\bar{\mathcal{X}}) = \Phi(\bar{\mathcal{Y}})$ holds precisely when there exist $Q_k \in \mathrm{O}(r_k)$ ($k\in[d-1]$) such that
\begin{equation}\label{orth-group}
\bar{\mathcal{Y}} = \varphi_Q(\bar{\mathcal{X}}) :=
\bigl(\mathcal{X}_1 \times_3 Q_1,\; \mathcal{X}_2 \times_1 Q_1 \times_3 Q_2,\; \dots,\; \mathcal{X}_d \times_1 Q_{d-1}\bigr).
\end{equation}
The equivalence class containing $\bar{\mathcal{X}}$ is
$
[\bar{\mathcal{X}}] = \{ \bar{\mathcal{Y}} \in \overline{\mathcal{M}}_{\rr} : \bar{\mathcal{Y}} \sim \bar{\mathcal{X}} \}.
$
The quotient manifold of $\overline{\mathcal{M}}_{\rr}$ by $\sim$ is denoted as
\[
\mathcal{M}_{\rr} = \overline{\mathcal{M}}_{\rr}/\!\sim \; = \{ [\bar{\mathcal{X}}] : \bar{\mathcal{X}} \in \overline{\mathcal{M}}_{\rr} \},
\]
and the canonical projection $\pi:\overline{\mathcal{M}}_{\rr} \to \mathcal{M}_{\rr}$ is defined by $\pi(\bar{\mathcal{X}}) = [\bar{\mathcal{X}}]$.

Recently, a related construction was given in \cite{cai2022tensor}, which considers a larger total space without orthogonality constraints:
\[
\overline{\mathcal{F}}_{\rr}=\bigl\{(\mathcal{X}_1,\dots,\mathcal{X}_d)\in \mathbb{R}^{r_{0}\times n_1\times r_1}_* \times \cdots \times \mathbb{R}^{r_{d-1}\times n_d\times r_d}_*  \bigr\}.
\]
Equivalence in $\overline{\mathcal{F}}_{\rr}$ is defined via the action of the general linear group:
\begin{equation}\label{eq:GL-action}
\bar{\mathcal{Y}} \sim' \bar{\mathcal{X}} \;\Longleftrightarrow\; \bar{\mathcal{Y}} = \theta_G(\bar{\mathcal{X}}) := \bigl(\mathcal{X}_1 \times_3 G_1^{\!\top},\; \mathcal{X}_2 \times_1 G_1^{-1} \times_3 G_2^{\!\top},\; \dots,\; \mathcal{X}_d \times_1 G_{d-1}^{-1}\bigr),
\end{equation}
where $G_k \in \mathrm{GL}(r_k)$ for $k\in [d-1]$. The corresponding quotient manifold is $\mathcal{F}_{\rr} = \overline{\mathcal{F}}_{\rr}/\!\sim'$.

Clearly, the total space $\overline{\mathcal{M}}_{\rr}$ is a submanifold of $\overline{\mathcal{F}}_{\rr}$, and the orthogonal gauge transformations \eqref{orth-group} are a restriction of the general linear action \eqref{eq:GL-action} to $Q_k \in \mathrm{O}(r_k) \subset \mathrm{GL}(r_k)$. Imposing the left-orthogonality constraints thus reduces the quotienting group from $\mathrm{GL}(r_k)$ to $\mathrm{O}(r_k)$. As will be shown in Section~\ref{rieme}, this reduction, together with the left-orthogonal structure, enables a family of admissible Riemannian metrics on the quotient manifold $\mathcal{M}_{\rr}$, in contrast to the single metric used in previous quotient-based works.

\section{Quotient Manifold Structure}
\label{sec:quotient}

In this section, we lift the tensor completion problem to the total space $\overline{\mathcal{M}}_{\rr}$ and introduce a family of Riemannian metrics compatible with the quotient structure.

Recall the original problem in the tensor space $\mathcal{N}_{\rr}$:
\[
\min_{\mathcal{X} \in \mathcal{N}_{\rr}} F(\mathcal{X}) = \frac12 \| \mathcal{P}_\Omega(\mathcal{X}) - \mathcal{P}_\Omega(\Gamma) \|_F^2,
\]
where $\mathcal{P}_\Omega$ and $\Gamma$ are defined in \eqref{eqttrank}. Using the parameterization $\mathcal{X} = \Phi(\mathcal{X}_1,\dots,\mathcal{X}_d)$, we lift the problem to the total space:
\begin{equation}\label{problem-manifold}
\min_{\bar{\mathcal{X}} \in \overline{\mathcal{M}}_{\rr}} \; \overline{h}(\bar{\mathcal{X}}) = \frac12 \| \mathcal{P}_\Omega(\Phi(\mathcal{X}_1,\dots,\mathcal{X}_d)) - \mathcal{P}_\Omega(\Gamma) \|_F^2.
\end{equation}
Since $\overline{h}$ depends only on $\Phi(\bar{\mathcal{X}})$, it is invariant under the equivalence relation: $\bar{\mathcal{X}} \sim \bar{\mathcal{Y}}$ implies $\overline{h}(\bar{\mathcal{X}}) = \overline{h}(\bar{\mathcal{Y}})$. Hence $\overline{h}$ descends to a well-defined smooth function $h$ on the quotient manifold $\mathcal{M}_{\rr}$ satisfying $\overline{h} = h \circ \pi$. We shall construct a family of Riemannian metrics on $\overline{\mathcal{M}}_{\rr}$ that are invariant under the equivalence relation, thereby inducing corresponding metrics on $\mathcal{M}_{\rr}$, and then apply first-order Riemannian optimization methods to solve \eqref{problem-manifold}.

\subsection{A Family of Admissible Metrics}\label{sec:metric_family}
We introduce the following class of Riemannian metrics on $\overline{\mathcal{M}}_{\rr}$. For any tuple $\va = (a_2,\dots,a_{d+1})$ with $a_k \in \{0,1,\dots,d-k+1\}$, define
\begin{equation}\label{eq:metric}
\overline{g}_{\bar{\mathcal{X}}}^{\va}(\overline{\xi},\overline{\eta})
= \sum_{k=1}^{d} \bigl\langle
\Phi(\mathcal{X}_1,\dots,\xi_k,\dots,\mathcal{X}_{k+a_{k+1}}),\;
\Phi(\mathcal{X}_1,\dots,\eta_k,\dots,\mathcal{X}_{k+a_{k+1}})
\bigr\rangle,
\end{equation}
where $\overline{\xi}=(\xi_1,\dots,\xi_d)$ and $\overline{\eta}=(\eta_1,\dots,\eta_d)$ are tangent vectors at $\bar{\mathcal{X}}$. The collection of all such metrics is denoted by $\mathfrak{M} = \{\overline{g}^{\va}\}$. For any fixed $\va$, $\overline{g}^{\va}$ defines a smooth Riemannian metric on $\overline{\mathcal{M}}_{\rr}$.

For the last core (\(k=d\)), the admissible set for \(a_{d+1}\) is \(\{0\}\); thus \(a_{d+1}=0\). Consequently, the family of metrics \(\overline{g}^{\va}\) is effectively parameterized by the first \(d-1\) entries \((a_2,\dots,a_d)\). The cardinality of the parameter set is therefore \(\prod_{k=1}^{d-1}(d-k+1)=d!\). For ease of reference, we enumerate all admissible parameter tuples in lexicographic order (with \(a_2\) as the most significant, then \(a_3\), etc.) as \(\va_1,\va_2,\dots,\va_{d!}\). In particular, \(\va_1 = (0,0,\dots,0)\) corresponds to the Euclidean metric, and \(\va_{d!} = (d-1,d-2,\dots,1,0)\) corresponds to the full metric used in previous quotient-based works. We refer to a metric by its parameter \(\va_i\) when the specific choice is relevant.

\begin{theorem}\label{lem:transmetric}
The metric \eqref{eq:metric} admits the equivalent expression
\begin{equation}\label{eq:transmetric}
\overline{g}_{\bar{\mathcal{X}}}^{\va}(\overline{\xi},\overline{\eta})
= \sum_{k=1}^{d} \bigl\langle L(\xi_k) R^{k+1},\; L(\eta_k) \bigr\rangle,
\end{equation}
where $R^{k+1} = \mathcal{X}_{\geq k+1,a_{k+1}}^{\!\top}\mathcal{X}_{\geq k+1,a_{k+1}}$.
\end{theorem}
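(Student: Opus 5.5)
The plan is to verify the identity termwise: I will show that for each fixed $k\in[d]$,
\[
\bigl\langle \Phi(\mathcal{X}_1,\dots,\xi_k,\dots,\mathcal{X}_{k+a_{k+1}}),\ \Phi(\mathcal{X}_1,\dots,\eta_k,\dots,\mathcal{X}_{k+a_{k+1}})\bigr\rangle = \bigl\langle L(\xi_k)R^{k+1},\ L(\eta_k)\bigr\rangle ,
\]
and then sum over $k$. The idea is to pick a convenient matricization of the partial tensor $\Phi(\mathcal{X}_1,\dots,\xi_k,\dots,\mathcal{X}_{k+a_{k+1}})$. This tensor has size $n_1\times\cdots\times n_{k+a_{k+1}}\times r_{k+a_{k+1}}$, with the trailing rank index kept when $k+a_{k+1}<d$. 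I will reshape it with row index $\overline{i_1\cdots i_k}$ and column index collecting $(i_{k+1},\dots,i_{k+a_{k+1}})$ and the trailing rank index. The Frobenius inner product is invariant under any common reshaping, so it can be computed in this matricized form.

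The first step is to express the matricization as a product. By the definition of the interface matrices and the recursion \eqref{eq_inte}, the first $k$ cores with $\xi_k$ in slot $k$ contract to $(I_{n_k}\otimes\mathcal{X}_{\le k-1})L(\xi_k)\in\mathbb{R}^{(n_1\cdots n_k)\times r_k}$. By the definition of $\mathcal{X}_{\ge k+1,a_{k+1}}$, the remaining $a_{k+1}$ cores contract to $\mathcal{X}_{\ge k+1,a_{k+1}}^{\top}$, an $r_k\times(n_{k+1}\cdots n_{k+a_{k+1}}r_{k+a_{k+1}})$ matrix. The matricization is therefore
\[
(I_{n_k}\otimes\mathcal{X}_{\le k-1})\,L(\xi_k)\,\mathcal{X}_{\ge k+1,a_{k+1}}^{\top}.
\]
The boundary cases fit the same formula. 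If $a_{k+1}=0$, the right factor is $I_{r_k}$, which matches the convention $\mathcal{X}_{\ge k,0}=I$. If $k=d$, the right factor is $R^{d+1}=1$.

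The second step is to take the trace inner product of the two such products. This gives
\[
\operatorname{tr}\bigl(\mathcal{X}_{\ge k+1,a_{k+1}}L(\xi_k)^\top (I\otimes\mathcal{X}_{\le k-1})^\top(I\otimes\mathcal{X}_{\le k-1})L(\eta_k)\mathcal{X}_{\ge k+1,a_{k+1}}^\top\bigr).
\]
Left-orthogonality gives $\mathcal{X}_{\le k-1}^\top\mathcal{X}_{\le k-1}=I_{r_{k-1}}$. This follows by induction from \eqref{eq_inte}: $\mathcal{X}_{\le j}^\top\mathcal{X}_{\le j}=L(\mathcal{X}_j)^\top(I\otimes\mathcal{X}_{\le j-1}^\top\mathcal{X}_{\le j-1})L(\mathcal{X}_j)=L(\mathcal{X}_j)^\top L(\mathcal{X}_j)=I$, which holds for $j\le d-1$ and hence covers every $k-1\le d-1$. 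So the middle Kronecker factor collapses to the identity. Cycling the trace then yields $\operatorname{tr}\bigl(L(\xi_k)^\top L(\eta_k)R^{k+1}\bigr)=\langle L(\xi_k)R^{k+1},L(\eta_k)\rangle$, using that $R^{k+1}$ is symmetric.

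The main obstacle is bookkeeping rather than any idea. One has to check that the multi-index ordering of the chosen reshape is consistent with the Kronecker ordering $I_{n_k}\otimes\mathcal{X}_{\le k-1}$ and with the transpose in the definition of $\mathcal{X}_{\ge k+1,a_{k+1}}$. In other words, the contraction over the shared rank index $r_k$ must really be the matrix product written above. I would verify this entrywise from \eqref{eq_ttd} using the index convention $\overline{i_1\cdots i_d}$. Any permutation mismatch between the two arguments is harmless, since the same reshape is applied to both sides of the inner product.
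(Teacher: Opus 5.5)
Your proposal is correct and follows essentially the same route as the paper: write each term as $(I_{n_k}\otimes\mathcal{X}_{\le k-1})L(\xi_k)\mathcal{X}_{\ge k+1,a_{k+1}}^{\top}$ via the interface recursions, use left-orthogonality to collapse the left factor, and absorb $\mathcal{X}_{\ge k+1,a_{k+1}}^{\top}\mathcal{X}_{\ge k+1,a_{k+1}}=R^{k+1}$ by cycling the trace. Your inductive check that $\mathcal{X}_{\le k-1}^{\top}\mathcal{X}_{\le k-1}=I_{r_{k-1}}$ and your treatment of the boundary cases make explicit what the paper's one-line computation leaves implicit.
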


\begin{proof}
Using the recursive relations (\ref{eq_inte}) and the left-orthogonality conditions (\ref{eq_lo}), we obtain
\begin{equation*}
		\begin{aligned}
			\overline{g}_{\bX}^{\va}(\overline{\xi},\overline{\eta})&=\sum_{k=1}^{d}\langle \Phi( \mathcal{X}_1,...,\xi_k,...,\mathcal{X}_{k+a_{k+1}}),\Phi( \mathcal{X}_1,...,\eta_k,...,\mathcal{X}_{k+a_{k+1}})\rangle
            \\&=\sum_{k=1}^{d}\langle (I_{n_k}\otimes \mathcal{X}_{\leq k-1})L(\xi_k)\mathcal{X}_{\geq k+1,a_{k+1}}^{\!\top}, (I_{n_k}\otimes \mathcal{X}_{\leq k-1})L(\eta_k)\mathcal{X}_{\geq k+1,a_{k+1}}^{\!\top}\rangle
            \\&=\sum_{k=1}^{d}\langle L(\xi_k)R^{k+1},L(\eta_k)\rangle.
		\end{aligned}
	\end{equation*}
This completes the proof.
\end{proof}

In \eqref{eq:metric}, two special choices of $\va$ are worth noting:
\begin{itemize}
\item When $\va = (0,\dots,0)$ (i.e., $\va_1$), the metric reduces to the Euclidean metric on the product space:
\[
\overline{g}_{\bar{\mathcal{X}}}^{\va}(\overline{\xi},\overline{\eta}) = \sum_{k=1}^{d} \langle \xi_k, \eta_k \rangle.
\]

\item When $\va = (d-1,d-2,\dots,0)$ (i.e., $\va_{d!}$), the metric coincides with the full metric used in previous quotient-based works:
\begin{equation}\label{maxmetric}
\overline{g}_{\bar{\mathcal{X}}}^{\va}(\overline{\xi},\overline{\eta})
= \sum_{k=1}^{d} \bigl\langle
\Phi(\mathcal{X}_1,\dots,\xi_k,\dots,\mathcal{X}_d),\;
\Phi(\mathcal{X}_1,\dots,\eta_k,\dots,\mathcal{X}_d)
\bigr\rangle.
\end{equation}
\end{itemize}

\subsection{Normal Space and Tangent Space}

Since each left unfolding $L(\mathcal{X}_k)$ lies in the Stiefel manifold $\mathrm{St}(r_k, r_{k-1}n_k)$, the tangent space of $\overline{\mathcal{M}}_{\rr}$ reduces to standard computations on a product of Stiefel manifolds~\cite{absil2008optimization}.  Following standard arguments for the Stiefel manifold ~\cite[Section~3.5.7]{absil2008optimization}, the tangent space \(T_{\bar{\mathcal{X}}}\overline{\mathcal{M}}_{\rr}\) can be written as follows.

\begin{proposition}\label{pro:tangent}
The tangent space \(T_{\bar{\mathcal{X}}}\overline{\mathcal{M}}_{\rr}\) is given by
\[
\begin{aligned}
T_{\bar{\mathcal{X}}}\overline{\mathcal{M}}_{\rr} = \bigl\{ &\bar{\eta}=(\eta_1,\dots,\eta_d) \in \prod_{k=1}^d \mathbb{R}^{r_{k-1}\times n_k\times r_k} \;\big|\;
 L(\mathcal{X}_k)^{\!\top} L(\eta_k) + L(\eta_k)^{\!\top} L(\mathcal{X}_k) = 0_{r_k}, \\
&k \in [d-1] \bigr\}.
\end{aligned}\]
\end{proposition}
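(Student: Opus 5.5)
The plan is to realize $\overline{\mathcal{M}}_{\rr}$ as an open subset of a product of Stiefel manifolds and then read off the tangent space factor by factor.

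\textbf{Step 1: reduce to a product of Stiefel manifolds.} Identify each core $\mathcal{X}_k$ with its left unfolding $L(\mathcal{X}_k)\in\mathbb{R}^{r_{k-1}n_k\times r_k}$. The map $\mathcal{A}\mapsto L(\mathcal{A})$ is a linear isomorphism, so this identification is harmless. The orthogonality constraints for $k\in[d-1]$ say exactly that $L(\mathcal{X}_k)\in\mathrm{St}(r_k,r_{k-1}n_k)$. The last core carries no equality constraint.

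\textbf{Step 2: discard the full-rank conditions.} The conditions defining $\mathbb{R}_*^{r_{k-1}\times n_k\times r_k}$ are rank conditions. Rank conditions are open, since they are nonvanishing of some minor. Moreover, $\operatorname{rank}L(\mathcal{X}_k)=r_k$ holds automatically on the Stiefel manifold. Hence
\[
\overline{\mathcal{M}}_{\rr}=\Bigl(\prod_{k=1}^{d-1}\mathrm{St}(r_k,r_{k-1}n_k)\times\mathbb{R}^{r_{d-1}\times n_d\times r_d}\Bigr)\cap U,
\]
where $U$ is the open set on which every $R(\mathcal{X}_k)$ has full row rank.

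\textbf{Step 3: assemble the tangent space.} An open subset of a manifold has the same tangent space at each of its points, and the tangent space of a product is the product of the tangent spaces. The tangent space of the Euclidean factor is all of $\mathbb{R}^{r_{d-1}\times n_d\times r_d}$. The Stiefel tangent space is the standard one from \cite[Section~3.5.7]{absil2008optimization}:
\[
T_X\mathrm{St}=\{Z: X^{\top}Z+Z^{\top}X=0\}.
\]
Combining these three facts gives the claimed description.

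\textbf{Step 4: supporting the Stiefel fact if needed.} If one wants a self-contained argument, I would proceed in two directions. First, the defining map $F(X)=X^\top X-I$ has surjective differential $DF(X)[Z]=X^\top Z+Z^\top X$ onto $\operatorname{Sym}(r_k)$: taking $Z=XS/2$ hits any symmetric $S$. Second, differentiating $L(\mathcal{X}_k(t))^\top L(\mathcal{X}_k(t))=I$ along a curve shows that every tangent vector satisfies the constraint. Equality then follows from the regular value theorem, since $\ker DF(X)=T_X\mathrm{St}$.

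I expect the main obstacle to be Step 2, namely checking that the full-rank conditions in $\mathbb{R}_*$ define an open set, so they do not affect the tangent space. Beyond that, the proof is routine.
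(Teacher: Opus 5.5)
Your proposal is correct and follows essentially the same route as the paper: the paper views $\overline{\mathcal{M}}_{\rr}$ as a product of Stiefel manifolds (for the left unfoldings of the first $d-1$ cores) times an unconstrained last core, and invokes the standard Stiefel tangent space $\{Z : X^{\top}Z+Z^{\top}X=0\}$. Your observation that the full-rank conditions defining $\mathbb{R}_*^{r_{k-1}\times n_k\times r_k}$ are open, together with the regular-value check, supplies details that the paper leaves implicit.
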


For a matrix \(X\in\mathbb{R}^{m\times p}\) with full column rank, let \(\operatorname{span}(X)\) denote the subspace \(\{X\mathbf{a} : \mathbf{a}\in\mathbb{R}^p\}\). 

\begin{lemma}\label{lem:transtan}
The tangent space can also be written as
\[
\begin{aligned}
T_{\bar{\mathcal{X}}}\overline{\mathcal{M}}_{\rr} = \bigl\{ &\bar{\eta}=(\eta_1,\dots,\eta_d) \in \textstyle\prod_{k=1}^d \mathbb{R}^{r_{k-1}\times n_k\times r_k} \; \big| \;
\eta_k = L^{-1}\!\bigl(L(\mathcal{X}_k)\Omega_k + L(\mathcal{X}_k)_{\perp} B_k\bigr), \\
& \Omega_k \in \operatorname{Skew}(r_k),\; B_k \in \mathbb{R}^{(r_{k-1}n_k - r_k)\times r_k},\; k\in[d-1] \bigr\},
\end{aligned}
\]
where \(L(\mathcal{X}_k)_{\perp} \in \mathbb{R}^{r_{k-1}n_k \times (r_{k-1}n_k - r_k)}\) is any matrix whose columns form an orthonormal basis of the orthogonal complement of \(\operatorname{span}(L(\mathcal{X}_k))\).
\end{lemma}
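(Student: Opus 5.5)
Lemma~\ref{lem:transtan} follows from Proposition~\ref{pro:tangent} by a blockwise linear-algebra argument on each core; the last core is unconstrained in both descriptions. Fix $k\in[d-1]$ and write $X = L(\mathcal{X}_k)\in\mathbb{R}^{m\times r_k}$ with $m=r_{k-1}n_k$. Left-orthogonality gives $X^{\!\top}X=I_{r_k}$. Let $X_\perp\in\mathbb{R}^{m\times(m-r_k)}$ be any matrix whose columns form an orthonormal basis of $\operatorname{span}(X)^\perp$. Then $[X\;X_\perp]$ is an orthogonal $m\times m$ matrix. Hence every $Z\in\mathbb{R}^{m\times r_k}$ decomposes uniquely as
\[
Z = X\Omega + X_\perp B, \qquad \Omega = X^{\!\top}Z,\quad B = X_\perp^{\!\top}Z .
\]
This decomposition is available because \eqref{eqn_cond} gives $r_k\le r_{k-1}n_k$, so the complement has the stated dimension.

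For the inclusion ``$\subseteq$'', take $\bar\eta$ in the set of Proposition~\ref{pro:tangent} and apply the decomposition to $Z=L(\eta_k)$. Using $X^{\!\top}X=I$ and $X^{\!\top}X_\perp=0$,
\[
X^{\!\top}Z + Z^{\!\top}X = \Omega + \Omega^{\!\top}.
\]
The tangency condition therefore says exactly that $\Omega\in\operatorname{Skew}(r_k)$, while $B$ is arbitrary. Since $L$ is a linear bijection between $\mathbb{R}^{r_{k-1}\times n_k\times r_k}$ and $\mathbb{R}^{m\times r_k}$, we get $\eta_k=L^{-1}(X\Omega_k+X_\perp B_k)$ with $\Omega_k=\Omega$ and $B_k=B$.

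For the inclusion ``$\supseteq$'', take $\Omega_k$ skew and $B_k$ arbitrary, and set $\eta_k=L^{-1}(X\Omega_k+X_\perp B_k)$. The same computation gives $X^{\!\top}L(\eta_k)+L(\eta_k)^{\!\top}X=\Omega_k+\Omega_k^{\!\top}=0$, so $\bar\eta$ lies in the set of Proposition~\ref{pro:tangent}.

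The only point needing care is that the description does not depend on the choice of $X_\perp$. Any two admissible choices differ by right multiplication by an orthogonal matrix $U$, so $X_\perp B = (X_\perp U)(U^{\!\top}B)$, and $B$ ranges over the same full space in either case. I expect no real obstacle here: this is the standard Stiefel tangent-space parametrization applied core by core.
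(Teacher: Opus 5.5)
Your proof is correct and is essentially the paper's own route: the paper writes out no separate proof of this lemma but attributes it to the standard Stiefel-manifold tangent-space argument of Absil et al. (Section~3.5.7). Your decomposition $L(\eta_k)=X\Omega+X_\perp B$, applied core by core to the constraint in Proposition~\ref{pro:tangent}, with $\eta_d$ left free, is exactly that argument.
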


The normal space at \(\bar{\mathcal{X}}\), denoted by \(N_{\bar{\mathcal{X}}}\overline{\mathcal{M}}_{\rr}\), is the orthogonal complement of the tangent space with respect to the metric \(\overline{g}^{\va}\) defined in \eqref{eq:transmetric}.

\begin{proposition}\label{pro:normspace}
The normal space is given by
\[
\begin{aligned}
N_{\bar{\mathcal{X}}}\overline{\mathcal{M}}_{\rr} = \bigl\{ &\bar{\xi}=(\xi_1,\dots,\xi_d) \in \textstyle\prod_{k=1}^d \mathbb{R}^{r_{k-1}\times n_k\times r_k} \; \big| \;
\xi_k = L^{-1}\!\bigl(L(\mathcal{X}_k) S_k (R^{k+1})^{-1}\bigr),\; \\
&S_k \in \operatorname{Sym}(r_k),\; k\in[d-1],\;
\xi_d = 0_{r_{d-1}\times n_d} \bigr\}.
\end{aligned}
\]
\end{proposition}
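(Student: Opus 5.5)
The plan is to characterize the normal space as the orthogonal complement of the tangent space with respect to the block-diagonal form \eqref{eq:transmetric}. By Theorem~\ref{lem:transmetric}, $\overline{g}^{\va}_{\bar{\mathcal{X}}}$ is a sum of decoupled terms $\langle L(\xi_k) R^{k+1}, L(\eta_k)\rangle$. The tangent space in Proposition~\ref{pro:tangent} is also a product of blockwise constraints: one Stiefel-type condition for each $k\in[d-1]$, and no constraint on $\eta_d$. Hence $\bar\xi$ is normal if and only if, for each $k$ separately, $\langle L(\xi_k)R^{k+1}, L(\eta_k)\rangle = 0$ for every admissible $\eta_k$. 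This reduces the problem to $d$ independent matrix problems.

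\textbf{Positive definiteness of $R^{k+1}$.} First we check that each $R^{k+1}$ is invertible, so that the statement makes sense. By Proposition~\ref{tt-rank}, every $R(\mathcal{X}_j)$ has full row rank. Using the recursion \eqref{eq_fur_inte} (its truncated version with $a_{k+1}$ factors), $\mathcal{X}_{\ge k+1,a_{k+1}}$ is a product of matrices of the form $(\cdot\otimes I)R(\mathcal{X}_j)^\top$, each with full column rank. So $\mathcal{X}_{\ge k+1,a_{k+1}}$ has full column rank $r_k$, and $R^{k+1}$ is symmetric positive definite. When $a_{k+1}=0$ we simply have $R^{k+1}=I$. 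For $k=d$, $R^{d+1}=1$.

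\textbf{Block $k=d$.} Here $\eta_d$ is arbitrary, so we need $\langle L(\xi_d), L(\eta_d)\rangle=0$ for all $\eta_d$. This forces $\xi_d=0$.

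\textbf{Blocks $k\in[d-1]$.} Set $M_k = L(\xi_k)R^{k+1}$. By Lemma~\ref{lem:transtan}, $L(\eta_k)=L(\mathcal{X}_k)\Omega_k + L(\mathcal{X}_k)_\perp B_k$.
\begin{itemize}
\item Testing against arbitrary $B_k$ gives $L(\mathcal{X}_k)_\perp^\top M_k = 0$. Hence $M_k = L(\mathcal{X}_k)S_k$ with $S_k = L(\mathcal{X}_k)^\top M_k$.
\item Testing against arbitrary skew $\Omega_k$ gives $\langle S_k,\Omega_k\rangle = 0$ for all $\Omega_k\in\operatorname{Skew}(r_k)$, i.e.\ $S_k\in\operatorname{Sym}(r_k)$.
\end{itemize}
Multiplying on the right by $(R^{k+1})^{-1}$ then gives $L(\xi_k)=L(\mathcal{X}_k)S_k(R^{k+1})^{-1}$. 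Conversely, any $\bar\xi$ of this form is orthogonal to every tangent vector, by the same two inner-product computations read backwards. This proves both inclusions.

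\textbf{Dimension check and main difficulty.} As a consistency check, the normal space has dimension $\sum_k r_k(r_k+1)/2$. This equals the codimension of the product of Stiefel manifolds in the ambient space, so it agrees with the count from Proposition~\ref{pro:tangent}.

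The only genuinely non-routine step is the invertibility of $R^{k+1}$ for general truncation lengths $a_{k+1}$. There I would argue by induction on $a_{k+1}$ via the recursion for $\mathcal{X}_{\ge k,a_k}$, using that a Kronecker product with an identity preserves full column rank.
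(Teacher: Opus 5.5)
Your proof is correct and follows the same route as the paper: you decouple the metric blockwise, force $\xi_d=0$ because the last core is unconstrained, then split $L(\xi_k)R^{k+1}$ along $L(\mathcal{X}_k)$ and $L(\mathcal{X}_k)_{\perp}$ and test separately against arbitrary $B_k$ and arbitrary skew $\Omega_k$. You also verify that $R^{k+1}$ is positive definite (from the full row rank of each $R(\mathcal{X}_j)$) and check the reverse inclusion; the paper leaves both points implicit.
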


\begin{proof}
Let \(\bar{\xi} \in N_{\bar{\mathcal{X}}}\overline{\mathcal{M}}_{\rr}\). For any tangent vector \(\bar{\eta}\) in the form of
Lemma~\ref{lem:transtan},
the orthogonality condition \(\overline{g}^{\va}_{\bar{\mathcal{X}}}(\bar{\xi},\bar{\eta})=0\) becomes
\[
\sum_{k=1}^{d-1} \bigl\langle L(\xi_k), \bigl(L(\mathcal{X}_k)\Omega_k + L(\mathcal{X}_k)_{\perp} B_k\bigr) R^{k+1} \bigr\rangle + \langle L(\xi_d), L(\eta_d) \rangle = 0.
\]
Since \(\eta_d\) is arbitrary, we have \(\xi_d = 0_{r_{d-1}\times n_d}\). For each \(k\in[d-1]\), writing
\[
L(\xi_k)R^{k+1} = L(\mathcal{X}_k) S_k + L(\mathcal{X}_k)_{\perp} D_k,
\]
with \(S_k \in \mathbb{R}^{r_k\times r_k}\) and \(D_k \in \mathbb{R}^{(r_{k-1}n_k - r_k)\times r_k}\), the orthogonality condition reduces to
\[
\operatorname{tr}\bigl( \Omega_k^{\!\top} S_k + B_k^{\!\top} D_k \bigr) = 0.
\]
Choosing first \(B_k = 0_{(r_{k-1}n_k - r_k)\times r_k}\) shows that \(S_k\) must be symmetric; then choosing \(\Omega_k = 0_{r_k}\) forces \(D_k = 0_{(r_{k-1}n_k - r_k)\times r_k}\). Consequently \(L(\xi_k) = L(\mathcal{X}_k) S_k (R^{k+1})^{-1}\) with \(S_k\) symmetric.
\end{proof}

With the tangent and normal spaces characterized, we now give an explicit formula for the orthogonal projection onto the tangent space with respect to the metric \(\overline{g}^{\va}\).

\begin{theorem}\label{tproj}
The orthogonal projection \(\psi_{\bar{\mathcal{X}}} : \prod_{k=1}^d \mathbb{R}^{r_{k-1}\times n_k\times r_k} \to T_{\bar{\mathcal{X}}}\overline{\mathcal{M}}_{\rr}\) associated with the metric \(\overline{g}^{\va}\) is given by
\begin{equation}\label{op1}
\psi_{\bar{\mathcal{X}}}(\bar{\zeta}) = \bigl( \zeta_1 - \xi_1,\; \dots,\; \zeta_{d-1} - \xi_{d-1},\; \zeta_d \bigr),
\end{equation}
where for each \(k\in[d-1]\),
\[
\xi_k = L^{-1}\!\bigl( L(\mathcal{X}_k) S_k (R^{k+1})^{-1} \bigr),
\]
and \(S_k\) is the unique solution of the Lyapunov equation
\begin{equation}\label{eq:lyap}
(R^{k+1})^{-1} S_k + S_k (R^{k+1})^{-1} = L(\mathcal{X}_k)^{\!\top} L(\zeta_k) + L(\zeta_k)^{\!\top} L(\mathcal{X}_k).
\end{equation}
\end{theorem}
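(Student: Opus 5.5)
The proof uses the standard characterization of an orthogonal projection. With respect to $\overline{g}^{\va}$, the projection of $\bar{\zeta}$ is the unique decomposition $\bar{\zeta} = \psi_{\bar{\mathcal{X}}}(\bar{\zeta}) + \bar{\xi}$ with $\psi_{\bar{\mathcal{X}}}(\bar{\zeta}) \in T_{\bar{\mathcal{X}}}\overline{\mathcal{M}}_{\rr}$ and $\bar{\xi} \in N_{\bar{\mathcal{X}}}\overline{\mathcal{M}}_{\rr}$. This decomposition exists and is unique because $\overline{g}^{\va}$ is an inner product on the ambient product space and $N_{\bar{\mathcal{X}}}\overline{\mathcal{M}}_{\rr}$ is by definition the orthogonal complement of the tangent space. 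So it suffices to exhibit a normal vector $\bar{\xi}$ such that $\bar{\zeta}-\bar{\xi}$ is tangent, and to show it is the one described in the statement.

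By Proposition~\ref{pro:normspace}, any normal vector has $\xi_d = 0$ and $L(\xi_k) = L(\mathcal{X}_k) S_k (R^{k+1})^{-1}$ for $k\in[d-1]$, with $S_k \in \operatorname{Sym}(r_k)$. Here we need $R^{k+1}$ symmetric positive definite, hence invertible. This holds because $\mathcal{X}_{\ge k+1,a_{k+1}}$ has full column rank $r_k$: for $a_{k+1}=0$ it is the identity, and otherwise full rank follows from $R(\mathcal{X}_j)$ having full row rank (membership in $\mathbb{R}_*$) together with the recursion analogous to \eqref{eq_inte}.

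Since $\xi_d=0$, the last component of the projection is $\zeta_d$, and Proposition~\ref{pro:tangent} imposes no constraint on it. For $k\in[d-1]$, Proposition~\ref{pro:tangent} requires
\[
L(\mathcal{X}_k)^{\!\top}\bigl(L(\zeta_k)-L(\xi_k)\bigr) + \bigl(L(\zeta_k)-L(\xi_k)\bigr)^{\!\top} L(\mathcal{X}_k) = 0.
\]
We substitute $L(\xi_k)$ and use $L(\mathcal{X}_k)^{\!\top}L(\mathcal{X}_k)=I_{r_k}$. Since $S_k$ and $(R^{k+1})^{-1}$ are symmetric, $L(\mathcal{X}_k)^{\!\top}L(\xi_k) = S_k(R^{k+1})^{-1}$ and its transpose is $(R^{k+1})^{-1}S_k$. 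The condition therefore becomes exactly the Lyapunov equation \eqref{eq:lyap}.

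The main step is existence and uniqueness of a \emph{symmetric} solution $S_k$. Write $A=(R^{k+1})^{-1}$, which is symmetric positive definite. The operator $S\mapsto AS+SA$ has eigenvalues $\lambda_i+\lambda_j>0$, where the $\lambda_i$ are the eigenvalues of $A$, so it is invertible on $\mathbb{R}^{r_k\times r_k}$. The right-hand side $C$ of \eqref{eq:lyap} is symmetric. If $S$ solves $AS+SA=C$, then transposing gives $AS^{\!\top}+S^{\!\top}A=C$, so $S^{\!\top}$ is also a solution and uniqueness gives $S^{\!\top}=S$. Hence the unique solution lies in $\operatorname{Sym}(r_k)$, and the corresponding $\bar{\xi}$ is normal by Proposition~\ref{pro:normspace}.

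The equations decouple across $k$, so each $S_k$ is determined independently, and the resulting $\bar{\zeta}-\bar{\xi}$ is tangent. By uniqueness of the orthogonal decomposition, $\psi_{\bar{\mathcal{X}}}(\bar{\zeta}) = \bar{\zeta}-\bar{\xi}$, which is \eqref{op1}.
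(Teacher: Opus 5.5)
Your proposal is correct and follows the same route as the paper. You subtract from $\bar{\zeta}$ a normal vector of the form given in Proposition~\ref{pro:normspace} and impose the tangency condition of Proposition~\ref{pro:tangent}, which yields \eqref{eq:lyap}; you also justify what the paper leaves implicit, namely positive definiteness of $R^{k+1}$ and existence, uniqueness and symmetry of $S_k$ from the spectrum $\lambda_i+\lambda_j>0$ of the Lyapunov operator.
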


\begin{proof}
Let \(\bar{\eta} = \psi_{\bar{\mathcal{X}}}(\bar{\zeta})\). By Proposition~\ref{pro:normspace}, we can write \(\eta_k = \zeta_k - \xi_k\) with \(\xi_k\) of the stated form. The condition that \(\bar{\eta}\) belongs to the tangent space, i.e. \(L(\mathcal{X}_k)^{\!\top}L(\eta_k) + L(\eta_k)^{\!\top}L(\mathcal{X}_k)=0_{r_k}\), then becomes
\[
L(\mathcal{X}_k)^{\!\top}L(\zeta_k) + L(\zeta_k)^{\!\top}L(\mathcal{X}_k) - (R^{k+1})^{-1}S_k - S_k(R^{k+1})^{-1} = 0_{r_k},
\]
which is exactly equation (\ref{eq:lyap}).
\end{proof}

\begin{remark}
When the parameter \(a_{k+1}=0\), we have \(R^{k+1}=I_{r_k}\) and the Lyapunov equation \eqref{eq:lyap} reduces to \(S_k = \frac{1}{2}\bigl(L(\mathcal{X}_k)^{\!\top}L(\zeta_k)+L(\zeta_k)^{\!\top}L(\mathcal{X}_k)\bigr)\). In this case the projection can be computed without solving a Lyapunov equation.
\end{remark}

\subsection{Vertical Space and Horizontal Space}
The tangent space to the equivalence class \([\bar{\mathcal{X}}]\) at \(\bar{\mathcal{X}}\) is called the vertical space at \(\bar{\mathcal{X}}\) and is denoted by \(\mathcal{V}_{\bar{\mathcal{X}}}\).

\begin{proposition}\label{pro:vcharacter}
The vertical space \(\mathcal{V}_{\bar{\mathcal{X}}}\) is given by
\[
\mathcal{V}_{\bar{\mathcal{X}}} = \bigl\{
\bigl(\mathcal{X}_1 \times_3 \Omega_1,\;
\mathcal{X}_2 \times_1 \Omega_1 + \mathcal{X}_2 \times_3 \Omega_2,\;
\dots,\;
\mathcal{X}_d \times_1 \Omega_{d-1}\bigr)
\; \big| \;
\Omega_k \in \operatorname{Skew}(r_k),\; k \in [d-1] \bigr\}.
\]
\end{proposition}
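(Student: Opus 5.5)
The plan is to identify the vertical space as the image of the differential of the group-action orbit map at the identity, and then to confirm that dimensions match.

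\textbf{Orbit map and the inclusion $\supseteq$.} Fix $\bar{\mathcal{X}}\in\overline{\mathcal{M}}_{\rr}$. By Proposition~\ref{tt-uniq} and \eqref{orth-group}, the equivalence class is the orbit
\[
[\bar{\mathcal{X}}]=\{\varphi_Q(\bar{\mathcal{X}}) : Q=(Q_1,\dots,Q_{d-1})\in \mathrm{O}(r_1)\times\cdots\times\mathrm{O}(r_{d-1})\}.
\]
Consider smooth curves $Q_k(t)=\exp(t\Omega_k)$ with $\Omega_k\in\operatorname{Skew}(r_k)$, so $Q_k(0)=I$ and $\dot Q_k(0)=\Omega_k$. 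The $k$th component of $\varphi_{Q(t)}(\bar{\mathcal{X}})$ is $\mathcal{X}_k\times_1 Q_{k-1}(t)\times_3 Q_k(t)$. Since the mode products are bilinear, differentiating at $t=0$ gives
\[
\mathcal{X}_k\times_1\Omega_{k-1}+\mathcal{X}_k\times_3\Omega_k,
\]
with the boundary terms absent for $k=1$ and $k=d$. Each such curve lies in $[\bar{\mathcal{X}}]\subset\overline{\mathcal{M}}_{\rr}$, so the displayed tuple belongs to $\mathcal{V}_{\bar{\mathcal{X}}}$.

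As a sanity check against Proposition~\ref{pro:tangent}, use \eqref{eq:Ltrans}: $L(\mathcal{X}_k\times_3\Omega_k)=L(\mathcal{X}_k)\Omega_k^\top$ contributes $\Omega_k^\top+\Omega_k=0$ to the tangent condition. The term $L(\mathcal{X}_k\times_1\Omega_{k-1})=(I\otimes\Omega_{k-1})L(\mathcal{X}_k)$ contributes $L(\mathcal{X}_k)^\top\bigl(I\otimes(\Omega_{k-1}+\Omega_{k-1}^\top)\bigr)L(\mathcal{X}_k)=0$. So the candidate vectors are indeed tangent.

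\textbf{The inclusion $\subseteq$.} For the reverse inclusion, I will use that the orthogonal group $G=\prod_k\mathrm{O}(r_k)$ acts smoothly on $\overline{\mathcal{M}}_{\rr}$ with orbits equal to the equivalence classes. The orbit through $\bar{\mathcal{X}}$ is then an immersed (indeed embedded, since $G$ is compact) submanifold. Its tangent space at $\bar{\mathcal{X}}$ is the image of the differential at the identity of $Q\mapsto\varphi_Q(\bar{\mathcal{X}})$, restricted to the Lie algebra $\prod_k\operatorname{Skew}(r_k)$. Equivalently, every tangent vector to the class is realized by a curve $Q(t)$ in $G$ with $Q(0)=I$, because the action is free, as argued next. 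The derivative computation above then shows that every vertical vector has the displayed form.

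\textbf{Freeness, the expected main obstacle.} The step requiring care is showing the action is free, which gives that the differential of the orbit map is injective and that the orbit map is a diffeomorphism onto the class. Suppose $\varphi_Q(\bar{\mathcal{X}})=\bar{\mathcal{X}}$. Then $L(\mathcal{X}_1)Q_1^\top=L(\mathcal{X}_1)$ with $L(\mathcal{X}_1)$ of full column rank, so $Q_1=I$. Inductively, $(I\otimes Q_{k-1})L(\mathcal{X}_k)Q_k^\top=L(\mathcal{X}_k)$ with $Q_{k-1}=I$ and full column rank forces $Q_k=I$.

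The same induction at the infinitesimal level handles injectivity of the differential. If the derived tuple vanishes, then $L(\mathcal{X}_1)\Omega_1^\top=0$ gives $\Omega_1=0$. Then $L(\mathcal{X}_k)\Omega_k^\top=-(I\otimes\Omega_{k-1})L(\mathcal{X}_k)=0$ gives $\Omega_k=0$, and so on.

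Hence $\dim\mathcal{V}_{\bar{\mathcal{X}}}=\sum_{k=1}^{d-1}r_k(r_k-1)/2$, and the linear image described in the statement has exactly this dimension. This dimension match closes the argument.
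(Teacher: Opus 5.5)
Your proof is correct and uses the same computation as the paper: differentiate $t\mapsto\varphi_{Q(t)}(\bar{\mathcal{X}})$ at $t=0$, with each $\dot Q_k(0)$ skew-symmetric. The paper simply asserts that every curve in the class through $\bar{\mathcal{X}}$ comes from a smooth curve $Q(t)$ in $\prod_k \mathrm{O}(r_k)$ with $Q(0)=I$; your appeal to compactness of the group (so the orbit is embedded and the orbit map has constant rank), together with freeness, supplies that justification, and your exponential curves make the reverse inclusion explicit.
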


\begin{proof}
Consider a curve \(t \mapsto \bar{\mathcal{X}}(t)\) in the equivalence class \([\bar{\mathcal{X}}]\) passing through \(\bar{\mathcal{X}}\) at \(t=0\). By the characterization of the equivalence relation, there exist smooth curves \(Q_k(t) \in \mathrm{O}(r_k)\) with \(Q_k(0)=I_{r_k}\) such that
\[
\bar{\mathcal{X}}(t) = \bigl(\mathcal{X}_1 \times_3 Q_1(t),\;
\mathcal{X}_2 \times_1 Q_1(t) \times_3 Q_2(t),\;
\dots,\;
\mathcal{X}_d \times_1 Q_{d-1}(t)\bigr).
\]
Differentiating the \(k\)-th core at \(t=0\) gives
\[
\dot{\mathcal{X}}_k(0) = \mathcal{X}_k \times_1 \dot{Q}_{k-1}(0) + \mathcal{X}_k \times_3 \dot{Q}_k(0).
\]
 Differentiating the orthogonality condition \(Q_k(t)Q_k(t)^{\!\top}=I_{r_k}\) at \(t=0\) yields \(\dot{Q}_k(0) + \dot{Q}_k(0)^{\!\top}=0_{r_k}\); hence \(\dot{Q}_k(0)\) is skew-symmetric. Writing \(\Omega_k = \dot{Q}_k(0)\) completes the proof.
\end{proof}

The horizontal space \(\mathcal{H}_{\bar{\mathcal{X}}}\) is defined as the orthogonal complement of the vertical space within the tangent space \(T_{\bar{\mathcal{X}}}\overline{\mathcal{M}}_{\rr}\) with respect to the metric \(\overline{g}^{\va}\).

For a tangent vector \(\xi \in T_{[\bar{\mathcal{X}}]}\mathcal{M}_{\rr}\) on the quotient manifold, any element \(\bar{\xi}_{\bar{\mathcal{X}}} \in T_{\bar{\mathcal{X}}}\overline{\mathcal{M}}_{\rr}\) satisfying \(D\pi(\bar{\mathcal{X}})[\bar{\xi}_{\bar{\mathcal{X}}}] = \xi\) is called a lift of \(\xi\). Among all such lifts, the unique one that belongs to the horizontal space is termed the horizontal lift.

\begin{proposition}\label{hcharacter}
With respect to the metric \(\overline{g}^{\va}\), the horizontal space admits the characterization
\[
\begin{aligned}
\mathcal{H}_{\bar{\mathcal{X}}} = \bigl\{ &\bar{\xi}=(\xi_1,\dots,\xi_d) \in T_{\bar{\mathcal{X}}}\overline{\mathcal{M}}_{\rr} \; \big| \; L(\mathcal{X}_k)^{\!\top} L(\xi_k) R^{k+1} \\
&- R(\xi_{k+1}) (R^{k+2} \otimes I_{n_{k+1}}) R(\mathcal{X}_{k+1})^{\!\top}
\in \operatorname{Sym}(r_k),\; k \in [d-1] \bigr\}.
\end{aligned}
\]
\end{proposition}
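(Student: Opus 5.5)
The plan is to characterize $\mathcal{H}_{\bar{\mathcal{X}}}$ directly from its definition: $\bar{\xi}\in T_{\bar{\mathcal{X}}}\overline{\mathcal{M}}_{\rr}$ is horizontal iff $\overline{g}^{\va}_{\bar{\mathcal{X}}}(\bar{\xi},\bar{v})=0$ for every vertical $\bar{v}$ given by Proposition~\ref{pro:vcharacter}. By linearity in $(\Omega_1,\dots,\Omega_{d-1})$, I will fix $k\in[d-1]$, take $\Omega_j=0$ for $j\neq k$, and obtain the condition one index at a time. In that case $\bar v$ has only two nonzero cores: $v_k=\mathcal{X}_k\times_3\Omega_k$ and $v_{k+1}=\mathcal{X}_{k+1}\times_1\Omega_k$.

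Next I will evaluate the metric using the expression of Theorem~\ref{lem:transmetric}. By \eqref{eq:Ltrans}, $L(v_k)=L(\mathcal{X}_k)\Omega_k^{\!\top}$, so the $k$th term is $\langle L(\xi_k)R^{k+1},L(\mathcal{X}_k)\Omega_k^{\!\top}\rangle=\operatorname{tr}\bigl(\Omega_k\,L(\mathcal{X}_k)^{\!\top}L(\xi_k)R^{k+1}\bigr)$. For the $(k+1)$th term I will pass from left to right unfoldings. The key identity is
\[
\langle L(\xi_{k+1})R^{k+2},L(\eta)\rangle=\langle R(\xi_{k+1})(R^{k+2}\otimes I_{n_{k+1}}),R(\eta)\rangle,
\]
which holds because right-multiplying the left unfolding by $R^{k+2}$ corresponds to a mode-3 product, and \eqref{eq:Rtrans} converts this into right multiplication of $R(\cdot)$ by $(R^{k+2}\otimes I)^{\!\top}$; here $R^{k+2}$ is symmetric. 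With $R(v_{k+1})=\Omega_kR(\mathcal{X}_{k+1})$ from \eqref{eq:Rtrans}, this term becomes $\operatorname{tr}\bigl(\Omega_k^{\!\top}R(\xi_{k+1})(R^{k+2}\otimes I)R(\mathcal{X}_{k+1})^{\!\top}\bigr)$.

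Adding the two terms and using $\Omega_k^{\!\top}=-\Omega_k$, the orthogonality condition becomes $\langle M_k,\Omega_k\rangle=0$ for all $\Omega_k\in\operatorname{Skew}(r_k)$, where
\[
M_k=L(\mathcal{X}_k)^{\!\top}L(\xi_k)R^{k+1}-R(\xi_{k+1})(R^{k+2}\otimes I_{n_{k+1}})R(\mathcal{X}_{k+1})^{\!\top}.
\]
Since $\operatorname{Sym}(r_k)$ is the orthogonal complement of $\operatorname{Skew}(r_k)$ under the Frobenius inner product, this holds iff $M_k$ is symmetric. Conversely, if every $M_k$ is symmetric, then summing over $k$ shows orthogonality to every vertical vector, which gives the stated characterization.

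The step I expect to be the main obstacle is the bookkeeping in the $(k+1)$th term. Three things need checking: that the right-unfolding index ordering matches $R^{k+2}\otimes I_{n_{k+1}}$ rather than $I\otimes R^{k+2}$, which follows from the ordering in \eqref{eq:Rtrans}; that the boundary case $k=d-1$ uses $R^{d+1}=1$; and that the transposes and signs are placed correctly. For these I will rely on \eqref{eq:Ltrans} and \eqref{eq:Rtrans} rather than computing indices by hand.
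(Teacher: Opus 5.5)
Your proposal is correct and follows essentially the same route as the paper: pair $\bar{\xi}$ with a general vertical vector using \eqref{eq:transmetric}, collect the coefficient of each $\Omega_k$, and conclude from $\operatorname{Sym}(r_k)\perp\operatorname{Skew}(r_k)$. The only cosmetic difference is in the $\mathcal{X}_{k+1}\times_1\Omega_k$ term. You convert $\langle L(\xi_{k+1})R^{k+2},L(\cdot)\rangle$ directly to right unfoldings via \eqref{eq:Rtrans}. The paper instead returns to the $\Phi$-form, shifts $\Omega_k$ onto the $k$th core, and uses $\mathcal{X}_{\le k}^{\top}\mathcal{X}_{\le k}=I$; both give the same expression.
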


\begin{proof}
Let \(\bar{\xi} \in \mathcal{H}_{\bar{\mathcal{X}}}\) and take an arbitrary vertical vector
\[
\bar{\eta} = \bigl(\mathcal{X}_1 \times_3 \Omega_1,\; \mathcal{X}_2 \times_1 \Omega_1 + \mathcal{X}_2 \times_3 \Omega_2,\; \dots,\; \mathcal{X}_d \times_1 \Omega_{d-1}\bigr)
\]
with \(\Omega_k\) skew-symmetric. Orthogonality \(\overline{g}^{\va}_{\bar{\mathcal{X}}}(\bar{\xi},\bar{\eta})=0\) must hold. Using the expression (\ref{eq:transmetric}) for the metric, we compute for each \(k\)
\begin{equation*}
		\begin{aligned}
			&\langle \Phi( \mathcal{X}_1,...,\xi_k,...,\mathcal{X}_{k+a_{k+1}}),\Phi( \mathcal{X}_1,...,\mathcal{X}_k\times _1\Omega_{k-1},...,\mathcal{X}_{k+a_{k+1}})\rangle
		\\&=	\langle \Phi( \mathcal{X}_1,...,\xi_k,...,\mathcal{X}_{k+a_{k+1}}),\Phi( \mathcal{X}_1,...,\mathcal{X}_{k-1}\times _3\Omega_{k-1}^{\!\top},...,\mathcal{X}_{k+a_{k+1}})\rangle
		\\&=\langle \mathcal{X}_{\leq k-1}R(\xi_k)(\mathcal{X}_{\geq k+1,a_{k+1}}^{\!\top}\otimes I_{n_k}), \mathcal{X}_{\leq k-1}\Omega_{k-1}R(\mathcal{X}_k)(\mathcal{X}_{\geq k+1,a_{k+1}}^{\!\top}\otimes I_{n_k})\rangle
		\\&=\langle R(\xi_k)(R^{k+1}\otimes I_{n_k}) R(\mathcal{X}_k)^{\!\top}, \Omega_{k-1}\rangle ,
		\end{aligned}
	\end{equation*} and
	\begin{equation*}
		\begin{aligned}
			&\langle \Phi( \mathcal{X}_1,...,\xi_k,...,\mathcal{X}_{k+a_{k+1}}),\Phi( \mathcal{X}_1,...,\mathcal{X}_k\times _3\Omega_k,...,\mathcal{X}_{k+a_{k+1}})\rangle
		\\&=	\langle (I_{n_k}\otimes \mathcal{X}_{\leq k-1})L(\xi_k)\mathcal{X}_{\geq k+1,a_{k+1}}^{\!\top}, (I_{n_k}\otimes \mathcal{X}_{\leq k-1})L(\mathcal{X}_k)\Omega_k^{\!\top}\mathcal{X}_{\geq k+1,a_{k+1}}^{\!\top}\rangle
		\\&=-\langle L(\mathcal{X}_k)^{\!\top}L(\xi_k)R^{k+1},\Omega_k \rangle.
		\end{aligned}
	\end{equation*}
	Summing over \(k\) and using the skew-symmetry of \(\Omega_k\) gives
\begin{align*}
       \overline{g}_{\bar{\mathcal{X}}}^{\va}(\overline{\xi}, \overline{\eta})
        &= \sum_{k=1}^{d-1} \Bigl( \bigl\langle R(\xi_{k+1}) (R^{k+2} \otimes I_{n_{k+1}}) R(\mathcal{X}_{k+1})^{\!\top}, \; \Omega_{k} \bigr\rangle - \bigl\langle L(\mathcal{X}_k)^{\!\top} L(\xi_k) R^{k+1}, \; \Omega_k \bigr\rangle \Bigr) \\
        &= \sum_{k=1}^{d-1} \bigl\langle R(\xi_{k+1}) (R^{k+2} \otimes I_{n_{k+1}}) R(\mathcal{X}_{k+1})^{\!\top} - L(\mathcal{X}_k)^{\!\top} L(\xi_k) R^{k+1}, \; \Omega_k \bigr\rangle .
\end{align*}
Since the \(\Omega_k\) are arbitrary skew-symmetric matrices, the matrix inside the inner product must be symmetric for each \(k\), which yields the stated condition.
\end{proof}

To compute the horizontal projection explicitly we solve a coupled system of Lyapunov equations, analogous to those appearing in \cite[Proposition 3]{kasai2016low}. Introducing a suitable matrix \(\Delta_r\) reduces the problem to a linear system with a block-tridiagonal coefficient matrix. This formulation gives a solution to the linear system and can be used to analyze the projection's properties.

Define the matrix \(\Delta_r \in \mathbb{R}^{\frac{r(r-1)}{2} \times r^2}\) as follows: for each pair \((i,j)\) with \(i>j\) and \(i,j\in[r]\), set
\begin{align*}
    &\Delta_r\!\Bigl(\frac{(i-1)(i-2)}{2}+j,\; i+r(j-1)\Bigr) = 1,\\
    &\Delta_r\!\Bigl(\frac{(i-1)(i-2)}{2}+j,\; j+r(i-1)\Bigr) = -1,
\end{align*}
and all other entries zero. From the construction, \(\Delta_r\) has full row rank.

\begin{lemma}\label{ldelta}
Let \(A \in \mathbb{R}^{r\times r}\) and  \(\mathbf{c}\) be the column vector obtained by stacking the entries of \(A-A^{\!\top}\) for indices \(i>j\) in column-major order. Then \(\mathbf{c} = \Delta_r \operatorname{vec}(A)\).
\end{lemma}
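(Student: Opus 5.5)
This is a direct entrywise check: compute both sides of $\mathbf{c} = \Delta_r \operatorname{vec}(A)$ coordinate by coordinate and compare. The plan has three steps.

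\emph{Step 1: the index convention for $\operatorname{vec}$.} Column-major vectorization gives $\operatorname{vec}(A)_{p+r(q-1)} = A(p,q)$ for $p,q\in[r]$.

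\emph{Step 2: the ordering of the pairs $(i,j)$ with $i>j$.} These pairs are listed in the order used to define the rows of $\Delta_r$. Row $\tfrac{(i-1)(i-2)}{2}+j$ corresponds to the pair $(i,j)$, and this row index ranges over $1,\dots,\tfrac{r(r-1)}{2}$ as $(i,j)$ runs over the strictly lower-triangular positions. I will check that the map $(i,j)\mapsto \tfrac{(i-1)(i-2)}{2}+j$ is a bijection onto $[\tfrac{r(r-1)}{2}]$. For fixed $i$, with $j=1,\dots,i-1$, the rows fill the block $\tfrac{(i-1)(i-2)}{2}+1,\dots,\tfrac{i(i-1)}{2}$. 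These blocks are consecutive and disjoint, so the map is a bijection.

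I will take this ordering as the meaning of ``stacking the entries of $A-A^{\!\top}$ for indices $i>j$'': the $\ell$-th entry of $\mathbf{c}$ is $(A-A^{\!\top})(i,j)$, where $(i,j)$ is the pair with $\ell = \tfrac{(i-1)(i-2)}{2}+j$. The only possible obstacle is reconciling the phrase ``column-major order'' with this row indexing, which actually enumerates the lower-triangular entries row by row. I will resolve it by adopting the row indexing of $\Delta_r$ as the definition of the ordering of $\mathbf{c}$. Since the statement is an identity between entries labelled by the pairs $(i,j)$, nothing substantive depends on the ordering, provided the same ordering is used on both sides.

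\emph{Step 3: the entrywise comparison.} Fix a row $\ell$ corresponding to the pair $(i,j)$ with $i>j$. By construction, that row of $\Delta_r$ has exactly two nonzero entries: $+1$ in column $i+r(j-1)$ and $-1$ in column $j+r(i-1)$. These columns are distinct because $i\neq j$. Therefore
\[
(\Delta_r \operatorname{vec}(A))_\ell = \operatorname{vec}(A)_{i+r(j-1)} - \operatorname{vec}(A)_{j+r(i-1)} = A(i,j) - A(j,i) = (A-A^{\!\top})(i,j) = \mathbf{c}_\ell .
\]
Since this holds for every row $\ell$, it follows that $\mathbf{c} = \Delta_r\operatorname{vec}(A)$.
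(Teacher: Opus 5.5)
Your proof is correct and follows the paper's argument: an entrywise comparison showing that row $\frac{(i-1)(i-2)}{2}+j$ of $\Delta_r\operatorname{vec}(A)$ equals $A(i,j)-A(j,i)$. Your remark about the ordering is also right: this indexing enumerates the strictly lower triangle row by row, so ``column-major'' is a misnomer once $r\ge 4$. The paper's proof silently uses the same $\Delta_r$ ordering, and Lemma~\ref{rdelta} uses it as well, so nothing downstream is affected.
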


\begin{proof}
Let \(C = A - A^{\!\top}\). The entry \(C_{ij}\) with \(i>j\) is located at position \(\frac{(i-1)(i-2)}{2}+j\) in \(\mathbf{c}\). In the vectorized representation, the entry \(A_{ij}\) corresponds to the \((i+r(j-1))\)-th element of \(\operatorname{vec}(A)\), while \(A_{ji}\) corresponds to the \((j+r(i-1))\)-th element. Hence
\[
\begin{aligned}
\mathbf{c}\!\Bigl(\frac{(i-1)(i-2)}{2}+j\Bigr)
&= \operatorname{vec}(A)_{i+r(j-1)} - \operatorname{vec}(A)_{j+r(i-1)} \\
&= \Delta_r\!\Bigl(\frac{(i-1)(i-2)}{2}+j,\;:\Bigr) \operatorname{vec}(A),
\end{aligned}
\]
which is exactly the statement \(\mathbf{c} = \Delta_r \operatorname{vec}(A)\).
\end{proof}

\begin{lemma}\label{rdelta}
Let \(\Omega \in \mathbb{R}^{r\times r}\) be skew-symmetric and  \(\omega\) be the column vector of its strictly lower-triangular entries taken in column-major order. Then \(\operatorname{vec}(\Omega) = \Delta_r^{\!\top} \omega\).
\end{lemma}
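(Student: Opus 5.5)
The plan is to compare the two sides of $\operatorname{vec}(\Omega) = \Delta_r^{\!\top}\omega$ entry by entry, using only the explicit definition of $\Delta_r$. Because $\Delta_r$ is specified row by row, its transpose $\Delta_r^{\!\top}\in\mathbb{R}^{r^2\times \frac{r(r-1)}{2}}$ has a simple column description. Column $\frac{(i-1)(i-2)}{2}+j$, for a pair $i>j$, has exactly two nonzero entries: $+1$ at position $i+r(j-1)$ and $-1$ at position $j+r(i-1)$. Hence
\[
\Delta_r^{\!\top}\omega=\sum_{i>j}\omega\!\Bigl(\tfrac{(i-1)(i-2)}{2}+j\Bigr)\bigl(\mathbf{e}_{i+r(j-1)}-\mathbf{e}_{j+r(i-1)}\bigr),
\]
where $\mathbf{e}_p$ denotes the $p$th standard basis vector of $\mathbb{R}^{r^2}$.

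Next, I will use the same indexing convention as Lemma~\ref{ldelta}: the entry $\Omega_{ij}$ with $i>j$ sits at position $\frac{(i-1)(i-2)}{2}+j$ of $\omega$. With this, each summand above equals $\Omega_{ij}\operatorname{vec}(E_{ij})-\Omega_{ij}\operatorname{vec}(E_{ji})$, where $E_{ij}$ is the matrix unit. This works because $\operatorname{vec}(E_{ij})=\mathbf{e}_{i+r(j-1)}$ under column-major vectorization.

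Skew-symmetry gives $-\Omega_{ij}=\Omega_{ji}$ and $\Omega_{ii}=0$. The sum therefore becomes
\[
\sum_{i>j}\bigl(\Omega_{ij}\operatorname{vec}(E_{ij})+\Omega_{ji}\operatorname{vec}(E_{ji})\bigr)+\sum_i \Omega_{ii}\operatorname{vec}(E_{ii})=\operatorname{vec}(\Omega).
\]

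The sum is well defined because the positions $i+r(j-1)$ and $j+r(i-1)$, taken over all pairs with $i>j$, are pairwise distinct and avoid the diagonal positions. So no entry of $\operatorname{vec}(\Omega)$ is counted twice.

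The only delicate point is the index bookkeeping. One has to confirm that the map $(i,j)\mapsto\frac{(i-1)(i-2)}{2}+j$ is a bijection from $\{i>j\}$ onto $[\frac{r(r-1)}{2}]$, which the paper uses implicitly in Lemma~\ref{ldelta}. One also has to read $\omega$ with the same ordering. Once that convention is fixed, the identity is a direct computation.
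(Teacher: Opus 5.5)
Your proof is correct and is essentially the paper's argument: the paper simply notes that $\Delta_r^{\top}\omega$ places $\omega$ in the strictly lower triangle, $-\omega$ in the mirrored upper positions and zeros on the diagonal, and you spell this out column by column of $\Delta_r^{\top}$. Your caveat about the ordering of $\omega$ is more than a formality. The index $\frac{(i-1)(i-2)}{2}+j$ lists the lower triangle row by row, which agrees with column-major order only for $r\le 3$ (for $r=4$, $\Omega_{41}$ gets index $4$ rather than $3$), so the lemma holds exactly with the $\Delta_r$-induced ordering you adopt and not with the literal ``column-major'' wording.
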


\begin{proof}
The proof follows directly from the definition of \(\Delta_r\): the matrix \(\Delta_r^{\!\top}\) maps a vector \(\omega\) of length \(\frac{r(r-1)}{2}\) to a vectorized \(r\times r\) matrix whose strictly lower-triangular part equals \(\omega\), the strictly uppertriangular part equals \(-\omega\), and the diagonal is zero, i.e., exactly to \(\operatorname{vec}(\Omega)\).
\end{proof}

With these lemmas we now give an explicit formula for the horizontal projection.

\begin{theorem}\label{hproj}
The orthogonal projection onto the horizontal space \(\mathcal{H}_{\bar{\mathcal{X}}}\) with respect to the metric \(\overline{g}^{\va}\) is given by
\[
\phi_{\bar{\mathcal{X}}}(\bar{\xi}) =
\bigl( \xi_1 - \mathcal{X}_1 \times_3 \Omega_1,\;
\xi_2 - \mathcal{X}_2 \times_1 \Omega_1 - \mathcal{X}_2 \times_3 \Omega_2,\;
\dots,\;
\xi_d - \mathcal{X}_d \times_1 \Omega_{d-1} \bigr),
\]
where \(\bar{\xi}=(\xi_1,\dots,\xi_d) \in T_{\bar{\mathcal{X}}}\overline{\mathcal{M}}_{\rr}\) and \(\Omega_k\) are skew-symmetric matrices determined as follows. Let \(\omega_k\) be the column vector of the strictly lower-triangular entries of \(\Omega_k\) in column-major order. Then the vector \(\omega = (\omega_1,\dots,\omega_{d-1})\) satisfies the linear system
\begin{equation}\label{eq:hmetrix}
\begin{bmatrix}
A_1 & B_1 & & \\
B_1^{\!\top} & A_2 & B_2 & \\
& \ddots & \ddots & \ddots \\
& & B_{d-2}^{\!\top} & A_{d-1}
\end{bmatrix}
\begin{bmatrix}
\omega_1 \\ \omega_2 \\ \vdots \\ \omega_{d-1}
\end{bmatrix}
=
\begin{bmatrix}
\mathbf{c}_1 \\ \mathbf{c}_2 \\ \vdots \\ \mathbf{c}_{d-1}
\end{bmatrix},
\end{equation}
with blocks
\begin{align*}
A_k &= \Delta_{r_k} \bigl( (R^{k+1} + R(\mathcal{X}_{k+1})(R^{k+2} \otimes I_{n_{k+1}}) R(\mathcal{X}_{k+1})^{\!\top}) \otimes I_{r_k} \bigr) \Delta_{r_k}^{\!\top}, \\
B_k &= -\Delta_{r_k} \bigl( R(\mathcal{X}_{k+1}) \otimes I_{r_k} \bigr) \bigl( R^{k+2} \otimes L(\mathcal{X}_{k+1}) \bigr) \Delta_{r_{k+1}}^{\!\top}, \\
\mathbf{c}_k &= \Delta_{r_k} \bigl( -(R^{k+1} \otimes L(\mathcal{X}_k)^{\!\top}) \operatorname{vec}(\xi_k)
+ (R(\mathcal{X}_{k+1}) \otimes I_{r_k}) \operatorname{vec}(\xi_{k+1} \times_3 R^{k+2}) \bigr).
\end{align*}
\end{theorem}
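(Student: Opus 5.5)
The proof will use the standard characterization of an orthogonal projection: $\phi_{\bar{\mathcal{X}}}(\bar\xi)$ is the unique element of $\mathcal{H}_{\bar{\mathcal{X}}}$ with $\bar\xi-\phi_{\bar{\mathcal{X}}}(\bar\xi)\in\mathcal{V}_{\bar{\mathcal{X}}}$.

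\emph{Form of the projection.} By Proposition~\ref{pro:vcharacter}, the vertical part must have the form $(\mathcal{X}_1\times_3\Omega_1,\ \mathcal{X}_2\times_1\Omega_1+\mathcal{X}_2\times_3\Omega_2,\dots)$ with $\Omega_k\in\operatorname{Skew}(r_k)$. This gives the stated form of $\phi_{\bar{\mathcal{X}}}$. The resulting vector $\bar\eta$ is automatically tangent, because vertical vectors lie in $T_{\bar{\mathcal{X}}}\overline{\mathcal{M}}_{\rr}$. It then remains to impose the horizontality condition of Proposition~\ref{hcharacter} on $\bar\eta$. For each $k\in[d-1]$ this condition reads
\[
M_k(\bar\eta):=L(\mathcal{X}_k)^{\top}L(\eta_k)R^{k+1}-R(\eta_{k+1})(R^{k+2}\otimes I_{n_{k+1}})R(\mathcal{X}_{k+1})^{\top}\in\operatorname{Sym}(r_k).
\]
Since $\eta_k$ depends only on $\Omega_{k-1},\Omega_k$ and $\eta_{k+1}$ only on $\Omega_k,\Omega_{k+1}$, the quantity $M_k$ involves only $\Omega_{k-1},\Omega_k,\Omega_{k+1}$. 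This locality is what produces the block-tridiagonal structure.

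\emph{Expanding $M_k$.} Substitute $\eta_k=\xi_k-\mathcal{X}_k\times_1\Omega_{k-1}-\mathcal{X}_k\times_3\Omega_k$ and use \eqref{eq:Ltrans}, \eqref{eq:Rtrans} together with $L(\mathcal{X}_k)^{\top}L(\mathcal{X}_k)=I$.

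From the first term:
\begin{itemize}
\item $L(\mathcal{X}_k\times_3\Omega_k)=L(\mathcal{X}_k)\Omega_k^{\top}$ contributes $-\Omega_k^{\top}R^{k+1}=\Omega_kR^{k+1}$.
\item The $\Omega_{k-1}$ term contributes $-L(\mathcal{X}_k)^{\top}(I\otimes\Omega_{k-1})L(\mathcal{X}_k)R^{k+1}$.
\end{itemize}
The second of these does not appear in $A_k$ or $B_{k-1}$, so I will check whether its antisymmetric part cancels. If it does not, I will recheck the index bookkeeping of Proposition~\ref{hcharacter}; the stated theorem suggests that only $\Omega_k$ and $\Omega_{k+1}$ couple into row $k$ through $B_k$, with $B_{k-1}^{\top}$ arising by symmetry of the system.

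From the second term:
\begin{itemize}
\item $R(\mathcal{X}_{k+1}\times_1\Omega_k)=\Omega_kR(\mathcal{X}_{k+1})$ contributes $+\Omega_kP_k$, where $P_k=R(\mathcal{X}_{k+1})(R^{k+2}\otimes I)R(\mathcal{X}_{k+1})^{\top}$ is symmetric.
\item The $\Omega_{k+1}$ term contributes $R(\mathcal{X}_{k+1})(\Omega_{k+1}^{\top}R^{k+2}\otimes I)R(\mathcal{X}_{k+1})^{\top}$, up to the exact Kronecker ordering.
\end{itemize}

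\emph{Vectorizing.} Applying $\Delta_{r_k}$ extracts the antisymmetric part, by Lemma~\ref{ldelta}. The condition becomes $\Delta_{r_k}\operatorname{vec}(M_k)=0$. By Lemma~\ref{rdelta}, each $\operatorname{vec}(\Omega_j)$ is replaced by $\Delta_{r_j}^{\top}\omega_j$.
\begin{itemize}
\item Using $\operatorname{vec}(\Omega R)=(R^{\top}\otimes I)\operatorname{vec}\Omega$, the diagonal terms $\Omega_k(R^{k+1}+P_k)$ give $A_k\omega_k$.
\item The $\Omega_{k+1}$ term, written via $\operatorname{vec}(AXB)=(B^{\top}\otimes A)\operatorname{vec}X$ and the reshaping identity relating $R(\cdot)$ and $L(\cdot)$ of $\mathcal{X}_{k+1}$, gives $B_k\omega_{k+1}$.
\item The $\xi$-dependent terms move to the right-hand side as $\mathbf{c}_k$.
\end{itemize}
The sign conventions should be fixed so that the matrix is symmetric; this symmetry is guaranteed abstractly, because the system is the normal equation $\overline g^{\va}(\text{vertical}(\omega),\text{vertical}(\omega'))=\overline g^{\va}(\bar\xi,\text{vertical}(\omega'))$. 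To organize the argument cleanly, I will in fact derive the system from this Gram-matrix viewpoint: the coefficient matrix is the Gram matrix of the vertical basis, which gives symmetry for free and identifies the $(k+1,k)$ block as $B_k^{\top}$.

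\emph{Uniqueness.} The Gram matrix is positive definite, because the map $\omega\mapsto$ vertical vector is injective. Injectivity holds since $\mathcal{X}_1\times_3\Omega_1=0$ forces $\Omega_1=0$ by full column rank of $L(\mathcal{X}_1)$, and the same argument applies inductively. Hence the system has a unique solution.

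The main obstacle is the Kronecker/vec bookkeeping for the off-diagonal block $B_k$: matching the unfoldings $R(\mathcal{X}_{k+1})$ and $L(\mathcal{X}_{k+1})$, with $R^{k+2}$ placed in the right factor, to the exact stated expression, and confirming that no $\Omega_{k-1}$ contribution survives outside the tridiagonal pattern.
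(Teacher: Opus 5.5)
Your plan is correct, and its core matches the paper's proof. You write $\phi_{\bar{\mathcal{X}}}(\bar\xi)=\bar\xi-V(\omega)$, where $V(\omega)$ is the vertical vector built from $\omega$ via Proposition~\ref{pro:vcharacter}. You then impose the symmetry condition of Proposition~\ref{hcharacter}, vectorize, and apply $\Delta_{r_k}$ together with Lemmas~\ref{ldelta} and~\ref{rdelta}. Your expansions of the $\Omega_k$ and $\Omega_{k+1}$ terms are right, including the signs.

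One misreading needs correcting. The $\Omega_{k-1}$ contribution $-L(\mathcal{X}_k)^{\top}(I_{n_k}\otimes\Omega_{k-1})L(\mathcal{X}_k)R^{k+1}$ does not cancel, and it is not supposed to. Row $k$ of the stated system is $B_{k-1}^{\top}\omega_{k-1}+A_k\omega_k+B_k\omega_{k+1}=\mathbf{c}_k$, so $\Omega_{k-1}$ does enter row $k$. Use $\operatorname{vec}\bigl((I_{n_k}\otimes\Omega_{k-1})L(\mathcal{X}_k)\bigr)=\operatorname{vec}\bigl(\Omega_{k-1}R(\mathcal{X}_k)\bigr)=(R(\mathcal{X}_k)^{\top}\otimes I_{r_{k-1}})\operatorname{vec}(\Omega_{k-1})$. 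The term then becomes $-\Delta_{r_k}(R^{k+1}\otimes L(\mathcal{X}_k)^{\top})(R(\mathcal{X}_k)^{\top}\otimes I_{r_{k-1}})\Delta_{r_{k-1}}^{\top}\omega_{k-1}$. Since $R^{k+1}$ is symmetric, this is exactly $B_{k-1}^{\top}\omega_{k-1}$. The paper obtains the subdiagonal blocks this way, by direct computation, so there is no index bookkeeping to recheck.

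Your Gram-matrix route is a legitimate alternative for that block. The computation in the proof of Proposition~\ref{hcharacter} works for any $\bar\zeta$ and gives $\overline{g}^{\va}_{\bar{\mathcal{X}}}(\bar\zeta,V(\omega'))=-\sum_{k}(\omega_k')^{\top}\Delta_{r_k}\operatorname{vec}\bigl(M_k(\bar\zeta)\bigr)$. So your normal equations are exactly the conditions $\Delta_{r_k}\operatorname{vec}\bigl(M_k(\bar\xi-V(\omega))\bigr)=0$. The coefficient matrix is the Gram matrix itself, not its negative, and the right-hand side is $\mathbf{c}_k$.

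This route buys you two things:
\begin{itemize}
\item You only need to compute $A_k$, $B_k$ and $\mathbf{c}_k$; the lower blocks follow from symmetry.
\item You get positive definiteness, hence unique solvability, from the injectivity of $\omega\mapsto V(\omega)$. The paper never states this, and positive definiteness of each $A_k$ alone would not give it.
\end{itemize}
The paper's direct computation, in turn, displays all three couplings explicitly, so it checks the symmetry of the system rather than inferring it.
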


\begin{proof}
From Proposition~\ref{hcharacter}, the projected vector \(\phi_{\bar{\mathcal{X}}}(\bar{\xi}) = \bar{\xi} - \bar{\eta}\) with \(\bar{\eta} \in \mathcal{V}_{\bar{\mathcal{X}}}\) satisfies that for each \(k\in[d-1]\) the matrix
\[
D_k := L(\mathcal{X}_k)^{\!\top} L(\xi_k-\eta_k) R^{k+1}
- R(\xi_{k+1}-\eta_{k+1}) (R^{k+2} \otimes I_{n_{k+1}}) R(\mathcal{X}_{k+1})^{\!\top}
\]
is symmetric. Substituting the expression for \(\eta_k\) (see Proposition~\ref{pro:vcharacter}) and using the unfolding relations, vectorization gives
\begin{align*}
\operatorname{vec}(D_k) ={}&
\bigl( (R^{k+1}+R(\mathcal{X}_{k+1})(R^{k+2}\otimes I_{n_{k+1}})R(\mathcal{X}_{k+1})^{\!\top}) \otimes I_{r_k} \bigr) \operatorname{vec}(\Omega_k) \\
& - (R^{k+1}\otimes L(\mathcal{X}_k)^{\!\top}) (R(\mathcal{X}_k)^{\!\top}\otimes I_{r_{k-1}}) \operatorname{vec}(\Omega_{k-1}) \\
& - (R(\mathcal{X}_{k+1})\otimes I_{r_k}) (R^{k+2}\otimes L(\mathcal{X}_{k+1})) \operatorname{vec}(\Omega_{k+1}) \\
& + (R^{k+1}\otimes L(\mathcal{X}_k)^{\!\top}) \operatorname{vec}(\xi_k)
- (R(\mathcal{X}_{k+1})\otimes I_{r_k}) \operatorname{vec}(\xi_{k+1}\times_3 R^{k+2}).
\end{align*}
The symmetry condition \(D_k - D_k^{\!\top}=0\) is equivalent, via Lemma~\ref{ldelta}, to \(\Delta_{r_k} \operatorname{vec}(D_k)=0\). Applying Lemmas~\ref{ldelta} and \ref{rdelta} then yields
\[
A_k \omega_k + B_{k-1}^{\!\top} \omega_{k-1} + B_k \omega_{k+1} = \mathbf{c}_k,
\]
with the convention \(\omega_0=\omega_d=0\). Collecting these equations for \(k=1,\dots,d-1\) gives the block-tridiagonal system (\ref{eq:hmetrix}).
\end{proof}

Observe from Theorem~\ref{hproj} that, although both our work and \cite{cai2022tensor} require solving a linear system for the horizontal projection, the dimension of our system is smaller because we solve for the \(\frac{r_k(r_k-1)}{2}\) independent entries of each skew-symmetric matrix \(\Omega_k\), whereas in \cite{cai2022tensor} the analogous matrices are full square matrices of size \(r_k\). Moreover, the cost of forming the coefficient matrices is \(\mathcal{O}(d n r^5)\) in our setting, compared to \(\mathcal{O}(d n r^6)\) in \cite{cai2022tensor}.

\subsection{Riemannian Metric on the Quotient Manifold}\label{rieme}
Having introduced the vertical and horizontal spaces, we now verify that the family of metrics \(\overline{g}^{\va}\) defined in \eqref{eq:transmetric} descends to a well-defined Riemannian metric on the quotient manifold \(\mathcal{M}_{\rr}\). As established in the introduction, the choice of the orthogonal group as the quotient group---combined with the left-orthogonal structure---enables the construction of this family. The key step is to confirm that the action of the orthogonal group defining the equivalence classes is isometric, i.e., it preserves each metric \(\overline{g}^{\va}\). This is precisely the condition that allows the metrics to induce a Riemannian metric on \(\mathcal{M}_{\rr}\).

\begin{lemma}\label{hlift}
Let \(\xi_{[\bar{\mathcal{X}}]}\in T_{[\bar{\mathcal{X}}]}\mathcal{M}_{\rr}\) and let \(\bar{\mathcal{X}},\bar{\mathcal{Y}}\in[\bar{\mathcal{X}}]\) be two representatives of the same equivalence class, with \(\bar{\mathcal{Y}}=\varphi_Q(\bar{\mathcal{X}})\) as in \eqref{orth-group}. Let \(\overline{\xi}_{\bar{\mathcal{X}}}\) and \(\overline{\xi}_{\bar{\mathcal{Y}}}\) be the horizontal lifts of \(\xi_{[\bar{\mathcal{X}}]}\) at \(\bar{\mathcal{X}}\) and \(\bar{\mathcal{Y}}\), respectively. Then
\[
\overline{\xi}_{\bar{\mathcal{Y}}} = \varphi_Q(\overline{\xi}_{\bar{\mathcal{X}}}).
\]
\end{lemma}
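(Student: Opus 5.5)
The plan is to show that $\varphi_Q(\overline{\xi}_{\bar{\mathcal{X}}})$ has the three properties that characterize $\overline{\xi}_{\bar{\mathcal{Y}}}$: it is tangent at $\bar{\mathcal{Y}}$, it projects to $\xi_{[\bar{\mathcal{X}}]}$, and it is horizontal. Uniqueness of the horizontal lift then gives the identity. The action $\varphi_Q$ extends to the ambient product space as the same formula acting linearly on core tuples, and I will use that linear map throughout.

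\emph{Tangency.} Let $\overline{\xi}_{\bar{\mathcal{X}}}=(\xi_1,\dots,\xi_d)$ and set $\zeta_k=\xi_k\times_1 Q_{k-1}\times_3 Q_k$. By \eqref{eq:Ltrans}, $L(\mathcal{Y}_k)=(I_{n_k}\otimes Q_{k-1})L(\mathcal{X}_k)Q_k^{\!\top}$ and $L(\zeta_k)=(I_{n_k}\otimes Q_{k-1})L(\xi_k)Q_k^{\!\top}$. Hence
\[
L(\mathcal{Y}_k)^{\!\top}L(\zeta_k)=Q_k L(\mathcal{X}_k)^{\!\top}L(\xi_k)Q_k^{\!\top}.
\]
The tangency condition of Proposition~\ref{pro:tangent} is therefore preserved.

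\emph{Same quotient tangent vector.} Since $\pi\circ\varphi_Q=\pi$ and $\varphi_Q$ is linear on the ambient space, differentiating gives $D\pi(\bar{\mathcal{Y}})[D\varphi_Q(\bar{\mathcal{X}})[\overline{\xi}]]=D\pi(\bar{\mathcal{X}})[\overline{\xi}]$, with $D\varphi_Q(\bar{\mathcal{X}})[\overline{\xi}]=\varphi_Q(\overline{\xi})$. Thus $\varphi_Q(\overline{\xi}_{\bar{\mathcal{X}}})$ is a lift of $\xi_{[\bar{\mathcal{X}}]}$ at $\bar{\mathcal{Y}}$.

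\emph{Horizontality.} I will verify the condition of Proposition~\ref{hcharacter} at $\bar{\mathcal{Y}}$. This requires knowing how $R^{k+1}$ transforms. The contracted product $\Phi(\mathcal{Y}_{k+1},\dots,\mathcal{Y}_{k+a_{k+1}})$ equals $\Phi(\mathcal{X}_{k+1},\dots,\mathcal{X}_{k+a_{k+1}})$ acted on by $Q_k$ in its first mode and by $Q_{k+a_{k+1}}$ in its last mode, because the interior $Q$'s cancel. Therefore
\[
\mathcal{Y}_{\ge k+1,a_{k+1}}=(Q_{k+a_{k+1}}\otimes I)\,\mathcal{X}_{\ge k+1,a_{k+1}}\,Q_k^{\!\top},
\]
and since the left factor is orthogonal, $R^{k+1}_{\mathcal{Y}}=Q_kR^{k+1}_{\mathcal{X}}Q_k^{\!\top}$. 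The boundary case $a_{k+1}=0$ is trivial.

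Combining this with the unfolding formulas:
\begin{itemize}
\item the first term becomes $L(\mathcal{Y}_k)^{\!\top}L(\zeta_k)R^{k+1}_{\mathcal{Y}}=Q_k\bigl(L(\mathcal{X}_k)^{\!\top}L(\xi_k)R^{k+1}_{\mathcal{X}}\bigr)Q_k^{\!\top}$;
\item by \eqref{eq:Rtrans} and $(Q_{k+1}\otimes I)^{\!\top}(Q_{k+1}R^{k+2}_{\mathcal{X}}Q_{k+1}^{\!\top}\otimes I)(Q_{k+1}\otimes I)=R^{k+2}_{\mathcal{X}}\otimes I$, the second term becomes $Q_k\bigl(R(\xi_{k+1})(R^{k+2}_{\mathcal{X}}\otimes I_{n_{k+1}})R(\mathcal{X}_{k+1})^{\!\top}\bigr)Q_k^{\!\top}$.
\end{itemize}
So the matrix in Proposition~\ref{hcharacter} is mapped by orthogonal congruence with $Q_k$, and its symmetry is preserved. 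This makes $\varphi_Q(\overline{\xi}_{\bar{\mathcal{X}}})$ horizontal at $\bar{\mathcal{Y}}$.

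\emph{Conclusion and main obstacle.} Uniqueness of the horizontal lift yields $\overline{\xi}_{\bar{\mathcal{Y}}}=\varphi_Q(\overline{\xi}_{\bar{\mathcal{X}}})$. I expect the main care to be needed in the transformation law for $R^{k+1}$ under $\varphi_Q$, specifically in getting the Kronecker ordering of the outer factor right. The point to confirm is that the trailing $Q_{k+a_{k+1}}$ cancels in the Gram matrix because it is orthogonal, leaving only conjugation by $Q_k$. This is exactly where orthogonality of the group, rather than $\mathrm{GL}$, is essential.
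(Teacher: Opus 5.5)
Your proposal is correct and follows essentially the same route as the paper. You show that $\varphi_Q(\overline{\xi}_{\bar{\mathcal{X}}})$ is a lift via $\pi\circ\varphi_Q=\pi$ and linearity, verify the condition of Proposition~\ref{hcharacter} using $R^{k+1}_{\mathcal{Y}}=Q_kR^{k+1}_{\mathcal{X}}Q_k^{\!\top}$ and orthogonal congruence, and conclude by uniqueness of the horizontal lift; your explicit tangency check is only a small addition that the paper leaves implicit, since it follows from $\varphi_Q$ being a linear diffeomorphism of $\overline{\mathcal{M}}_{\rr}$.
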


\begin{proof}
Let \(\xi_{[\bar{\mathcal{X}}]}\in T_{[\bar{\mathcal{X}}]}\mathcal{M}_{\rr}\) and let \(\bar{\mathcal{X}},\bar{\mathcal{Y}}\in[\bar{\mathcal{X}}]\) with \(\bar{\mathcal{Y}}=\varphi_Q(\bar{\mathcal{X}})\). By the definition of the horizontal lift, \(\overline{\xi}_{\bar{\mathcal{X}}}\) is the unique horizontal vector satisfying \(D\pi(\bar{\mathcal{X}})[\overline{\xi}_{\bar{\mathcal{X}}}] = \xi_{[\bar{\mathcal{X}}]}\). Since \(\pi\circ\varphi_Q = \pi\) and \(\varphi_Q\) is linear,
\[
D\pi(\bar{\mathcal{Y}})[\varphi_Q(\overline{\xi}_{\bar{\mathcal{X}}})] = D\pi(\bar{\mathcal{Y}})\circ D\varphi_Q(\bar{\mathcal{X}})[\overline{\xi}_{\bar{\mathcal{X}}}] = D(\pi\circ\varphi_Q)(\bar{\mathcal{X}})[\overline{\xi}_{\bar{\mathcal{X}}}] = D\pi(\bar{\mathcal{X}})[\overline{\xi}_{\bar{\mathcal{X}}}] = \xi_{[\bar{\mathcal{X}}]}.
\]
Thus \(\varphi_Q(\overline{\xi}_{\bar{\mathcal{X}}})\) is a lift of \(\xi_{[\bar{\mathcal{X}}]}\) at \(\bar{\mathcal{Y}}\). It remains to verify that this lift is horizontal, i.e., \(\varphi_Q(\overline{\xi}_{\bar{\mathcal{X}}})\in\mathcal{H}_{\bar{\mathcal{Y}}}\).

Write \(\overline{\xi}_{\bar{\mathcal{X}}}=(\xi_1^x,\dots,\xi_d^x)\) and set \(\varphi_Q(\overline{\xi}_{\bar{\mathcal{X}}})=(\xi_1^y,\dots,\xi_d^y)\). We need to verify that \(\varphi_Q(\overline{\xi}_{\bar{\mathcal{X}}})\) satisfies the horizontal condition in Proposition~\ref{hcharacter}. Recall that for a representative \(\bar{\mathcal{X}}\), the matrix \(R^{k+1,x}\) is defined as \(R^{k+1,x} = \mathcal{X}_{\ge k+1,a_{k+1}}^{\!\top}\mathcal{X}_{\ge k+1,a_{k+1}}\). Under the transformation \(\bar{\mathcal{Y}}=\varphi_Q(\bar{\mathcal{X}})\), the corresponding matrix transforms as
\begin{equation}\label{eq:R-transform}
R^{k+1,y} = \mathcal{Y}_{\ge k+1,a_{k+1}}^{\!\top}\mathcal{Y}_{\ge k+1,a_{k+1}} = Q_k R^{k+1,x} Q_k^{\!\top},
\end{equation}
because \(\mathcal{Y}_{\ge k+1,a_{k+1}} = (Q_{k+a_{k+1}}\otimes I_{\prod_{i=k+1}^{k+a_{k+1}} n_i})\mathcal{X}_{\ge k+1,a_{k+1}} Q_k^{\!\top}\).

 Now, for each \(k\in[d-1]\), using the transformation rules (\ref{eq:Ltrans})--(\ref{eq:Rtrans}) for the unfoldings and the relation \eqref{eq:R-transform}, we compute:
\begin{align*}
&L(\mathcal{Y}_k)^{\!\top}L(\xi_k^y)R^{k+1,y} - R(\xi_{k+1}^y)(R^{k+2,y}\otimes I_{n_{k+1}})R(\mathcal{Y}_{k+1})^{\!\top} \\
&= Q_k L(\mathcal{X}_k)^{\!\top}(I_{n_k}\otimes Q_{k-1})^{\!\top}(I_{n_k}\otimes Q_{k-1}) L(\xi_k^x) Q_k^{\!\top} \bigl(Q_k R^{k+1,x} Q_k^{\!\top}\bigr)- Q_k R(\xi_{k+1}^x) \\
&\quad (Q_{k+1}\otimes I_{n_{k+1}})^{\!\top}\bigl((Q_{k+1}R^{k+2,x}Q_{k+1}^{\!\top}) \otimes I_{n_{k+1}}\bigr) (Q_{k+1}\otimes I_{n_{k+1}})R(\mathcal{X}_{k+1})^{\!\top} Q_k^{\!\top} \\
&= Q_k \bigl(L(\mathcal{X}_k)^{\!\top}L(\xi_k^x)R^{k+1,x} - R(\xi_{k+1}^x)(R^{k+2,x}\otimes I_{n_{k+1}})R(\mathcal{X}_{k+1})^{\!\top}\bigr) Q_k^{\!\top}.
\end{align*}

Because \(\overline{\xi}_{\bar{\mathcal{X}}}\) is horizontal, the matrix inside the parentheses is symmetric. Conjugating a symmetric matrix by an orthogonal matrix \(Q_k\) preserves symmetry. Therefore \(\varphi_Q(\overline{\xi}_{\bar{\mathcal{X}}})\) satisfies the horizontal condition of Proposition~\ref{hcharacter}, i.e., \(\varphi_Q(\overline{\xi}_{\bar{\mathcal{X}}})\in\mathcal{H}_{\bar{\mathcal{Y}}}\). Consequently, \[
\overline{\xi}_{\bar{\mathcal{Y}}} = \varphi_Q(\overline{\xi}_{\bar{\mathcal{X}}}),
\]which completes the proof.
\end{proof}

\begin{theorem}[Invariance of the metric]\label{the:metric}
Let \(\bar{\mathcal{X}},\bar{\mathcal{Y}}\in[\bar{\mathcal{X}}]\) and let \(\overline{\xi}_{\bar{\mathcal{X}}},\overline{\xi}_{\bar{\mathcal{Y}}}\) (respectively \(\overline{\eta}_{\bar{\mathcal{X}}},\overline{\eta}_{\bar{\mathcal{Y}}}\)) be the horizontal lifts of the same tangent vector \(\xi_{[\bar{\mathcal{X}}]}\) (respectively \(\eta_{[\bar{\mathcal{X}}]}\)) at the two points. Then
\[
\overline{g}^{\va}_{\bar{\mathcal{Y}}}(\overline{\xi}_{\bar{\mathcal{Y}}},\overline{\eta}_{\bar{\mathcal{Y}}}) = \overline{g}^{\va}_{\bar{\mathcal{X}}}(\overline{\xi}_{\bar{\mathcal{X}}},\overline{\eta}_{\bar{\mathcal{X}}}).
\]
\end{theorem}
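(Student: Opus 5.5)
The plan is to reduce the statement to Lemma~\ref{hlift} together with a direct invariance computation for the expression \eqref{eq:transmetric}. Since $\bar{\mathcal{Y}}\in[\bar{\mathcal{X}}]$, Proposition~\ref{tt-uniq} provides orthogonal $Q_1,\dots,Q_{d-1}$ with $\bar{\mathcal{Y}}=\varphi_Q(\bar{\mathcal{X}})$. By Lemma~\ref{hlift}, $\overline{\xi}_{\bar{\mathcal{Y}}}=\varphi_Q(\overline{\xi}_{\bar{\mathcal{X}}})$ and $\overline{\eta}_{\bar{\mathcal{Y}}}=\varphi_Q(\overline{\eta}_{\bar{\mathcal{X}}})$. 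It therefore suffices to prove the stronger statement that $\varphi_Q$ is an isometry of $\overline{g}^{\va}$ for arbitrary (not necessarily horizontal) tangent vectors:
\[
\overline{g}^{\va}_{\varphi_Q(\bar{\mathcal{X}})}(\varphi_Q(\overline{\xi}),\varphi_Q(\overline{\eta}))=\overline{g}^{\va}_{\bar{\mathcal{X}}}(\overline{\xi},\overline{\eta}).
\]

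I will verify this term by term in \eqref{eq:transmetric}. For the $k$th core, write $\xi_k^y=\xi_k^x\times_1 Q_{k-1}\times_3 Q_k$, and similarly for $\eta_k^y$. By \eqref{eq:Ltrans},
\[
L(\xi_k^y)=(I_{n_k}\otimes Q_{k-1})L(\xi_k^x)Q_k^{\!\top}.
\]
By \eqref{eq:R-transform} (established in the proof of Lemma~\ref{hlift}), $R^{k+1,y}=Q_kR^{k+1,x}Q_k^{\!\top}$. Hence
\[
\bigl\langle L(\xi_k^y)R^{k+1,y},L(\eta_k^y)\bigr\rangle
=\bigl\langle (I\otimes Q_{k-1})L(\xi_k^x)R^{k+1,x}Q_k^{\!\top},\,(I\otimes Q_{k-1})L(\eta_k^x)Q_k^{\!\top}\bigr\rangle .
\]
Since $I_{n_k}\otimes Q_{k-1}$ and $Q_k$ are orthogonal, the Frobenius inner product is invariant under left multiplication by $I_{n_k}\otimes Q_{k-1}$ and right multiplication by $Q_k^{\!\top}$. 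The right-hand side therefore equals $\langle L(\xi_k^x)R^{k+1,x},L(\eta_k^x)\rangle$. The boundary cases need a brief check: for $k=d$ we have $Q_d=1$ and $R^{d+1}=1$, and for $k=1$ we have $Q_0=1$. Summing over $k$ then gives the claim.

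The main point to secure is the transformation rule \eqref{eq:R-transform} for the extended interface matrices $\mathcal{X}_{\ge k+1,a_{k+1}}$ with general $a_{k+1}$. Its proof uses the telescoping of the gauge: the internal $Q_j$ and $Q_j^{\!\top}$ factors between consecutive cores cancel in $\Phi(\mathcal{Y}_{k+1},\dots,\mathcal{Y}_{k+a_{k+1}})$. What remains is $Q_k$ on the left boundary and $Q_{k+a_{k+1}}$ on the right boundary, and $Q_{k+a_{k+1}}$ cancels in the Gram product because it is orthogonal. This is exactly where orthogonality, rather than a general $\mathrm{GL}$ action, is essential. An alternative check bypasses \eqref{eq:transmetric} and works from the original form \eqref{eq:metric}. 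There the partial TT tensor $\Phi(\mathcal{Y}_1,\dots,\xi_k^y,\dots,\mathcal{Y}_{k+a_{k+1}})$ equals the $\bar{\mathcal{X}}$ version with only a trailing mode multiplied by $Q_{k+a_{k+1}}$, and its norm is therefore unchanged.
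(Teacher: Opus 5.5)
Your proposal is correct and follows essentially the same route as the paper. You use Lemma~\ref{hlift} to write the lifts at \(\bar{\mathcal{Y}}\) as \(\varphi_Q\) of the lifts at \(\bar{\mathcal{X}}\), then check \eqref{eq:transmetric} term by term using \eqref{eq:Ltrans}, the rule \(R^{k+1,y}=Q_kR^{k+1,x}Q_k^{\!\top}\) from \eqref{eq:R-transform}, and orthogonal invariance of the Frobenius inner product; your extra observation that horizontality is never used, so \(\varphi_Q\) is an isometry on all of \(T_{\bar{\mathcal{X}}}\overline{\mathcal{M}}_{\rr}\), is implicit in the paper's computation as well.
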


\begin{proof}
Let \(\bar{\mathcal{Y}}=\varphi_Q(\bar{\mathcal{X}})\). By Lemma~\ref{hlift}, \(\overline{\xi}_{\bar{\mathcal{Y}}}=\varphi_Q(\overline{\xi}_{\bar{\mathcal{X}}})\) and \(\overline{\eta}_{\bar{\mathcal{Y}}}=\varphi_Q(\overline{\eta}_{\bar{\mathcal{X}}})\). From \eqref{eq:transmetric},
\[
\overline{g}^{\va}_{\bar{\mathcal{Y}}}(\overline{\xi}_{\bar{\mathcal{Y}}},\overline{\eta}_{\bar{\mathcal{Y}}})
= \sum_{k=1}^{d} \bigl\langle L(\xi_k^y) R^{k+1,y},\; L(\eta_k^y) \bigr\rangle.
\]
Using the unfolding rules \eqref{eq:Ltrans} and the definition of \(\varphi_Q\),
\(L(\xi_k^y) = (I_{n_k}\otimes Q_{k-1})L(\xi_k^x)Q_k^{\!\top}\), and similarly for \(L(\eta_k^y)\).

Inserting these together with \(R^{k+1,y} = Q_k R^{k+1,x} Q_k^{\!\top}\) from \eqref{eq:R-transform} gives
\begin{align*}
\overline{g}^{\va}_{\bar{\mathcal{Y}}}(\overline{\xi}_{\bar{\mathcal{Y}}},\overline{\eta}_{\bar{\mathcal{Y}}})
&= \sum_{k=1}^{d} \bigl\langle (I_{n_k}\otimes Q_{k-1})L(\xi_k^x)Q_k^{\!\top} Q_k R^{k+1,x} Q_k^{\!\top},\; (I_{n_k}\otimes Q_{k-1})L(\eta_k^x)Q_k^{\!\top} \bigr\rangle \\
&= \sum_{k=1}^{d} \bigl\langle L(\xi_k^x) R^{k+1,x},\; L(\eta_k^x) \bigr\rangle
= \overline{g}^{\va}_{\bar{\mathcal{X}}}(\overline{\xi}_{\bar{\mathcal{X}}},\overline{\eta}_{\bar{\mathcal{X}}}),
\end{align*}
which completes the proof.
\end{proof}

Because the metric $\overline{g}^{\va}$ is invariant under the action of the orthogonal group, it induces a unique Riemannian metric $g^{\va}$ on the quotient manifold $\mathcal{M}_{\rr}$ via the standard quotient construction \cite[Section 3.6.2]{absil2008optimization}, \cite[Section 9.7]{boumal2023introduction}:
\[
g^{\va}_{[\bar{\mathcal{X}}]}(\xi_{[\bar{\mathcal{X}}]},\eta_{[\bar{\mathcal{X}}]})
:= \overline{g}^{\va}_{\bar{\mathcal{X}}}(\overline{\xi}_{\bar{\mathcal{X}}},\overline{\eta}_{\bar{\mathcal{X}}}),
\]
where $\overline{\xi}_{\bar{\mathcal{X}}},\overline{\eta}_{\bar{\mathcal{X}}}$ are the horizontal lifts of $\xi_{[\bar{\mathcal{X}}]},\eta_{[\bar{\mathcal{X}}]}$ at an arbitrary representative $\bar{\mathcal{X}}\in[\bar{\mathcal{X}}]$.

The orthogonality of the matrices $Q_k$ is essential for the above invariance. If, as in \cite{cai2022tensor}, the transformations were allowed to be arbitrary invertible matrices ($Q_k\in\mathrm{GL}(r_k)$), the inner product would not be preserved unless the parameter tuple $\va$ is restricted to the special case $\va=(d-1,d-2,\dots,0)$ (the full metric). The left-orthogonal structure of $\overline{\mathcal{M}}_{\rr}$ therefore provides a larger family of admissible metrics on the quotient, which highlights an advantage of our geometric setting.

\subsection{Riemannian Gradient}

The Riemannian gradient of the function \(\overline{h}\) at a point \(\bar{\mathcal{X}}\in\overline{\mathcal{M}}_{\rr}\), denoted by \(\operatorname{grad} \overline{h}(\bar{\mathcal{X}})\), is the unique tangent vector in \(T_{\bar{\mathcal{X}}}\overline{\mathcal{M}}_{\rr}\) satisfying
\[
\overline{g}^{\va}_{\bar{\mathcal{X}}}\!\bigl(\operatorname{grad} \overline{h}(\bar{\mathcal{X}}),\ \overline{\xi}\bigr)=D\overline{h}(\bar{\mathcal{X}})[\overline{\xi}], \qquad \forall\ \overline{\xi}\in T_{\bar{\mathcal{X}}}\overline{\mathcal{M}}_{\rr}.
\]

\begin{proposition}\label{pro:rgrad}
The Riemannian gradient of \(\overline{h}\) at \(\bar{\mathcal{X}}\) is given by
\[
\operatorname{grad} \overline{h}(\bar{\mathcal{X}}) = \psi_{\bar{\mathcal{X}}}\!\bigl(\operatorname{egrad}\overline{h}(\bar{\mathcal{X}})\bigr),
\]
where \(\psi_{\bar{\mathcal{X}}}\) is the orthogonal projection defined by \eqref{op1} and the Euclidean gradient \(\operatorname{egrad}\overline{h}(\bar{\mathcal{X}})\) has components
\[
\operatorname{egrad}\overline{h}(\bar{\mathcal{X}}) = \Bigl\{L^{-1}\!\Bigl((I_{n_k}\otimes \mathcal{X}_{\leq k-1}^{\!\top})\,
\bigl[\mathcal{P}_{\Omega}(\Phi(\bar{\mathcal{X}}))-\mathcal{P}_{\Omega}(\Gamma)\bigr]_{\langle k \rangle}\,
\mathcal{X}_{\geq k+1}\, (R^{k+1})^{-1}\Bigr)\Bigr\}_{k=1}^{d}.
\]
\end{proposition}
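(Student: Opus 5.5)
The plan is to follow the standard recipe for a Riemannian submanifold with a non-Euclidean ambient metric. Since $\overline{g}^{\va}$ in \eqref{eq:transmetric} extends to the whole product space $\prod_{k}\mathbb{R}^{r_{k-1}\times n_k\times r_k}$, I will first compute the gradient of $\overline{h}$ on the ambient space with respect to $\overline{g}^{\va}$, denoted $\operatorname{egrad}\overline{h}$ below. I will then argue that the Riemannian gradient on $\overline{\mathcal{M}}_{\rr}$ is its $\overline{g}^{\va}$-orthogonal projection onto the tangent space, which Theorem~\ref{tproj} gives as $\psi_{\bar{\mathcal{X}}}$.

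For the first step, I compute the directional derivative of $\overline{h}$ along an arbitrary $\bar{\zeta}=(\zeta_1,\dots,\zeta_d)$. Since $\Phi$ is multilinear,
\[
D\overline{h}(\bar{\mathcal{X}})[\bar{\zeta}]=\sum_{k=1}^d\bigl\langle \mathcal{E},\Phi(\mathcal{X}_1,\dots,\zeta_k,\dots,\mathcal{X}_d)\bigr\rangle, \qquad \mathcal{E}=\mathcal{P}_\Omega(\Phi(\bar{\mathcal{X}}))-\mathcal{P}_\Omega(\Gamma).
\]
Using \eqref{eq_mk} together with the recursion \eqref{eq_inte} for $\mathcal{X}_{\le k}$, the $k$th unfolding of the $k$th summand is $(I_{n_k}\otimes\mathcal{X}_{\le k-1})L(\zeta_k)\mathcal{X}_{\ge k+1}^{\!\top}$. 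The $k$th term therefore equals $\langle (I_{n_k}\otimes\mathcal{X}_{\le k-1}^{\!\top})\mathcal{E}_{\langle k\rangle}\mathcal{X}_{\ge k+1},\,L(\zeta_k)\rangle$.

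Next I match this with $\overline{g}^{\va}(\bar{G},\bar{\zeta})=\sum_k\langle L(G_k)R^{k+1},L(\zeta_k)\rangle$ for all $\bar{\zeta}$. This forces $L(G_k)=(I_{n_k}\otimes\mathcal{X}_{\le k-1}^{\!\top})\mathcal{E}_{\langle k\rangle}\mathcal{X}_{\ge k+1}(R^{k+1})^{-1}$, which is exactly the stated formula. This requires $R^{k+1}$ to be invertible. That holds because $\mathcal{X}_{\ge k+1,a_{k+1}}$ has full column rank $r_k$: by Proposition~\ref{tt-rank} the right unfoldings have full row rank, and the Kronecker recursion \eqref{eq_inte} applied over $a_{k+1}$ cores preserves full rank. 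The same fact makes $\overline{g}^{\va}$ positive definite on the ambient space, since $R^{k+1}\succ 0$.

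Finally, for any tangent $\bar{\xi}$, I split $\operatorname{egrad}\overline{h}=\psi_{\bar{\mathcal{X}}}(\operatorname{egrad}\overline{h})+\bar{\nu}$ with $\bar{\nu}$ normal. Proposition~\ref{pro:normspace} and Theorem~\ref{tproj} give this decomposition with respect to the same metric. Then $D\overline{h}(\bar{\mathcal{X}})[\bar{\xi}]=\overline{g}^{\va}(\operatorname{egrad}\overline{h},\bar{\xi})=\overline{g}^{\va}(\psi_{\bar{\mathcal{X}}}(\operatorname{egrad}\overline{h}),\bar{\xi})$. Uniqueness of the Riemannian gradient then yields the claim.

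The main obstacle is the bookkeeping in the unfolding identity for the $k$th term, namely checking the Kronecker ordering $I_{n_k}\otimes\mathcal{X}_{\le k-1}$ against the multi-index convention. The projection step itself is a routine consequence of Theorem~\ref{tproj}.
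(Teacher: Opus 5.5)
Your proposal is correct and follows essentially the same route as the paper. Both compute $D\overline{h}$ termwise via the unfolding $(I_{n_k}\otimes\mathcal{X}_{\le k-1})L(\zeta_k)\mathcal{X}_{\ge k+1}^{\top}$, identify the ambient $\overline{g}^{\va}$-gradient by matching against $\sum_k\langle L(G_k)R^{k+1},L(\zeta_k)\rangle$, and then project with $\psi_{\bar{\mathcal{X}}}$; your extra checks (invertibility of $R^{k+1}$ from the full-rank right unfoldings, and the tangent/normal splitting behind the projection step) just fill in details the paper leaves implicit.
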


\begin{proof}
For a tangent vector \(\overline{\xi}=(\xi_1,\dots,\xi_d)\in T_{\bar{\mathcal{X}}}\overline{\mathcal{M}}_{\rr}\),
\begin{align*}
D\overline{h}(\bar{\mathcal{X}})[\overline{\xi}]
&= \lim_{t\to 0}\frac{\overline{h}(\bar{\mathcal{X}}+t\overline{\xi})-\overline{h}(\bar{\mathcal{X}})}{t} \\
&= \sum_{k=1}^{d} \bigl\langle \bigl[\mathcal{P}_{\Omega}(\Phi(\bar{\mathcal{X}}))-\mathcal{P}_{\Omega}(\Gamma)\bigr]_{\langle k \rangle},\;
(I_{n_k}\otimes \mathcal{X}_{\leq k-1}) L(\xi_k) \mathcal{X}_{\geq k+1}^{\!\top} \bigr\rangle.
\end{align*}
Using the recursive relation \(\mathcal{X}_{\geq k+1} = (\mathcal{X}_{\geq k+1+a_{k+1}}\otimes I_{n_{k+1}\times\cdots\times n_{k+a_{k+1}}})\,\mathcal{X}_{\geq k+1,a_{k+1}}\) and the definition \(R^{k+1}=\mathcal{X}_{\geq k+1,a_{k+1}}^{\!\top}\mathcal{X}_{\geq k+1,a_{k+1}}\), we rewrite the inner product as

\begin{multline*}
\bigl\langle (I_{n_k}\otimes \mathcal{X}_{\leq k-1}^{\!\top})\,
\bigl[\mathcal{P}_{\Omega}(\Phi(\bar{\mathcal{X}}))-\mathcal{P}_{\Omega}(\Gamma)\bigr]_{\langle k \rangle}\,
\mathcal{X}_{\geq k+1}\,(R^{k+1})^{-1}\,\mathcal{X}_{\geq k+1,a_{k+1}}^{\!\top},\\
L(\xi_k)\,\mathcal{X}_{\geq k+1,a_{k+1}}^{\!\top} \bigr\rangle.
\end{multline*}

Summing over \(k\) and comparing with the metric expression (\ref{eq:transmetric}) yields
\[
D\overline{h}(\bar{\mathcal{X}})[\overline{\xi}] = \overline{g}^{\va}_{\bar{\mathcal{X}}}\!\bigl(\operatorname{egrad}\overline{h}(\bar{\mathcal{X}}),\ \overline{\xi}\bigr).
\]
Thus \(\operatorname{egrad}\overline{h}(\bar{\mathcal{X}})\) satisfies the defining relation of the Riemannian gradient in the ambient Euclidean space. Projecting it onto the tangent space via \(\psi_{\bar{\mathcal{X}}}\) gives the Riemannian gradient on \(\overline{\mathcal{M}}_{\rr}\).
\end{proof}

\subsection{Retraction}\label{sec:retraction}

A retraction on a manifold $\overline{\mathcal{M}}$ is a smooth mapping $\overline{R}:T\overline{\mathcal{M}}\to\overline{\mathcal{M}}$ such that
\begin{itemize}
\item $\overline{R}_{\bar{\mathcal{X}}}(0_{\bar{\mathcal{X}}}) = \bar{\mathcal{X}}$, where $0_{\bar{\mathcal{X}}}$ is the zero vector in $T_{\bar{\mathcal{X}}}\overline{\mathcal{M}}$;
\item $D\overline{R}_{\bar{\mathcal{X}}}(0_{\bar{\mathcal{X}}}) = \operatorname{id}_{T_{\bar{\mathcal{X}}}\overline{\mathcal{M}}}$, the identity map on the tangent space.
\end{itemize}

If a retraction $\overline{R}$ on the total space additionally satisfies
\begin{equation}\label{eq:ret-compat}
\pi\bigl(\overline{R}_{\bar{\mathcal{X}}_a}(\overline{\xi}_{\bar{\mathcal{X}}_a})\bigr) = \pi\bigl(\overline{R}_{\bar{\mathcal{X}}_b}(\overline{\xi}_{\bar{\mathcal{X}}_b})\bigr)
\end{equation}
for any $\bar{\mathcal{X}}_a,\bar{\mathcal{X}}_b$ in the same fibre $\pi^{-1}([\bar{\mathcal{X}}])$ and any horizontal lift $\overline{\xi}$, then
$R_{[\bar{\mathcal{X}}]}(\xi) := \pi\bigl(\overline{R}_{\bar{\mathcal{X}}}(\overline{\xi}_{\bar{\mathcal{X}}})\bigr)$
defines a retraction on the quotient manifold \cite[Proposition 4.1.3]{absil2008optimization}.
For our specific total space $\overline{\mathcal{M}}_{\rr}$, this compatibility condition is indispensable to ensure that the retraction descends to the quotient manifold $\mathcal{M}_{\rr}$.

A natural attempt is to treat $\overline{\mathcal{M}}_{\rr}$ as a product of Stiefel manifolds and apply a standard retraction (e.g., polar or QR decomposition) to each core independently. While this yields a valid retraction on the total space, it is not guaranteed to satisfy the quotient compatibility \eqref{eq:ret-compat}. For instance, the QR-based retraction on the Stiefel manifold is not guaranteed to satisfy the compatibility condition. The polar-based variant, given explicitly by
\[
\overline{R}^1_{\bar{\mathcal{X}}}(\overline{\xi}) = \Bigl(L^{-1}\!\bigl(\operatorname{uf}(L(\mathcal{X}_1+\xi_1))\bigr),\dots,
L^{-1}\!\bigl(\operatorname{uf}(L(\mathcal{X}_{d-1}+\xi_{d-1}))\bigr),\,
\mathcal{X}_d+\xi_d\Bigr),
\]
where $\operatorname{uf}(A)$ extracts the orthogonal factor of the polar decomposition, does satisfy the compatibility condition~\eqref{eq:ret-compat}; a verification of this fact is implicit in the quotient-geometric frameworks for Tucker tensors that employ an analogous retraction~\cite{kasai2016low}. However, this compatibility follows from the particular algebraic properties of the polar decomposition rather than from a general construction principle. Consequently, this product-wise approach does not offer a systematic way to design retractions that are compatible with the quotient structure for TT tensors.

We propose a general strategy to overcome this difficulty. The key observation is that the compatibility condition~\eqref{eq:ret-compat} is automatically fulfilled if the retraction preserves the full tensor, i.e.,
\begin{equation}\label{eq:tensor-preserving}
\Phi\bigl(\overline{R}_{\bar{\mathcal{X}}}(\overline{\xi}_{\bar{\mathcal{X}}})\bigr) = \Phi(\bar{\mathcal{X}} + \overline{\xi}_{\bar{\mathcal{X}}}),
\end{equation}
where $\overline{\xi}_{\bar{\mathcal{X}}}$ is the horizontal lift at $\bar{\mathcal{X}}$. Indeed, by Lemma~\ref{hlift}, for any $\bar{\mathcal{Y}} = \varphi_Q(\bar{\mathcal{X}})$ in the same fibre we have $\overline{\xi}_{\bar{\mathcal{Y}}} = \varphi_Q(\overline{\xi}_{\bar{\mathcal{X}}})$, and the invariance of $\Phi$ under the group action gives $\Phi(\bar{\mathcal{Y}} + \overline{\xi}_{\bar{\mathcal{Y}}}) = \Phi(\bar{\mathcal{X}} + \overline{\xi}_{\bar{\mathcal{X}}})$. Consequently, if $\overline{R}$ satisfies~\eqref{eq:tensor-preserving}, then $\Phi(\overline{R}_{\bar{\mathcal{Y}}}(\overline{\xi}_{\bar{\mathcal{Y}}})) = \Phi(\overline{R}_{\bar{\mathcal{X}}}(\overline{\xi}_{\bar{\mathcal{X}}}))$ follows immediately, which is precisely the compatibility condition~\eqref{eq:ret-compat}.

This observation converts the problem from verifying the quotient compatibility into constructing a retraction that satisfies the formula~\eqref{eq:tensor-preserving}. The latter is a much more tractable task: verifying the original compatibility condition requires checking the equality of two retractions at arbitrarily chosen representatives, whereas constructing a mapping to satisfy~\eqref{eq:tensor-preserving} is a design problem with a clear target. Moreover, the TT format already provides such a procedure---the recursive left-orthogonalization process~\cite{oseledets2011tensor,steinlechner2016riemannian} was designed precisely to produce left-orthogonal cores without altering the full tensor, i.e., it satisfies~\eqref{eq:tensor-preserving} by construction. Adapting this procedure into retractions yields the following two mappings.

\textbf{Recursive polar retraction.} The first variant mimics left-orthogonalization via the polar decomposition:

\[
\overline{R}^2_{\bar{\mathcal{X}}}(\overline{\xi}) = \Bigl(L^{-1}\!\bigl(\operatorname{uf}(L(\mathcal{X}_1+\xi_1))\bigr),\,
L^{-1}\!\bigl(\operatorname{uf}(L((\mathcal{X}_2+\xi_2)\times_1 D_1))\bigr),\dots,\,
(\mathcal{X}_d+\xi_d)\times_1 D_{d-1}\Bigr),
\]
where $D_1 = \operatorname{up}(L(\mathcal{X}_1+\xi_1))$ and for $k=2,\dots,d-1$,
$D_k = \operatorname{up}\!\bigl(L((\mathcal{X}_k+\xi_k)\times_1 D_{k-1}^{\!\top})\bigr)$.
Here $\operatorname{up}(A) = (A^{\!\top}A)^{1/2}$ is the positive definite factor from the polar decomposition.

\textbf{Recursive QR retraction.} The second variant replaces the polar decomposition by the QR decomposition:

\[
\overline{R}^3_{\bar{\mathcal{X}}}(\overline{\xi}) = \Bigl(L^{-1}\!\bigl(\operatorname{qf}(L(\mathcal{X}_1+\xi_1))\bigr),\,
L^{-1}\!\bigl(\operatorname{qf}(L((\mathcal{X}_2+\xi_2)\times_1 F_1^{\!\top}))\bigr),\dots,\,
(\mathcal{X}_d+\xi_d)\times_1 F_{d-1}^{\!\top}\Bigr),
\]
where $F_1$ is the $R$-factor from the QR decomposition of $L(\mathcal{X}_1+\xi_1)$, and $F_k$ is the $R$-factor from the QR decomposition of $L((\mathcal{X}_k+\xi_k)\times_1 F_{k-1}^{\!\top})$ for $k=2,\dots,d-1$.
Here $\operatorname{qf}(A)$ denotes the $Q$-factor of the QR decomposition of $A$.

\begin{proposition}\label{3-overRe}
The mappings $\overline{R}^2$ and $\overline{R}^3$ defined above are retractions on $\overline{\mathcal{M}}_{\rr}$.
\end{proposition}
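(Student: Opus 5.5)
To prove that $\overline{R}^2$ and $\overline{R}^3$ are retractions, we must check three things. First, that they are well defined, smooth, and map into $\overline{\mathcal{M}}_{\rr}$, at least on a neighborhood of the zero section, which is all the standard definition requires. Second, that $\overline{R}_{\bar{\mathcal{X}}}(0_{\bar{\mathcal{X}}})=\bar{\mathcal{X}}$. Third, that $D\overline{R}_{\bar{\mathcal{X}}}(0_{\bar{\mathcal{X}}})=\operatorname{id}$. The natural tool is to view each map as a composition of the recursion $k\mapsto k+1$ and to use the known properties of the polar and QR factors. Specifically, $\operatorname{uf}$, $\operatorname{up}$, $\operatorname{qf}$ and the $R$-factor map are smooth on full-column-rank matrices (for $\operatorname{up}$ we use the smoothness of the square root on positive definite matrices; for QR we use uniqueness with positive diagonal). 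Their differentials at a matrix $X$ with $X^{\top}X=I$ are also known. For a tangent direction $V$ with $X^{\top}V$ skew, $D\operatorname{uf}(X)[V]=V$ and $D\operatorname{up}(X)[V]=\tfrac12(X^{\top}V+V^{\top}X)=0$. For QR, $D\operatorname{qf}(X)[V]=V$ on Stiefel tangent vectors, which is the standard Stiefel QR retraction fact, and $D F(X)[V]$ is the upper-triangular part $\operatorname{sym}$-type combination, which again vanishes when $X^{\top}V$ is skew.

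For well-definedness and the first property, we proceed by induction on $k$. At $\overline{\xi}=0$, $L(\mathcal{X}_1)$ has orthonormal columns, so $\operatorname{uf}(L(\mathcal{X}_1))=L(\mathcal{X}_1)$ and $D_1=I_{r_1}$, and likewise $F_1=I_{r_1}$. Inductively, $D_{k-1}=I$ gives $(\mathcal{X}_k+0)\times_1 D_{k-1}=\mathcal{X}_k$, which is again left-orthogonal, so $D_k=I$. Hence the output equals $\bar{\mathcal{X}}$. By continuity, for $\overline{\xi}$ small every matrix fed into the polar or QR factorization has full column rank. Each output core then has orthonormal left unfolding, and the rank conditions defining $\mathbb{R}_*^{r_{k-1}\times n_k\times r_k}$ are open and hold at $\bar{\mathcal{X}}$. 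This gives membership in $\overline{\mathcal{M}}_{\rr}$, and smoothness follows from the fact that the map is a composition of smooth maps. Although not needed for the proposition, we would also note the telescoping identity behind \eqref{eq:tensor-preserving}: since $L(\mathcal{A})=\operatorname{uf}(L(\mathcal{A}))\,\operatorname{up}(L(\mathcal{A}))$, each $D_k$ (resp.\ $F_k^{\top}$) is absorbed into the next core. (The exact placement of transposes should be checked against the definitions, with $\operatorname{up}$ symmetric.)

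The main step is the differential. We differentiate $t\mapsto\overline{R}_{\bar{\mathcal{X}}}(t\overline{\xi})$ at $t=0$ core by core. For the first core, the derivative is $D\operatorname{uf}(L(\mathcal{X}_1))[L(\xi_1)]=L(\xi_1)$, because $L(\mathcal{X}_1)^{\top}L(\xi_1)$ is skew by Proposition~\ref{pro:tangent}; moreover $\dot D_1=0$. Inductively, suppose $D_{k-1}(0)=I$ and $\dot D_{k-1}(0)=0$. Then the argument $A_k(t)=L((\mathcal{X}_k+t\xi_k)\times_1 D_{k-1}(t))$ satisfies $A_k(0)=L(\mathcal{X}_k)$ and $\dot A_k(0)=L(\xi_k)+L(\mathcal{X}_k\times_1\dot D_{k-1}(0))=L(\xi_k)$. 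Since $L(\mathcal{X}_k)^{\top}L(\xi_k)$ is skew, the derivative of the $k$th core is $L(\xi_k)$, and $\dot D_k(0)=\operatorname{sym}(L(\mathcal{X}_k)^{\top}L(\xi_k))=0$. The last core gives $\xi_d+\mathcal{X}_d\times_1\dot D_{d-1}(0)=\xi_d$. The QR case is identical once we show that $\dot F_k(0)=0$. Differentiating $F^{\top}F=A^{\top}A$ at $F=I$ gives $\dot F+\dot F^{\top}=A^{\top}\dot A+\dot A^{\top}A=0$. An upper-triangular matrix that is skew must be zero, so $\dot F_k(0)=0$, and then $\dot Q=\dot A-A\dot F=\dot A$. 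I expect this derivative bookkeeping to be the only real obstacle, namely verifying carefully that the correction factors $D_k$ and $F_k$ have vanishing first-order variation. Tangency, via the skew-symmetry of $L(\mathcal{X}_k)^{\top}L(\xi_k)$, is exactly what makes this work.
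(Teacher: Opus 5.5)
Your proof is correct and follows the same route as the paper. You check $\overline{R}_{\bar{\mathcal{X}}}(0)=\bar{\mathcal{X}}$ and then show recursively that the correction factors $D_k(t)$ and $F_k(t)$ equal $I+\mathcal{O}(t^2)$, so each core is $\mathcal{X}_k+t\xi_k+\mathcal{O}(t^2)$; you get the vanishing derivative from $\dot F+\dot F^{\top}=0$ with $\dot F$ upper triangular, where the paper uses the Stiefel retraction property of $\operatorname{qf}$ and the full column rank of $L(\mathcal{X}_k)$, and your treatment of well-definedness near the zero section and of membership in $\overline{\mathcal{M}}_{\rr}$ supplies details the paper leaves implicit.
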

\begin{proof}
We prove for $\overline{R}^3$; the case of $\overline{R}^2$ is analogous. By definition $\overline{R}^3_{\bar{\mathcal{X}}}(0)=\bar{\mathcal{X}}$. From the construction,
\begin{equation}\label{A1}
L(\mathcal{X}_1 + t\xi_1) = \operatorname{qf}\!\bigl(L(\mathcal{X}_1 + t\xi_1)\bigr) \, F_1(t),
\end{equation}
where $F_1(t)$ is the $R$-factor of the QR decomposition, with $F_1(0)=I_{r_1}$. Since $\operatorname{qf}$ is a retraction on the Stiefel manifold,
\begin{equation}\label{A2}
\operatorname{qf}\!\bigl(L(\mathcal{X}_1 + t\xi_1)\bigr) = L(\mathcal{X}_1 + t\xi_1) + \mathcal{O}(t^2).
\end{equation}
Substituting \eqref{A2} into \eqref{A1}, differentiating with respect to $t$, and evaluating at $t=0$ gives $F'_1(0)=0$, hence $F_1(t) = I_{r_1} + \mathcal{O}(t^2)$.

Using the unfolding relation (\ref{eq:Ltrans}) and $F_1(t)=I+\mathcal{O}(t^2)$,
\[
\operatorname{qf}\!\bigl(L((\mathcal{X}_2 + t\xi_2) \times_1 F_1(t)^{\!\top})\bigr)
= \operatorname{qf}\!\bigl(L(\mathcal{X}_2 + t\xi_2) + \mathcal{O}(t^2)\bigr)
= L(\mathcal{X}_2 + t\xi_2) + \mathcal{O}(t^2).
\]
Proceeding recursively yields, for $k=2,\dots,d-1$,
\[
L^{-1}\!\Bigl(\operatorname{qf}\!\bigl(L((\mathcal{X}_k + t\xi_k) \times_1 F_{k-1}(t)^{\!\top})\bigr)\Bigr) = \mathcal{X}_k + t\xi_k + \mathcal{O}(t^2),
\]
and similarly $(\mathcal{X}_d + t\xi_d) \times_1 F_{d-1}(t)^{\!\top} = \mathcal{X}_d + t\xi_d + \mathcal{O}(t^2)$. Thus $\overline{R}^3_{\bar{\mathcal{X}}}(t\overline{\xi}) = \bar{\mathcal{X}} + t\overline{\xi} + \mathcal{O}(t^2)$, giving $D\overline{R}^3_{\bar{\mathcal{X}}}(0)[\overline{\xi}] = \overline{\xi}$.
\end{proof}

All three retractions share the same leading-order computational cost of $\mathcal{O}(d n r^3)$. Because $\overline{R}^2$ and $\overline{R}^3$ are derived from a tensor-preserving orthogonalization procedure, they automatically satisfy \eqref{eq:tensor-preserving} and thus induce well-defined retractions on the quotient manifold.

\begin{proposition}\label{3-Re}
Define $R^i_{[\bar{\mathcal{X}}]}(\xi) := \pi(\overline{R}^i_{\bar{\mathcal{X}}}(\overline{\xi}_{\bar{\mathcal{X}}}))$ for $i=2,3$. Then $R^2$ and $R^3$ are retractions on $\mathcal{M}_{\rr}$.
\end{proposition}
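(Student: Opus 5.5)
The plan is to invoke the quotient-retraction criterion \cite[Proposition 4.1.3]{absil2008optimization} recalled above. Since Proposition~\ref{3-overRe} shows that $\overline{R}^2$ and $\overline{R}^3$ are retractions on $\overline{\mathcal{M}}_{\rr}$, it remains to check the compatibility condition~\eqref{eq:ret-compat}. Once that holds, $R^i_{[\bar{\mathcal{X}}]}(\xi)$ is independent of the representative, and the two retraction axioms descend through $\pi$.

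The compatibility condition will follow from the tensor-preserving identity~\eqref{eq:tensor-preserving}, by the argument given just before the recursive constructions. So the first key step is to verify $\Phi(\overline{R}^i_{\bar{\mathcal{X}}}(\overline{\xi}))=\Phi(\bar{\mathcal{X}}+\overline{\xi})$ for $i=2,3$. For $\overline{R}^3$, write $\mathcal{Z}_k=\mathcal{X}_k+\xi_k$ and let $\widetilde{\mathcal{Z}}_k$ denote the $k$th output core. By construction,
\[
L(\mathcal{Z}_1)=\operatorname{qf}(L(\mathcal{Z}_1))F_1, \qquad
L(\mathcal{Z}_k\times_1F_{k-1}^{\!\top})=\operatorname{qf}(\cdot)F_k .
\]
Using~\eqref{eq:Ltrans}, these say $\mathcal{Z}_1=\widetilde{\mathcal{Z}}_1\times_3F_1^{\!\top}$ and $\mathcal{Z}_k\times_1F_{k-1}^{\!\top}=\widetilde{\mathcal{Z}}_k\times_3F_k^{\!\top}$. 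In slice form this reads $\mathcal{Z}_1(i_1)=\widetilde{\mathcal{Z}}_1(i_1)F_1$ and $F_{k-1}\mathcal{Z}_k(i_k)=\widetilde{\mathcal{Z}}_k(i_k)F_k$.

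An induction on $k$ then shows $\mathcal{Z}_1(i_1)\cdots\mathcal{Z}_k(i_k)=\widetilde{\mathcal{Z}}_1(i_1)\cdots\widetilde{\mathcal{Z}}_k(i_k)F_k$. At $k=d$ the last core absorbs $F_{d-1}$, which gives equality of the full tensors. The polar case is the same telescoping argument with $F_k$ replaced by $D_k$. Here I must check the transposition conventions: the recursion is written with $\times_1D_1$ in one place and $\times_1D_{k-1}^{\!\top}$ elsewhere, but since $D_k$ is symmetric these agree.

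Next I combine this with Lemma~\ref{hlift}. If $\bar{\mathcal{Y}}=\varphi_Q(\bar{\mathcal{X}})$, then $\overline{\xi}_{\bar{\mathcal{Y}}}=\varphi_Q(\overline{\xi}_{\bar{\mathcal{X}}})$. Because $\varphi_Q$ is linear, $\bar{\mathcal{Y}}+\overline{\xi}_{\bar{\mathcal{Y}}}=\varphi_Q(\bar{\mathcal{X}}+\overline{\xi}_{\bar{\mathcal{X}}})$. Since $\Phi$ is gauge invariant as a multilinear product (the orthogonal factors cancel), the two retracted points have equal $\Phi$-images. Both images lie in $\overline{\mathcal{M}}_{\rr}$, so by Proposition~\ref{tt-uniq} they lie in the same equivalence class, which is exactly~\eqref{eq:ret-compat}.

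The main obstacle is well-definedness and membership in $\overline{\mathcal{M}}_{\rr}$. The QR and polar factors $F_k$ and $D_k$ must be invertible, and the retracted cores must have full-rank unfoldings; otherwise Proposition~\ref{tt-uniq} cannot be applied. I would restrict to the neighborhood of the zero section where each $L(\mathcal{Z}_k\times_1F_{k-1}^{\!\top})$ has full column rank, which is open and contains the zero section by continuity since $F_k(0)=I$. Full rank of the right unfoldings of the new cores then follows from the invertibility of $F_k$ and openness of $\mathbb{R}_*$. I would state that the retractions are defined on this neighborhood, as permitted in \cite{absil2008optimization}.

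Smoothness of $R^i$ and the two axioms then follow. The local-section argument of Proposition 4.1.3 gives smoothness. The axioms come from $D\pi(\bar{\mathcal{X}})[\overline{\xi}_{\bar{\mathcal{X}}}]=\xi$ together with Proposition~\ref{3-overRe}.
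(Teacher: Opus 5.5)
Your strategy is the paper's: derive \eqref{eq:ret-compat} from the tensor-preserving identity \eqref{eq:tensor-preserving}, using Lemma~\ref{hlift}, linearity of $\varphi_Q$ and gauge invariance of $\Phi$. The paper takes \eqref{eq:tensor-preserving} for granted (``derived from a tensor-preserving orthogonalization procedure''). You try to verify it, and that verification breaks for $\overline{R}^3$.

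With the paper's mode product, $(\mathcal{A}\times_1P)(i)=P\,\mathcal{A}(i)$, which is what \eqref{eq:Ltrans} encodes. So $\mathcal{Z}_k\times_1F_{k-1}^{\!\top}=\widetilde{\mathcal{Z}}_k\times_3F_k^{\!\top}$ reads in slice form as $F_{k-1}^{\!\top}\mathcal{Z}_k(i_k)=\widetilde{\mathcal{Z}}_k(i_k)F_k$, not $F_{k-1}\mathcal{Z}_k(i_k)=\widetilde{\mathcal{Z}}_k(i_k)F_k$. Your induction then fails at the first step, since the new slices satisfy
\[
\widetilde{\mathcal{Z}}_1(i_1)\,\widetilde{\mathcal{Z}}_2(i_2)=\mathcal{Z}_1(i_1)\,F_1^{-1}F_1^{\!\top}\,\mathcal{Z}_2(i_2)\,F_2^{-1}.
\]
The $R$-factor $F_1$ is upper triangular rather than symmetric; for instance $F_1^{\!\top}F_1=I+t^2L(\xi_1)^{\!\top}L(\xi_1)$ generically has a non-diagonal Cholesky factor. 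Hence $F_1^{-1}F_1^{\!\top}\neq I$ in general. So $\overline{R}^3$ as literally written is not tensor-preserving, and the compatibility argument has nothing to rest on. It is still a retraction on the total space: $F_k=I+\mathcal{O}(t^2)$ makes the transpose invisible to first order, which is why Proposition~\ref{3-overRe} does not detect the problem.

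You flagged this transposition issue for $\overline{R}^2$ and correctly dismissed it there because $D_k$ is symmetric; the QR case is exactly where that check matters. The repair is to read the QR recursion as the standard left-orthogonalization sweep, absorbing the triangular factor via $\times_1F_{k-1}$. This applies both when forming the cores and $F_k$, and in the last core $(\mathcal{X}_d+\xi_d)\times_1F_{d-1}$.

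With that definition your slice identity is correct, the telescoping gives $\Phi(\overline{R}^3_{\bar{\mathcal{X}}}(\overline{\xi}))=\Phi(\bar{\mathcal{X}}+\overline{\xi})$, and the rest of your argument goes through. Your extra care about invertibility of $F_k$, full rank of the new cores near the zero section, and landing in $\overline{\mathcal{M}}_{\rr}$ fills in points the paper leaves implicit. But state the correction to the definition explicitly rather than silently switching conventions, since the claim for $R^3$ depends on it.
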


\begin{proof}
We show the result for $R^3$; the case of $R^2$ is analogous. Let $\bar{\mathcal{X}},\bar{\mathcal{Y}}\in\pi^{-1}([\bar{\mathcal{X}}])$ be two representatives of the same equivalence class, with $\bar{\mathcal{Y}}=\varphi_Q(\bar{\mathcal{X}})$. By Lemma~\ref{hlift}, the horizontal lifts satisfy $\overline{\xi}_{\bar{\mathcal{Y}}} = \varphi_Q(\overline{\xi}_{\bar{\mathcal{X}}})$. From the tensor-preserving property \eqref{eq:tensor-preserving} of $\overline{R}^3$, we have
\begin{align*}
\Phi\bigl(\overline{R}^3_{\bar{\mathcal{Y}}}(\overline{\xi}_{\bar{\mathcal{Y}}})\bigr)
&= \Phi\bigl(\bar{\mathcal{Y}} + \overline{\xi}_{\bar{\mathcal{Y}}}\bigr)
= \Phi\bigl(\varphi_Q(\bar{\mathcal{X}}) + \varphi_Q(\overline{\xi}_{\bar{\mathcal{X}}})\bigr) \\
&= \Phi\bigl(\varphi_Q(\bar{\mathcal{X}} + \overline{\xi}_{\bar{\mathcal{X}}})\bigr)
= \Phi\bigl(\bar{\mathcal{X}} + \overline{\xi}_{\bar{\mathcal{X}}}\bigr)
= \Phi\bigl(\overline{R}^3_{\bar{\mathcal{X}}}(\overline{\xi}_{\bar{\mathcal{X}}})\bigr).
\end{align*}
Hence $\pi\bigl(\overline{R}^3_{\bar{\mathcal{Y}}}(\overline{\xi}_{\bar{\mathcal{Y}}})\bigr) = \pi\bigl(\overline{R}^3_{\bar{\mathcal{X}}}(\overline{\xi}_{\bar{\mathcal{X}}})\bigr)$, and the value of $R^3_{[\bar{\mathcal{X}}]}(\xi_{[\bar{\mathcal{X}}]})$ is independent of the chosen representative. Thus $R^3$ is a well-defined retraction on $\mathcal{M}_{\rr}$.
\end{proof}

While developed for the TT format, this strategy of adapting representation-preserving transformations into retractions is not intrinsically limited to the TT case. It suggests a potential route for designing quotient-compatible retractions in other structured low-rank models, such as Tucker decomposition or matrix factorization, where analogous procedures that preserve the full tensor or matrix are available.

\subsection{Vector Transport}

On a general manifold \(\mathcal{M}\), a vector transport is a smooth map
\[
\mathcal{T}: T\mathcal{M}\oplus T\mathcal{M} \to T\mathcal{M},\quad (\eta_x,\xi_x) \mapsto \mathcal{T}_{\eta_x}\xi_x,
\]
that satisfies certain consistency conditions (see \cite[Section 8.1.4]{absil2008optimization}). Given a retraction \(\overline{R}\) on the total space, a natural vector transport on the quotient manifold \(\mathcal{M}_{\rr}\) can be constructed as follows. Let \(\bar{\mathcal{Y}} = \overline{R}_{\bar{\mathcal{X}}}(\overline{\eta}_{\bar{\mathcal{X}}})\). Then define
\[
\overline{\mathcal{T}_{\eta_{[\bar{\mathcal{X}}]}}\xi_{[\bar{\mathcal{X}}]}}_{\bar{\mathcal{Y}}}
:= \phi_{\bar{\mathcal{Y}}}\!\bigl(\psi_{\bar{\mathcal{Y}}}(\overline{\xi}_{\bar{\mathcal{X}}})\bigr),
\]
where \(\phi\) is the horizontal projection given in Theorem~\ref{hproj} and \(\psi\) is the orthogonal projection given in Theorem~\ref{tproj}. This construction yields a well-defined vector transport \(\mathcal{T}\) on \(\mathcal{M}_{\rr}\).

\section{Optimization Algorithms on the Quotient Manifold}
\label{sec:algorithms}

In this section, we present two first-order Riemannian algorithms for solving the tensor completion problem on the quotient manifold: Riemannian gradient descent (RGD)  and Riemannian conjugate gradient (RCG). The step-size strategies used in these algorithms follow standard approaches from the literature, which we briefly recall below.

\subsection{Riemannian Gradient Descent}\label{sec_lorgd}

We denote the proposed algorithm on the left-orthogonal quotient manifold with Riemannian metric $\va$ as LO-RGD($\va$). The method follows the standard RGD update on the total space:
\[
\bar{\mathcal{X}}_{k+1} = \overline{R}_{\bar{\mathcal{X}}_k}(\alpha_k \overline{\xi}_k),
\]
where $\overline{\xi}_k = -\operatorname{grad}\overline{h}(\bar{\mathcal{X}}_k)$ is the negative Riemannian gradient, $\alpha_k>0$ is the step size, and $\overline{R}$ is one of the retractions defined in Section~\ref{sec:retraction}. 

For the step size $\alpha_k$, we consider three standard strategies: Armijo-quad obtains an initial step by minimizing a quadratic model of $\overline{h}$ along the search direction, followed by Armijo backtracking~\cite{cai2022tensor,kasai2016low,kressner2014low}; Armijo-NW guesses an initial step from the function decrease in the previous iteration, followed by the same Armijo backtracking~\cite{nocedal2006numerical,boumal2014manopt}; BB uses the Riemannian Barzilai-Borwein step size~\cite{iannazzo2018riemannian}, combined with a nonmonotone line search from Manopt~\cite{boumal2014manopt}.
The complete procedure is listed in Algorithm~\ref{alg_rgd}.

\begin{algorithm}[htbp]
\caption{Riemannian Gradient Descent (LO-RGD($\va$))}
\label{alg_rgd}
\begin{algorithmic}[1]
\State \textbf{Input:} Initial iterate $\bar{\mathcal{X}}_0\in\overline{\mathcal{M}}_{\rr}$; maximum iterations $k_{\max}$; a retraction $\overline{R}$ chosen from $\overline{R}^1,\overline{R}^2,\overline{R}^3$; a metric parameter $\va$; a step size strategy from $\{\text{Armijo-quad},\text{Armijo-NW},\text{BB}\}$.
\State \textbf{Output:} Sequence of iterates $\{\bar{\mathcal{X}}_k\}$.
\For{$k=0,1,\dots,k_{\max}$}
    \State $\overline{\xi}_k \gets -\operatorname{grad}\overline{h}(\bar{\mathcal{X}}_k)$
    \State Compute the step size $\alpha_k$ using the chosen strategy
    \State $\bar{\mathcal{X}}_{k+1} \gets \overline{R}_{\bar{\mathcal{X}}_k}(\alpha_k\overline{\xi}_k)$
\EndFor
\end{algorithmic}
\end{algorithm}

For the full metric, computing the Riemannian gradient requires $\mathcal{O}(d|\Omega| r^2)$ operations for the sparse Euclidean gradient evaluation using~\cite[Algorithm~2]{steinlechner2016riemannian}, $\mathcal{O}(d n r^3)$ for forming $R^{k+1}$ and the subsequent multiplications, and $\mathcal{O}(d r^3)$ for the inversions. The tangent projection adds another $\mathcal{O}(d n r^3 + d r^3)$. Thus the total per-iteration cost for the Riemannian gradient is $\mathcal{O}(d|\Omega| r^2+d n r^3)$. The costs of the retraction depend on the specific choice, as analyzed in Section~\ref{sec:retraction}; the step size strategies contribute a lower-order cost that depends on the number of backtracking trials.

\subsection{Riemannian Conjugate Gradient}\label{sec_lorcg}

The proposed algorithm on the left-orthogonal quotient manifold with Riemannian metric $\va$, denoted LO-RCG($\va$), applies the RCG method. It uses a search direction that combines the current negative gradient with a transported previous direction~\cite{sato2015new}:
\[
\overline{\xi}_k = -\operatorname{grad}\overline{h}(\bar{\mathcal{X}}_k) + \beta_k\,\phi_{\bar{\mathcal{X}}_k}\!\bigl(\psi_{\bar{\mathcal{X}}_k}(\overline{\xi}_{k-1})\bigr),
\]
where $\beta_k$ is chosen according to the Hestenes-Stiefel formula
\[
\beta_k = \max\!\Bigl\{0,\;
\frac{\overline{g}^{\va}_{\bar{\mathcal{X}}_k}\!\bigl(\operatorname{grad}\overline{h}(\bar{\mathcal{X}}_k)-\phi_{\bar{\mathcal{X}}_k}(\psi_{\bar{\mathcal{X}}_k}(\operatorname{grad}\overline{h}(\bar{\mathcal{X}}_{k-1}))),\;
\operatorname{grad}\overline{h}(\bar{\mathcal{X}}_k)\bigr)}
{\overline{g}^{\va}_{\bar{\mathcal{X}}_k}\!\bigl(\operatorname{grad}\overline{h}(\bar{\mathcal{X}}_k)-\phi_{\bar{\mathcal{X}}_k}(\psi_{\bar{\mathcal{X}}_k}(\operatorname{grad}\overline{h}(\bar{\mathcal{X}}_{k-1}))),\;
\phi_{\bar{\mathcal{X}}_k}(\psi_{\bar{\mathcal{X}}_k}(\overline{\xi}_{k-1}))\bigr)}
\Bigr\}.
\]
The update is again $\bar{\mathcal{X}}_{k+1} = \overline{R}_{\bar{\mathcal{X}}_k}(\alpha_k\overline{\xi}_k)$. For the step size, we employ the Armijo-quad and Armijo-NW strategies described in Section~\ref{sec_lorgd}. The complete algorithm is outlined in Algorithm~\ref{alg_rcg}.

\begin{algorithm}[htbp]
\caption{Riemannian Conjugate Gradient (LO-RCG($\va$))}
\label{alg_rcg}
\begin{algorithmic}[1]
\State \textbf{Input:} Initial iterate $\bar{\mathcal{X}}_0\in\overline{\mathcal{M}}_{\rr}$; maximum iterations $k_{\max}$; a retraction $\overline{R}$ chosen from $\overline{R}^1,\overline{R}^2,\overline{R}^3$; a metric parameter $\va$; a step size strategy from $\{\text{Armijo-quad},\text{Armijo-NW}\}$.
\State \textbf{Output:} Sequence of iterates $\{\bar{\mathcal{X}}_k\}$.
\State $\overline{\xi}_0 \gets -\operatorname{grad}\overline{h}(\bar{\mathcal{X}}_0)$
\For{$k=0,1,\dots,k_{\max}$}
    \State Compute the step size $\alpha_k$ using the chosen strategy
    \State $\bar{\mathcal{X}}_{k+1} \gets \overline{R}_{\bar{\mathcal{X}}_k}(\alpha_k\overline{\xi}_k)$
    \State Compute $\beta_{k+1}$ using the Hestenes-Stiefel formula and set
    \State $\overline{\xi}_{k+1} \gets -\operatorname{grad}\overline{h}(\bar{\mathcal{X}}_{k+1}) + \beta_{k+1}\,\phi_{\bar{\mathcal{X}}_{k+1}}\!\bigl(\psi_{\bar{\mathcal{X}}_{k+1}}(\overline{\xi}_k)\bigr)$
\EndFor
\end{algorithmic}
\end{algorithm}

For the full metric, the main computational cost of LO-RCG($\va$) lies in computing the Riemannian gradient (same as for LO-RGD($\va$)) and the horizontal projection. The horizontal projection requires constructing the coefficient matrix, which costs $\mathcal{O}(d n r^5)$ operations, and solving the linear system~\eqref{eq:hmetrix} directly costs $\mathcal{O}(d^3 r^6)$ operations; exploiting the block-tridiagonal structure of the coefficient matrix can further reduce the solution cost. The costs of the retraction and the step size strategy are as discussed in Section~\ref{sec:retraction} and Section~\ref{sec_lorgd}, respectively.

\begin{remark}
For the convergence analysis, we lift the original problem to the quotient manifold $\mathcal{M}_{\rr}$. Thanks to our constructions, the relevant geometric objects on $\mathcal{M}_{\rr}$ (e.g., the Riemannian metric, the retraction, etc.) are well-defined and satisfy the standard assumptions of Riemannian optimization theory (first-order retraction, smoothness of the metric, and suitable step size strategies). Consequently, $\mathcal{M}_{\rr}$, as a Riemannian manifold, can directly invoke the standard convergence results \cite{absil2008optimization,iannazzo2018riemannian,ring2012optimization,sato2015new}.
\end{remark}

\section{Numerical Experiments}
\label{sec:experiments}
To quantify reconstruction accuracy and efficiency, we use the relative error \(\mathrm{RE}= \|\mathcal{P}_\Omega(\mathcal{X}) - \mathcal{P}_\Omega(\Gamma)\|_F / \|\mathcal{P}_\Omega(\Gamma)\|_F\) and the sampling ratio \(\mathrm{SR}= |\Omega|/(n_1 \times \cdots \times n_d)\), where \(\Gamma\) is the target tensor with dimensions \(n_1,\dots,n_d\).

All algorithms are implemented within the Manopt toolbox~\cite{boumal2014manopt}, which calls the MATLAB Tensor Toolbox~\cite{kolda2017tensor} for core tensor operations. All experiments are conducted in MATLAB 2025a on an Apple Mac computer equipped with an Apple M4 Pro chip and 24 GB of RAM.

The experimental evaluation is organized as follows. In Section~\ref{sec:components}, we analyze the influence of the Riemannian metric and retraction map within the proposed algorithmic framework. Section~\ref{sec_condition_number} compares our algorithms with existing TT-based geometric methods under challenging conditions of low oversampling and high condition numbers.

\subsection{Algorithmic Framework}
\label{sec:components}
In this section, we analyze the influence of the Riemannian metric and the retraction map on the proposed algorithms. Throughout this section, we use LO-RGD($\va$) and LO-RCG($\va$) to denote the algorithms with metric $\va$, while the specific choice of retraction is indicated in each subsection.

\subsection*{Performance under Different Riemannian Metrics}
\addcontentsline{toc}{subsection}{Performance under Different Riemannian Metrics}
\label{sec_metric}

The Riemannian metric plays a fundamental role in determining the performance of optimization algorithms on manifolds. In this part, we investigate how different choices of the metric affect the convergence behavior of the proposed methods, while keeping the step size and retraction fixed. Specifically, we employ the Armijo-NW strategy described in Section~\ref{sec_lorgd} and the retraction $\overline{R}^1$ throughout. For reference, we also include the quotient-geometry algorithms RGD(Q) and RCG(Q) from~\cite{cai2022tensor}, whose quotient manifold construction is reviewed in Section~2, and which employ the full metric by construction.

For third-order tensors, we construct a family of six admissible Riemannian metrics on the left-orthogonal quotient manifold, denoted by $\va_1,\dots,\va_6$ (see Section~\ref{sec:metric_family} for their definitions). Here $\va_1$ corresponds to the Euclidean metric and $\va_6$ to the full metric. We compare the performance of LO-RGD($\va_i$) and LO-RCG($\va_i$) under each of these six metrics, alongside RGD(Q) and RCG(Q). The sampling ratio is fixed at $\mathrm{SR}=0.4$ for all tests. The maximum number of iterations is set to $100$, and the optimization is terminated early when the norm of the Riemannian gradient falls below $10^{-6}$.

To investigate how the sensitivity to the metric depends on the core tensor dimensions, we consider two configurations capturing different patterns of dimensional imbalance: one with a small third core (tensor size $[100,100,5]$, TT-rank $[1,10,3,1]$), and one with both the second and third cores small (tensor size $[500,10,10]$, TT-rank $[1,3,3,1]$).

For each setting, the target tensor is generated using the following generic procedure (denoted as the ``generic generation'' method and also used in later sections). First, TT core tensors of appropriate sizes are sampled randomly. For numerical stability, the left-unfolding matrices of the first $d-1$ cores are then orthogonalized via QR factorization. The target tensor is finally formed by taking the TT product of the cores. Initial iterates are constructed in the same manner.
\label{sec:generic_generation}

The convergence behavior under the six candidate metrics, together with RGD(Q) and RCG(Q), is compared in Figures~\ref{fig:metrics_small3} and~\ref{fig:metrics_small23}. Figure~\ref{fig:metrics_small3} corresponds to the small-third-core configuration, while Figure~\ref{fig:metrics_small23} corresponds to the case where both the second and third cores are small. Each figure contains four subfigures, displaying the relative error $\mathrm{RE}$ against the number of iterations (left column) and against the computational time in seconds (right column), for RGD (top row) and RCG (bottom row). The six metrics $\va_1,\dots,\va_6$ are distinguished by different line styles and colors, while RGD(Q) and RCG(Q) are shown as thicker black curves with distinct markers for clear identification as baselines.

\begin{figure}[htbp]
\centering
\subfigure[RGD: RE vs iteration]{\includegraphics[width=0.45\textwidth]{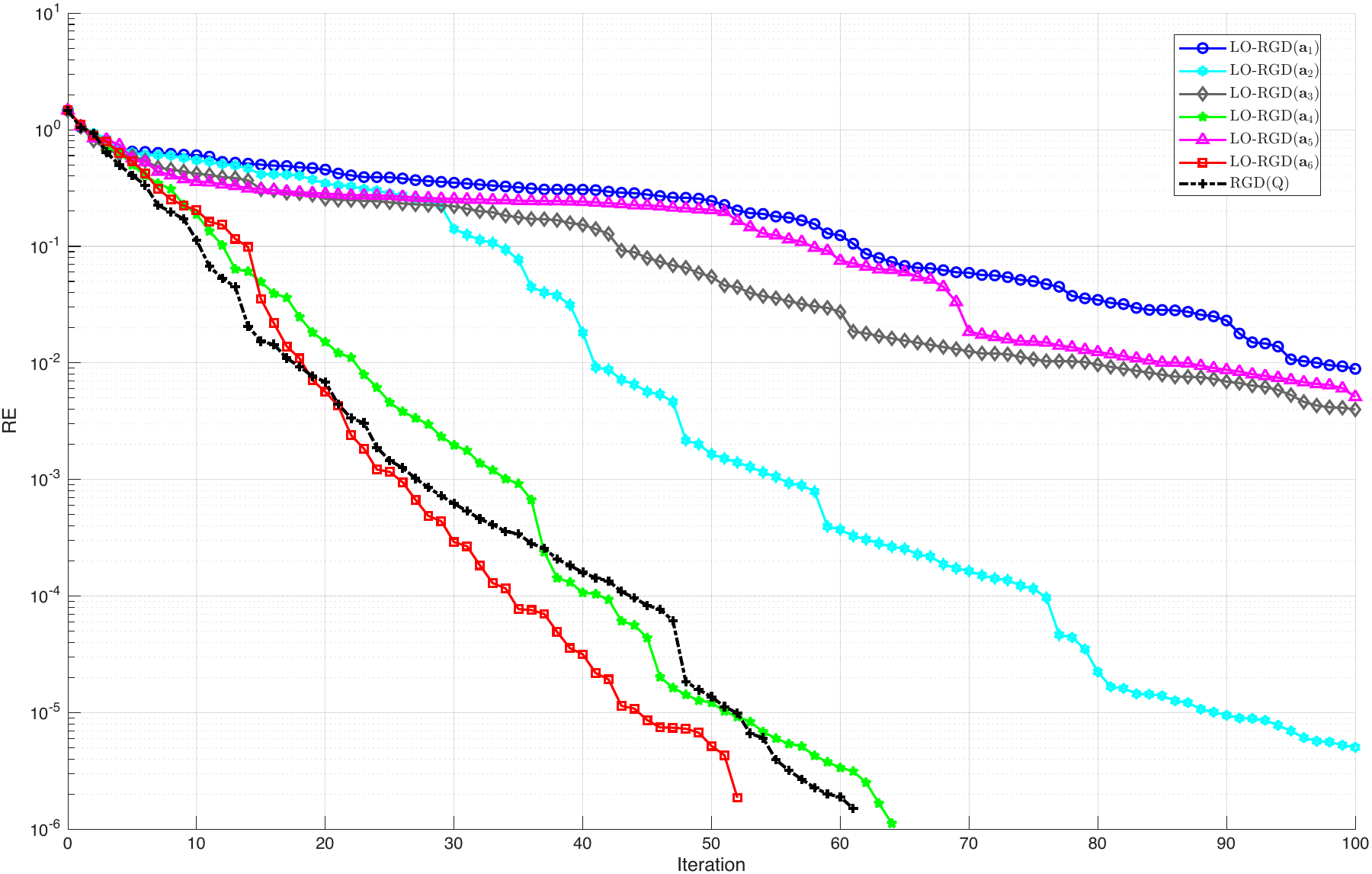}}
\hfill
\subfigure[RGD: RE vs time]{\includegraphics[width=0.45\textwidth]{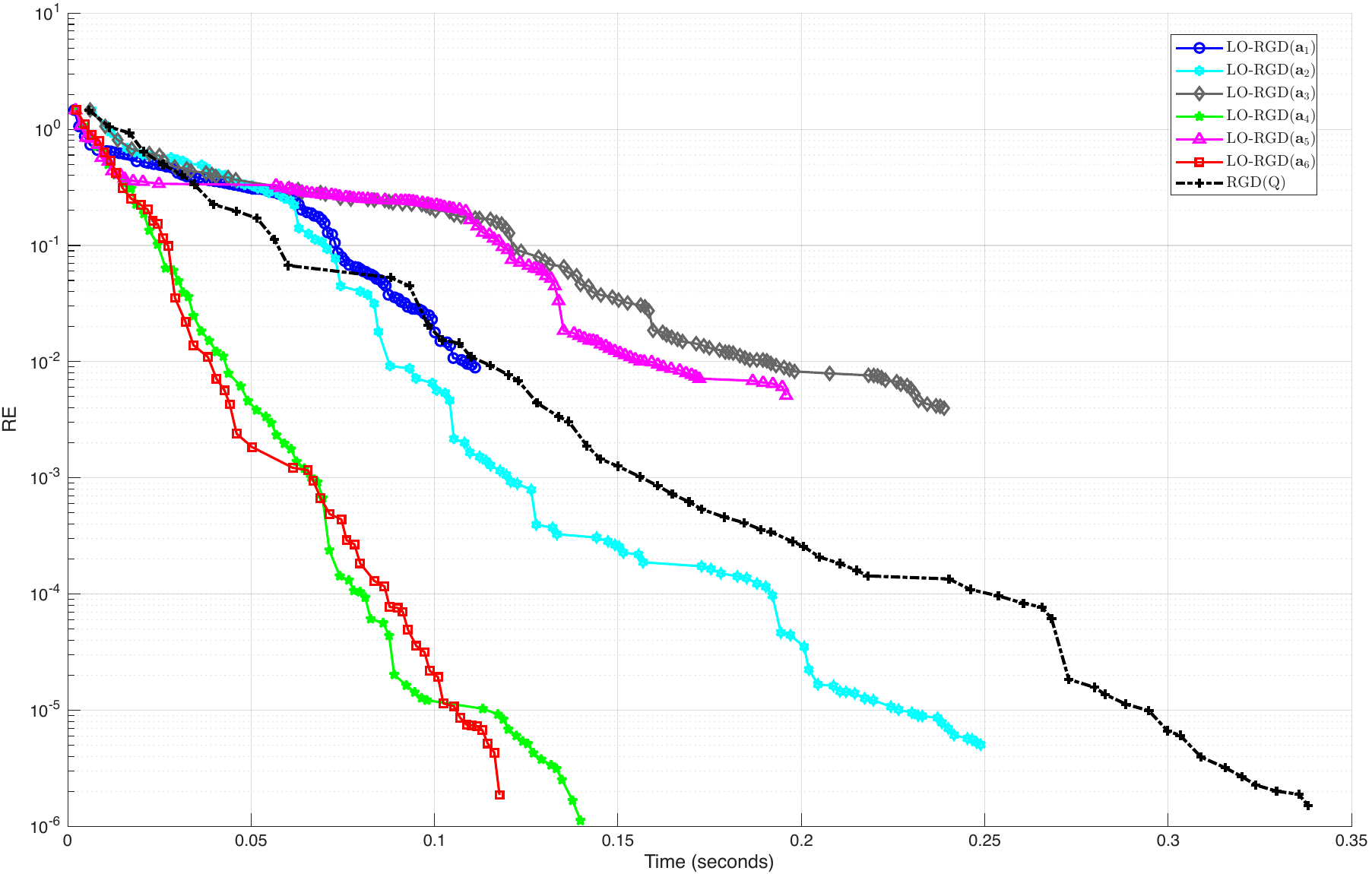}}

\vspace{0.3cm}

\subfigure[RCG: RE vs iteration]{\includegraphics[width=0.45\textwidth]{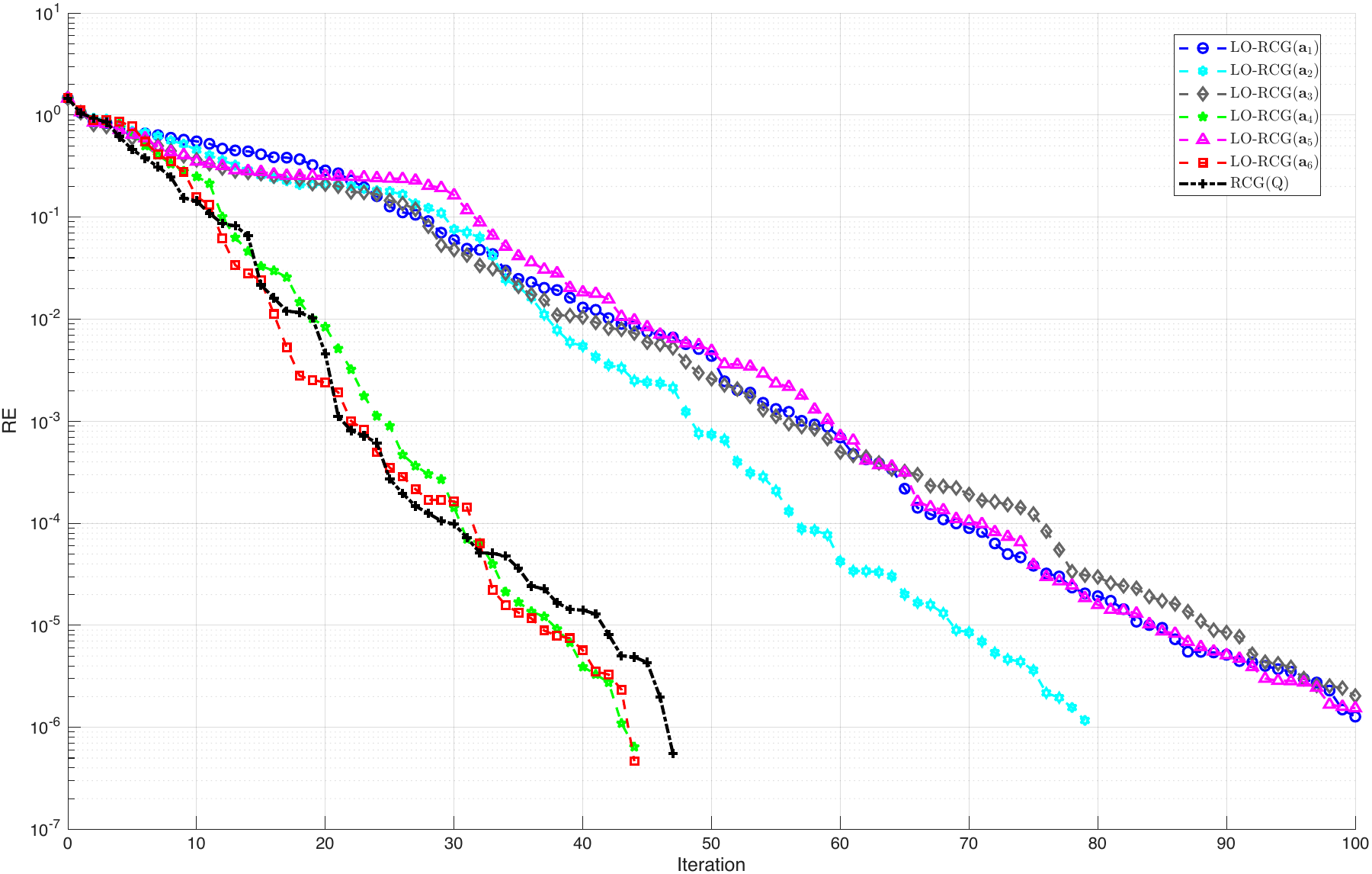}}
\hfill
\subfigure[RCG: RE vs time]{\includegraphics[width=0.45\textwidth]{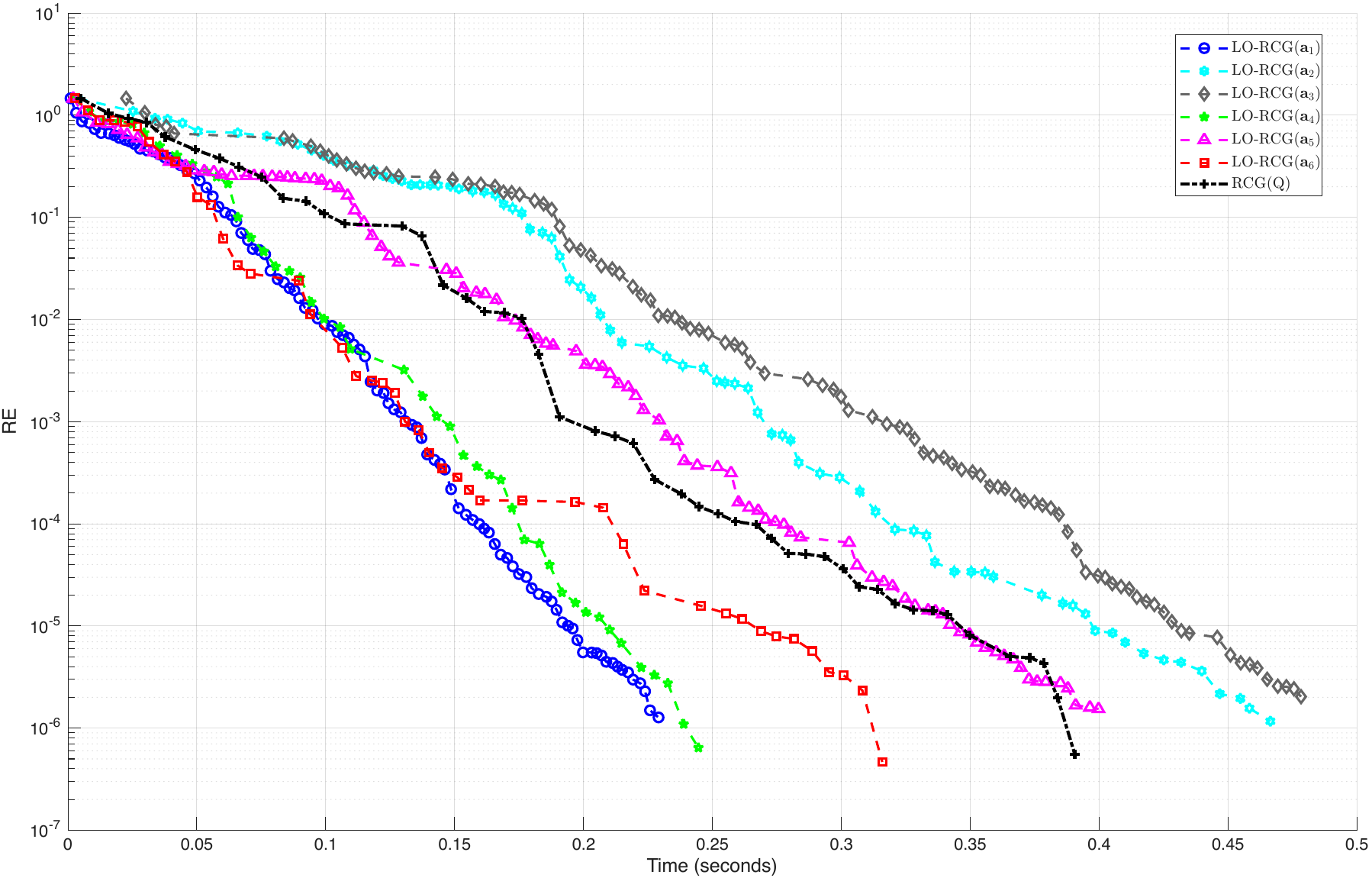}}
\caption{Convergence behavior under six Riemannian metrics for the small-third-core configuration (size $[100,100,5]$, TT-rank $[1,10,3,1]$). The full metric $\va_6$ decays fastest per iteration but its per-step cost is highest; simpler metrics become more efficient in time.}
\label{fig:metrics_small3}
\end{figure}

\begin{figure}[htbp]
\centering
\subfigure[RGD: RE vs iteration]{\includegraphics[width=0.45\textwidth]{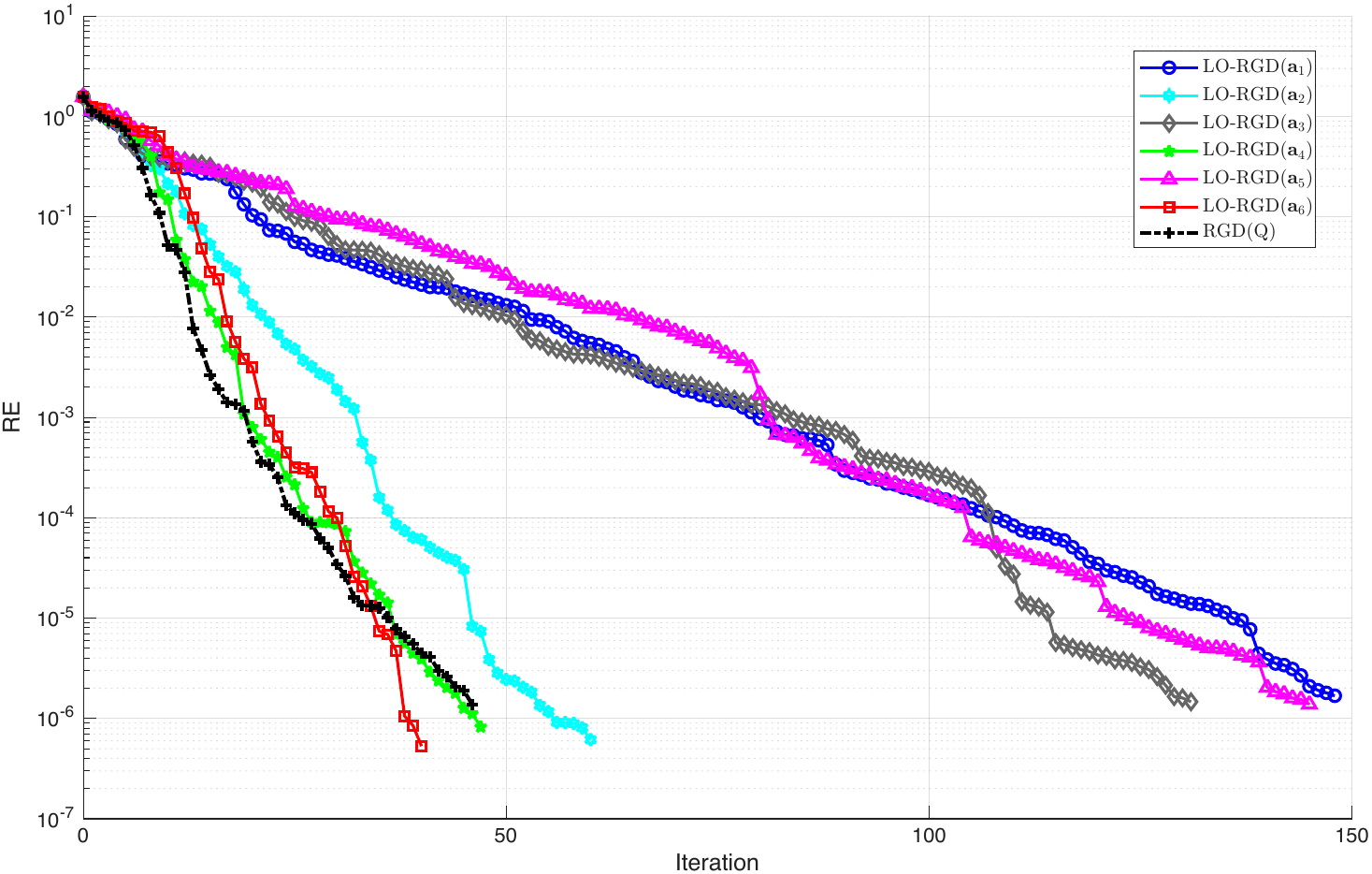}}
\hfill
\subfigure[RGD: RE vs time]{\includegraphics[width=0.45\textwidth]{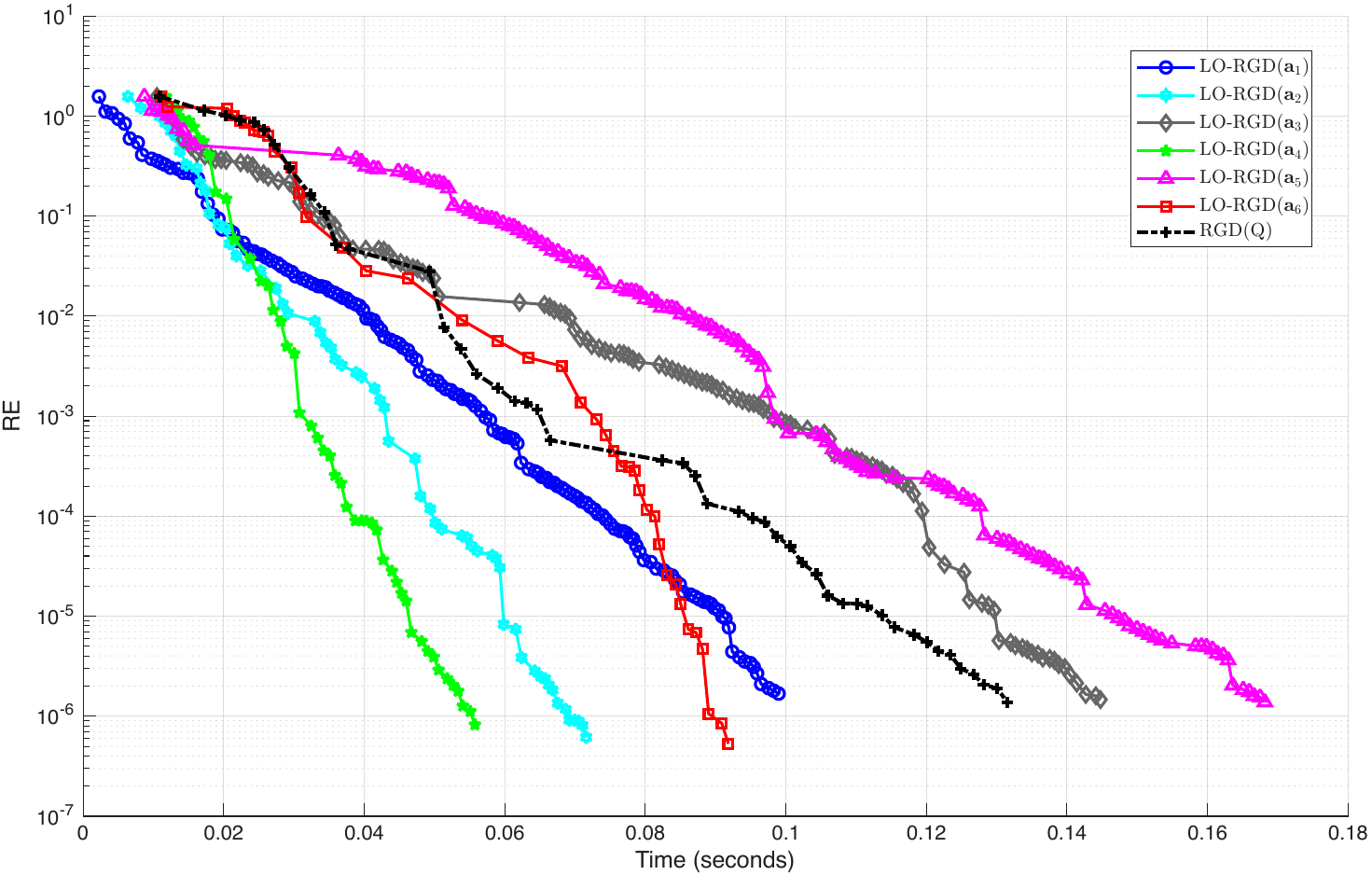}}

\vspace{0.3cm}

\subfigure[RCG: RE vs iteration]{\includegraphics[width=0.45\textwidth]{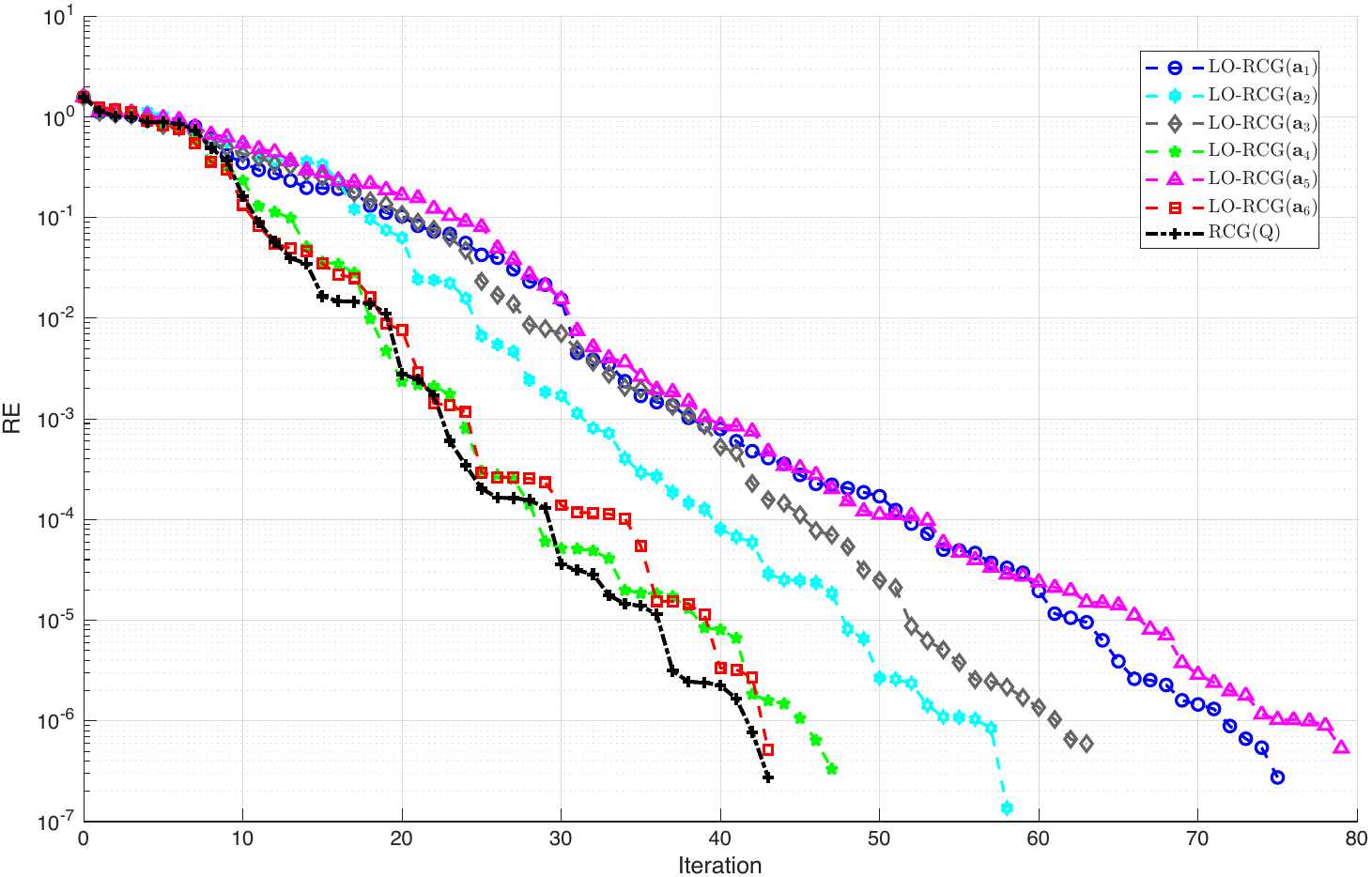}}
\hfill
\subfigure[RCG: RE vs time]{\includegraphics[width=0.45\textwidth]{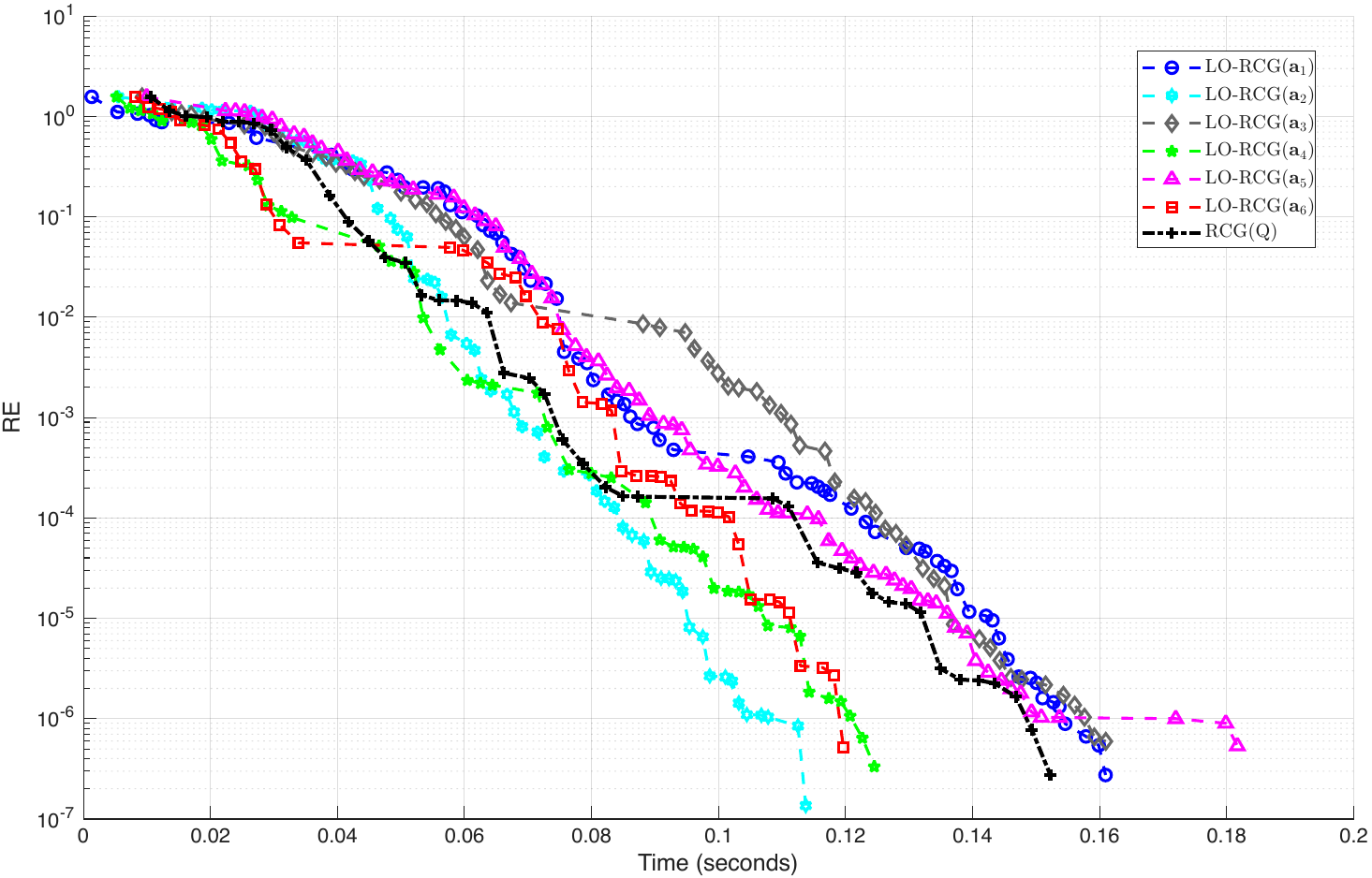}}
\caption{Convergence behavior under six Riemannian metrics for the configuration with small second and third cores (size $[500,10,10]$, TT-rank $[1,3,3,1]$). All metrics perform similarly; the advantage of $\va_6$ diminishes when the cores involved are small.}
\label{fig:metrics_small23}
\end{figure}

From Figures~\ref{fig:metrics_small3} and~\ref{fig:metrics_small23}, we draw the following observations. 
All six proposed Riemannian metrics support convergence on the left-orthogonal quotient manifold. 
Among them, the full metric $\va_6$, which couples all TT cores, consistently achieves the fastest per-iteration decay of the relative error. 
Its RE-versus-iteration curves closely track those of the existing quotient methods RGD(Q) and RCG(Q) from~\cite{cai2022tensor}, confirming that the geometric reduction from $\mathrm{GL}(r_k)$ to $\mathrm{O}(r_k)$ does not introduce computational overhead. 
However, the per-iteration cost of $\va_6$ is noticeably higher than that of the other metrics, and its iteration-wise advantage does not automatically translate into time-wise efficiency. 
When the third core is small (Figure~\ref{fig:metrics_small3}), the time advantage of $\va_6$ nearly vanishes, and simpler metrics such as $\va_4$ become equally or more efficient in wall-clock time. 
When both the second and third cores are small (Figure~\ref{fig:metrics_small23}), the iteration advantage of $\va_6$ also diminishes significantly because the additional geometric information it captures becomes marginal, making metrics with lower computational overhead, such as $\va_2$ and $\va_4$, more efficient choices. 
Hence, the full metric should not be chosen indiscriminately: it is preferable when every TT core is large and iteration count is paramount, whereas other metrics from the proposed family offer a better trade-off when certain cores are small or computational resources are limited.

This family of Riemannian metrics offers the flexibility to choose the most appropriate metric according to the specific tensor structure.

\subsection*{Impact of Retraction Choices}

\addcontentsline{toc}{subsection}{Impact of Retraction Choices}
\label{sec_retraction}
In this part, we compare the effects of different retractions on the convergence performance of the algorithms.
Based on the metric comparison, we fix the Riemannian metric to $\va_{d!}$ for the remainder of this section and all subsequent experiments, as it exhibited robust performance across different tensor configurations. The step size strategy is fixed to Armijo with quadratic initial step (Armijo-quad, see Section~\ref{sec_lorgd}) for all variants in this part. We compare the three retractions by running both LO-RGD and LO-RCG with each of $\overline{R}^1$, $\overline{R}^2$, $\overline{R}^3$, denoting the variants by appending \texttt{-r1}, \texttt{-r2}, \texttt{-r3} to the algorithm names.

The target tensor and initial iterates are generated using the generic procedure described in Section~\ref{sec:generic_generation}. We consider a fourth-order tensor of size $[10,10,10,10]$ with TT-rank $[1,4,4,4,1]$ and a sampling ratio of $0.4$. The maximum number of iterations is set to $100$, and the optimization is terminated early when the Riemannian gradient norm falls below $10^{-6}$. Table~\ref{tab:retraction} reports the average time per iteration (in seconds) and the final relative error achieved by each algorithm.

\begin{table}[htbp]
    \centering
    \caption{Average time per iteration and final relative error for each retraction variant.}
    \label{tab:retraction}
    \begin{tabular}{lcc}
        \hline
        Algorithm & Average time per iteration (s) & Final RE \\
        \hline
        \texttt{LO-RGD($\va_{d!}$)-r1} & $2.600 \times 10^{-3}$ & $8.856 \times 10^{-7}$ \\
        \texttt{LO-RGD($\va_{d!}$)-r2} & $3.228 \times 10^{-3}$ & $9.789 \times 10^{-7}$ \\
        \texttt{LO-RGD($\va_{d!}$)-r3} & $1.606 \times 10^{-3}$ & $9.115 \times 10^{-7}$ \\
        \texttt{LO-RCG($\va_{d!}$)-r1} & $3.407 \times 10^{-3}$ & $6.927 \times 10^{-7}$ \\
        \texttt{LO-RCG($\va_{d!}$)-r2} & $5.045 \times 10^{-3}$ & $4.550 \times 10^{-7}$ \\
        \texttt{LO-RCG($\va_{d!}$)-r3} & $3.962 \times 10^{-3}$ & $7.743 \times 10^{-7}$ \\
        \hline
    \end{tabular}
\end{table}

From the experimental results, we observe that all three retraction maps $\overline{R}^1$, $\overline{R}^2$, and $\overline{R}^3$ successfully lead to convergence, exhibiting similar convergence behavior and computational costs.  These results indicate that the choice of retraction is not a critical factor for the problems considered; all three maps ensure effective convergence with comparable efficiency. Therefore, any of them can be selected in practice. In the following experiments, we uniformly adopt $\overline{R}^1$ as the default retraction.

\subsection{Comparison under Low Oversampling and High Condition Numbers}
\label{sec_condition_number}

In this section, we compare the proposed algorithms with the embedded-geometry methods RGD(E) \cite{cai2022provable,wang2019tensor,steinlechner2016riemannian} and RCG(E) \cite{steinlechner2016riemannian}, as well as the quotient-geometry algorithms RGD(Q) and RCG(Q) from \cite{cai2022tensor}, under varying oversampling ratios and condition numbers. All of the aforementioned algorithms are based on the TT decomposition.

The oversampling (OS) ratio is defined as \( |\Omega| / \dim \mathcal{N}_{\mathbf{r}} \), where \(\dim \mathcal{N}_{\mathbf{r}}\) is the manifold dimension given in Section~2.3. The condition number of a tensor $\mathcal{X}$ is $\kappa(\mathcal{X}) = \sigma_{\max}(\mathcal{X})/\sigma_{\min}(\mathcal{X})$, where $\sigma_{\max}(\mathcal{X})$ and $\sigma_{\min}(\mathcal{X})$ denote the largest and smallest nonzero singular values across all the unfoldings $\mathcal{X}_{\langle 1 \rangle}, \dots, \mathcal{X}_{\langle d-1 \rangle}$ \cite{cai2022provable}; a large condition number indicates an ill-conditioned tensor that poses greater challenges for recovery. These two settings probe different aspects of algorithmic robustness: low oversampling tests the ability to recover from incomplete observations, while high condition numbers assess stability against ill-conditioning.

We set the tensor size to $[100,100,100]$, the TT-rank to $[1,5,5,1]$, and consider $\mathrm{OS} \in \{4,8\}$ and $\kappa \in \{25,50,100\}$. Target tensors with prescribed TT-rank and condition number are generated as in~\cite{cai2022tensor}; we also include target tensors generated by the generic procedure described in Section~\ref{sec:generic_generation}.

For all experiments, we use the step size strategies described in Section~\ref{sec_lorgd} (RGD employs BB, while RCG uses Armijo-NW). The maximum number of iterations is set to $250$, and the optimization terminates when the Riemannian gradient norm falls below $10^{-8}$. The initial iterate for each algorithm is generated using the generic procedure described in Section~\ref{sec:generic_generation}. An algorithm is considered successful if $\|\mathcal{X} - \Gamma\|_F / \|\Gamma\|_F$ of the final iterate is less than $10^{-3}$. Each algorithm is run $100$ times independently, and the success rate is computed. The results are reported in Table~\ref{tab:recovery_rates}, where the row labeled ``Generic'' corresponds to target tensors generated by the generic procedure.

\begin{table}[htbp]
\centering
\caption{Recovery rates under different oversampling ratios and condition numbers.}
\label{tab:recovery_rates}
\setlength{\tabcolsep}{3pt}
\begin{tabular}{cccccccc}
\toprule
\multicolumn{2}{c}{} & \multicolumn{6}{c}{Algorithm} \\
\cmidrule(lr){3-8}
OS & $\kappa$ & RGD(E) & RGD(Q) & LO-RGD($\va_{d!}$) & RCG(E) & RCG(Q) & LO-RCG($\va_{d!}$) \\
\midrule
\multirow{4}{*}{4} & Generic & 1 & 1 & 1 & 0.98 & 1 & 1 \\
 & 25 & 0.01 & 0.85 & 0.86 & 0.01 & 0.74 & 0.81 \\
 & 50 & 0 & 0.83 & 0.81 & 0 & 0.64 & 0.68 \\
 & 100 & 0 & 0.75 & 0.79 & 0 & 0.48 & 0.54 \\
\midrule
\multirow{4}{*}{8} & Generic & 1 & 1 & 1 & 1 & 1 & 1 \\
 & 25 & 0.62 & 1 & 1 & 0.70 & 1 & 1 \\
 & 50 & 0.51 & 0.99 & 1 & 0.65 & 0.99 & 1 \\
 & 100 & 0.19 & 0.99 & 0.98 & 0.33 & 0.98 & 1 \\
\bottomrule
\end{tabular}
\end{table}

From the experimental results, we observe that LO-RGD($\va_{d!}$) and LO-RCG($\va_{d!}$) perform comparably to RGD(Q) and RCG(Q) in most test cases, while the embedded-geometry algorithms perform significantly worse. Under the most challenging setting (OS=4, $\kappa=100$), the embedded-geometry algorithms fail completely (recovery rate $0$), whereas all quotient-geometry algorithms maintain recovery rates above $0.48$, with the proposed algorithms exhibiting a slight advantage over RGD(Q) and RCG(Q) (LO-RGD: $0.79$ vs.\ $0.75$; LO-RCG: $0.54$ vs.\ $0.48$).

\section{Conclusion}
\label{sec:conclusion}

We have developed a Riemannian optimization framework for low-rank tensor completion based on the left-orthogonal TT format. By identifying left-orthogonal TT-cores under the action of the orthogonal group, we constructed a quotient manifold and introduced a family of $d!$ admissible Riemannian metrics, ranging from the Euclidean metric to the full metric used in previous works. This family provides a flexible trade-off between computational cost and geometric information, made possible by combining the left-orthogonal structure with the orthogonal group as the quotient group.

We further proposed a new approach to constructing retractions compatible with the quotient structure, by adapting tensor-preserving transformations of the TT format into retractions, realized via two novel retractions based on recursive polar and QR decompositions. Compared with~\cite{cai2022tensor}, our framework reduces the dimension of the linear system in the horizontal projection from $\sum_{k=1}^{d-1} r_k^2$ to $\sum_{k=1}^{d-1} r_k(r_k-1)/2$. Numerical experiments demonstrate that the proposed algorithms achieve reconstruction accuracy comparable to state-of-the-art TT-based manifold optimization methods. Future work includes extending the proposed metric family and retraction strategy to other tensor formats, and investigating the application of Riemannian stochastic algorithms within this geometric framework.


\bibliographystyle{siamplain}
\bibliography{reference}
\end{document}